\documentclass[10pt]{amsart}
\usepackage{latexsym, amsmath, amssymb}
\usepackage{version}
\usepackage{epsfig,graphics}
\usepackage{color}
\usepackage{amsthm, amsopn, amsfonts}

\newcommand{\newsection}[1]
{\section{#1}\setcounter{theorem}{0} \setcounter{equation}{0}
\par\noindent}

\newtheorem{theorem}{Theorem}

\newtheorem{lemma}[theorem]{Lemma}
\newtheorem{corollary}[theorem]{Corollary}
\newtheorem{proposition}[theorem]{Proposition}
\newtheorem{definition}[theorem]{Definition}

\newcommand{\la}{\langle}
\newcommand{\ra}{\rangle}

\newcommand{\M}{\mathcal M}

\newcommand{\Qsr}{{\mathcal Q_{sr}}}
\newcommand{\R}{{\mathbb R}}
\newcommand{\tv}{{\tilde{t}}}
\newcommand{\rs}{{r^*}}
\newcommand{\tphi}{{\tilde{\phi}}}

\renewcommand{\S}{{\mathbb S}}

\begin{document}

\title
{
Price's law  on nonstationary spacetimes
}

\author{Jason Metcalfe}
\address{Department of Mathematics, University of North Carolina,
  Chapel Hill, NC  27599-3250}
\email{metcalfe@email.unc.edu}
\author{Daniel Tataru}
\address{Department of Mathematics, University of California,
  Berkeley, CA  94720}
\email{tataru@math.berkeley.edu}
\author{Mihai Tohaneanu}
\address{Department of Mathematics, Purdue University, West Lafayette,
  IN  47907-1395}
\email{mtohanea@math.purdue.edu}

\thanks{The first author was partially supported by NSF grant DMS0800678.
  The second author was partially supported by NSF grant DMS0354539
  and by the Miller Foundation.}

\begin{abstract}
  In this article we study the pointwise decay properties of solutions
  to the wave equation on a class of nonstationary asymptotically flat
  backgrounds in three space dimensions.  Under the assumption that
  uniform energy bounds and a weak form of local energy decay hold
  forward in time we establish a $t^{-3}$ local uniform decay rate
  (Price's law~\cite{MR0376103}) for linear waves.  As a corollary, we
  also prove Price's law for certain small perturbations of the Kerr
  metric.

  This result was previously established by the second author in
  \cite{Tat} on stationary backgrounds.  The present work was motivated
by the problem of nonlinear stability of the Kerr/Schwarzschild
solutions for the vacuum Einstein equations, which seems to require 
a more robust approach to proving linear decay estimates.
 \end{abstract}

\maketitle

\newsection{Introduction}

In this article we consider the question of pointwise decay for
solutions to the wave equation on certain asymptotically flat
backgrounds. Our interest in this problem originates in general
relativity, more precisely the wave equation on Schwarzs\-child and Kerr
backgrounds. There the expected local decay, heuristically derived by
Price~\cite{MR0376103} in the Schwarzschild case, is $t^{-3}$.  This
conjecture was considered independently in two recent articles
\cite{DSS2} and \cite{Tat}.

The work in \cite{DSS2} is devoted to the Schwarzschild space-time,
where separation of variables can be used; in that context, very
precise sharp local decay bounds are established for each of the
spherical modes. Precisely, it is shown that the $k$-th mode leads to
$t^{-3-2k}$ local decay.

The work in \cite{Tat}, on the other hand, applies to a large class of
stationary asymptotically flat space-times, and asserts that if local
energy decay holds then Price's law holds. Further, sharp pointwise
decay rates are established in the full forward light cone; these have
the form $t^{-1}\la t-r\ra^{-2}$. Local energy decay (described later
in the paper) is known to hold in Schwarzschild and Kerr space-times.

Both of the above results involve taking the Fourier transform in
time and hence rely heavily on the stationarity assumption. The aim of this article
is to prove the same result as in \cite{Tat}, namely that local energy decay 
implies Price's law, but without the stationarity  assumption. 
The proof below is more robust than the one in \cite{Tat},
and improves on the classical  vector field method. As an application,
in the last section of the paper we prove that local energy decay 
(and thus Price's law) holds for a class of nonstationary perturbations 
of the Kerr space-time.

Just as in \cite{Tat}, this work is based on the idea that the local
energy estimates contain all the important local information
concerning the flow, and that only leaves the analysis near spatial
infinity to be understood.  In the context of asymptotically flat
metrics, this idea originates in earlier work \cite{gS} and \cite{MT}
where it is proved that local energy decay implies Strichartz
estimates in the asymptotically flat setting, first for the
Schr\"odinger equation and then for the wave equation.  The same
principle was exploited in \cite{MMTT} and \cite{Mihai} to prove
Strichartz estimates for the wave equation on the Schwarzschild and
then on the Kerr space-time.

\subsection{Notations and the regularity of the metric}
\label{ss:cases}
We use $(t=x_0, x)$ for the coordinates in $\R^{1+3}$. We use Latin
indices $i,j=1,2,3$ for spatial summation and Greek indices
$\alpha,\beta=0,1,2,3$ for space-time summation. In $\R^3$ we also use
polar coordinates $x = r \omega$ with $\omega \in \S^2$.  By $\la r
\ra$ we denote a smooth radial function which agrees with $r$ for
large $r$ and satisfies $\la r \ra \geq 2$.  We consider a partition
of $ \R^{3}$ into the dyadic sets $A_R= \{\la r \ra \approx R\}$ for
$R \geq 1$, with the obvious change for $R=1$.
  
To describe the regularity of the coefficients of the metric, we use
the following sets of vector fields:
\[
T = \{ \partial_t, \partial_i\}, \qquad \Omega = \{x_i \partial_j - x_j \partial_i\}, \qquad
S = t \partial_t + x \partial_x,
\]
namely the generators of translations, rotations and scaling. We set
$Z = \{ T,\Omega,S\}$.  Then we define the classes $S^Z(r^k)$ of
functions in $\R^+ \times \R^3$ by
\[
 a \in S^Z(r^k) \Longleftrightarrow 
|Z^j a(t, x)| \leq c_{j} \la r\ra^{k}, \quad j \geq 0.
\]
By $S^Z_{rad}(r^k)$ we denote spherically symmetric functions in $S^Z(r^k)$.

The estimates in this article apply  to solutions for  an inhomogeneous problem
of the form
\begin{equation}\label{box}
 (\Box_g + V)u = f, \qquad u(0) = u_0, \qquad \partial_t u(0)=u_1
\end{equation}
associated to d'Alembertian $\Box_g$ corresponding to a Lorentzian
metric $g$, a potential $V$, nonhomogeneous term $f$ and compactly
supported initial data $u_0$, $u_1$. For the metric $g$ we consider
two cases:

\bigskip

{\bf Case A:} $g$ is a smooth Lorentzian metric  in $\R^+ \times \R^3$, 
with the following properties:

(i) The level sets $t = const$ are space-like.

(ii) $g$ is asymptotically flat in the following sense:
\[
 g = m + g_{sr} + g_{lr}, 
\]
where $m$ stands for the Minkowski metric, $g_{lr}$ is a stationary long range 
spherically symmetric component, with $S^Z_{rad}(r^{-1})$ coefficients,
of the form
\[
 g_{lr} = g_{lr, tt}(r) dt^2 + g_{lr, tr}(r) dt dr + g_{lr, rr}(r) dr^2 +  
g_{lr, \omega \omega}(r) r^2 d \omega^2
\]
and $g_{sr}$ is a short range component of the form
\[
 g_{sr} = g_{sr, tt} dt^2 + 2g_{sr, ti} dt dx_i + g_{sr, ij} dx_i dx_j 
\]
with $S^Z(r^{-2})$ coefficients.

\bigskip

We remark that these assumptions guarantee that $\partial_t$
is time-like near spatial infinity, but not necessarily in a compact set.
This leads us to the second case we consider:

\bigskip

{\bf Case B:} $g$ is a smooth Lorentzian metric in  an exterior domain 
$\R \times \R^3\setminus B(0,R_0)$ which  satisfies
(i),(ii) above in its domain, and in addition

(iii) the lateral boundary $\R \times \partial B(0,R_0)$ is outgoing
space-like.

\bigskip

This latter condition insures that the corresponding wave equation is
well-posed forward in time. This assumption is satisfied in the case
of the Schwarzschild and Kerr metrics (or small perturbations thereof)
in suitable advanced time coordinates. There the parameter $R_0$ is
chosen so that $0 < R_0 < 2M$ in the case of the Schwarzschild metric,
respectively $r^- < R_0 < r^+$ in the case of Kerr (see \cite{HE} for
the definition of $r^{\pm}$), so that the exterior of the $R_0$ ball
contains a neighbourhood of the event horizon.

\subsection{Normalized coordinates}
Our decay results are expressed relative to the distance to the
Minkowski null cone $ \{t = |x|\}$. This can only be done provided
that there is a null cone associated to the metric $g$ which is within
$O(1)$ of the Minkowski null cone.  However, in general the long range
component of the metric produces a logarithmic correction to the
cone. This issue can be remedied via a change of coordinates which
roughly corresponds to using Regge-Wheeler coordinates in
Schwarzschild/Kerr near spatial infinity.  See \cite{Tat}. After a further conformal
transformation\footnote {which changes the potential $V$}, see also
\cite{Tat}, the metric $g$ is reduced to a normal form where
\[
 g_{lr} = g_{\omega}(r) r^2 d \omega^2, \qquad g_\omega \in S^Z_{rad}(r^{-1}).
\]
In particular, we can replace $\Box_g$ by an operator
of the form
\begin{equation}\label{P}
 P = \Box + Q
\end{equation}
where $\Box$ denotes the d'Alembertian in the Minkowski metric and the
perturbation $Q$ has the form
\begin{equation}
Q  = g^{\omega} \Delta_{\omega} +
\partial_\alpha g_{sr}^{\alpha \beta}\partial_\beta + V,
\quad g_{sr}^{\alpha \beta} \in S^Z(r^{-2}), \  g^\omega\in
S^Z_{rad}(r^{-3}), \ V \in S^Z(r^{-3}).
\label{can-p-sr}\end{equation} 
We call these coordinates normal coordinates.  All of the analysis in
the paper is done in normal coordinates and with $g$ in normal form.
The full perturbation $Q$ above has only short range
effects.

\subsection{Uniform energy bounds}
The Cauchy data at time $t$ for the evolution \eqref{box}
is given by $ (u(t),\partial_t u(t))$.  To measure it we use 
the Sobolev spaces $H^k$, with the qualification that in Case A  
this  means $H^k:=H^k(\R^3)$,  while in Case B we 
use the obvious modification $H^k:=H^k(\R^3\backslash B(0,R_0))$.
We begin with the following
definition:

\begin{definition}
We say that the evolution \eqref{box} is forward bounded if the following
estimates hold:
\begin{equation}
\| \nabla u(t_1)\|_{H^k} \leq c_k (\|\nabla u(t_0)\|_{ H^k} +\|f\|_{L^1([t_0,t_1]; H^k)}),
\qquad 0 \leq t_0 \leq t_1, \quad k \geq 0. 
\end{equation}
\end{definition}

It suffices to have this property when $f=0$. Then the $f$ term can be
added in by the Duhamel formula.  One case when the uniform forward
bounds above are easy to establish is when $\partial_t$ is a Killing
vector which is everywhere time-like and $V$ is nonnegative and
stationary. Otherwise, there is no general result, but various cases
have been studied on a case by case basis.

We remark that in the case of the Schwarzschild and Kerr space-times
$\partial_t$ is not everywhere time-like, so the forward boundedness
is not straightforward. However, it is known to hold for Schwarzschild
(see 
\cite{MR2527808} and \cite{MMTT}) as well as for Kerr with small
angular momentum (see \cite{MR2405857}, \cite{DR3}, \cite{TT}) and for
a class of small stationary perturbations of Schwarzschild (see
\cite{DR3}).

The forward boundedness is not explicitly used in what follows, but
it is defined here since it is usually seen as a prerequisite for
everything that follows.

\subsection{Local energy decay}
A stronger property of the wave flow is local energy decay. 
We introduce the local energy norm $LE$ 
\begin{equation}
\begin{split}
 \| u\|_{LE} &= \sup_R  \| \la r\ra^{-\frac12} u\|_{L^2 (\R \times A_R)}  \\
 \| u\|_{LE[t_0, t_1]} &= \sup_R  \| \la r\ra^{-\frac12} u\|_{L^2 ([t_0, t_1] \times A_R)},
\end{split} 
\label{ledef}\end{equation}
its $H^1$ counterpart
\begin{equation}
\begin{split}
  \| u\|_{LE^1} &= \| \nabla u\|_{LE} + \| \la r\ra^{-1} u\|_{LE} \\
 \| u\|_{LE^1[t_0, t_1]} &= \| \nabla u\|_{LE[t_0, t_1]} + \| \la r\ra^{-1} u\|_{LE[t_0, t_1]},
\end{split}
\end{equation}
as well as the dual norm
\begin{equation}
\begin{split}
 \| f\|_{LE^*} &= \sum_R  \| \la r\ra^{\frac12} f\|_{L^2 (\R \times A_R)} \\
 \| f\|_{LE^*[t_0, t_1]} &= \sum_R  \| \la r\ra^{\frac12} f\|_{L^2 ([t_0, t_1] \times A_R)}.
\end{split} 
\label{lesdef}\end{equation}
These definitions are specific to $(1+3)$-dimensional problems.
Some appropriate modifications are needed in other dimensions, see for
instance \cite{MT}.  We also define similar norms for higher Sobolev
regularity
\[
\begin{split}
  \| u\|_{LE^{1,k}} &= \sum_{|\alpha| \leq k} \| \partial^\alpha u\|_{LE^1} \\
  \| u\|_{LE^{1,k}[t_0, t_1]} &= \sum_{|\alpha| \leq k} \| \partial^\alpha u\|_{LE^1[t_0, t_1]} \\
  \| u\|_{LE^{0,k}[t_0, t_1]} &= \sum_{|\alpha| \leq k} \| \partial^\alpha u\|_{LE[t_0, t_1]},
\end{split}
\]
respectively 
\[
\begin{split}
  \| f\|_{LE^{*,k}} &=  \sum_{|\alpha| \leq k}  \| \partial^\alpha f\|_{LE^{*}} \\
  \| f\|_{LE^{*,k}[t_0, t_1]} &=  \sum_{|\alpha| \leq k}  \| \partial^\alpha f\|_{LE^{*}[t_0, t_1]}.
\end{split}  
\]

In Case A above this leads to the following

\begin{definition}
We say that the evolution \eqref{box} has the local energy decay
property if the following estimate holds:
\begin{equation}
 \| u\|_{LE^{1,k}[t_0,\infty)} 
\leq c_k (\|\nabla u(t_0)\|_{H^k} + \|f\|_{LE^{*,k}[t_0,\infty)}), \qquad t_0 \geq 0, \ 
k \geq 0
\label{le}\end{equation}
in $\R \times \R^3$.
\end{definition}

The first local energy decay estimates for the wave equation were
proved in the work of Morawetz~\cite{MR0204828}, \cite{MR0234136},
\cite{MR0372432}; estimates of this type are often called Morawetz
estimates. There is by now an extensive literature devoted to this
topic and its applications; without being exhaustive we mention
\cite{Strauss}, \cite{KPV}, \cite{smithsogge}, \cite{MR1771575},
\cite{MR1945285}, \cite{MR2106340},
\cite{MR2217314}, \cite{MR2299569}, \cite{MR2128434}, \cite{MR2154375}.

The sharp form of the estimates as well as the notations above are
from Metcalfe-Tataru~\cite{MT}; this paper also contains a proof of
the local energy decay estimates for small (time dependent) long range
perturbations of the Minkowski space-time and further references. See
also \cite{MT1}, \cite{MR2266993}, \cite{MR2217314} for time dependent
perturbations, as well as, e.g., \cite{MR2001179}, \cite{BonyHafner}, \cite{SoggeWang} for time indepedent,
nontrapping perturbations.  There is a related
family of local energy estimates for the Schr\"odinger equation.  See,
e.g., the original works \cite{MR928265}, \cite{Sjo}, \cite{Veg} in
this direction as well as \cite{MR1795567}, \cite{CKS} in variable
geometries.  For notations and estimates most reminiscent to those
used here, we refer the reader to \cite{gS} and \cite{MMT}.

In Case B an estimate such as \eqref{le} cannot hold due to
the existence of trapped rays, i.e. null geodesics confined to a
compact spatial region. However a weaker form of the local energy decay
may still hold if the trapped null geodesic are hyperbolic. This is the
case for both the Schwarzschild metric  and for the
Kerr metric with angular momentum $|a| < M$.
To state such bounds we introduce a weaker version of the local energy
decay norm
\[
\begin{split}
  \| u\|_{LE^1_{w}} &= \| (1-\chi) \nabla u\|_{LE} + \| \la r\ra^{-1} u\|_{LE} \\
  \| u\|_{LE^1_{w}[t_0, t_1]} &= \| (1-\chi) \nabla u\|_{LE[t_0, t_1]} + \| \la r\ra^{-1} u\|_{LE[t_0, t_1]}
\end{split}
\]
for some spatial cutoff function $\chi$ which is smooth and compactly
supported. Heuristically, $\chi$ is chosen so that it equals $1$ in a
neighbourhood of the trapped set. We define as well a dual type norm
\[
\begin{split}
 \| f\|_{LE^*_w} &= \| \chi f\|_{L^2 H^1}+ \| (1-\chi) f\|_{LE^*} \\
\| f\|_{LE^*_w[t_0, t_1]} &= \| \chi f\|_{L^2[t_0, t_1] H^1}+ \| (1-\chi) f\|_{LE^*[t_0, t_1]}.
\end{split}
\]
As before we define the higher norms $LE^{1,k}_{w}$ respectively
$LE^{*,k}_w$.

\begin{definition}
 We say that the evolution \eqref{box} has the weak local energy decay
property if the following estimate holds:
\begin{equation}
 \| u\|_{LE^{1,k}_w[t_0,\infty)} \leq c_k (\|\nabla u(t_0)\|_{H^k} + \|f\|_{LE^{*,k}_w[t_0,\infty)} ),
 \qquad k \geq 0, \ t_0 \geq 0
\label{lew}\end{equation}
in either $\R \times \R^3$ or
in the exterior domain case.
\label{d:weakle}\end{definition}

Note that this implies in particular 
\begin{equation}\label{derivloss}
  \| u\|_{LE^{1,k}[t_0,\infty)} \leq c_k (\|\nabla u(t_0)\|_{ H^{k+1}} + \|f\|_{LE^{*,k+1}[t_0,\infty} ),
\end{equation}
hence we can get rid of the loss at the trapped set by paying the
price of one extra derivative.

Two examples where weak local energy decay is known to hold are the
Schwarz\-schild space-time and the Kerr space-time with small angular
momentum $|a| \ll M$.  In the Schwarzschild case the above form of the
local energy bounds with $k=0$ was obtained in \cite{MMTT}, following earlier
results in \cite{MR1732864}, \cite{MR1972492},
\cite{MR2113761},\cite{BS}, \cite{MR2259204},
\cite{MR2527808},~\cite{DR1}.  The number of derivatives lost in
\eqref{derivloss} can be improved to any $\epsilon > 0$ (see, for
example, \cite{BS}, \cite{MMTT}), but that is not relevant for the
problem at hand.

In the case of the Kerr space-time with small angular momentum $|a|
\ll M$ the local energy estimates were first proved in
Tataru-Tohaneanu~\cite{TT}, in a form which is compatible with
Definition~\ref{d:weakle}.  Stronger bounds near the trapped set as
well as Strichartz estimates are contained in the paper of
Tohaneanu~\cite{Mihai}. For related work we also refer the
reader to \cite{Mihai}, \cite{DR4} and \cite{AB}. For large
angular momentum $|a|<M$ a similar estimate was proved
in \cite{DR6} for axisymmetric solutions.

The high frequency analysis of the dynamics near the hyperbolic
trapped orbits is a very interesting related topic, but does not have
much to do with the present article.  For more information we refer
the reader to \cite{MR1294470}, \cite{MR2450154}, \cite{NZ},
\cite{WZ}, \cite{MR2051618}.

One disadvantage of the bound \eqref{derivloss} is that it is not very
stable with respect to perturbations. To compensate for that, for the
present result we need to introduce an additional local energy type
bound:

\begin{definition}\label{Enbdsdef}
  We say that the problem \eqref{box} satisfies stationary local
  energy decay bounds if on any time interval $[t_0,t_1]$ and $k \geq 0$ 
 we have
\begin{equation}\label{sle}
\| u\|_{LE^{1,k}[t_0,t_1]} \lesssim_k \|\nabla u(t_0)\|_{H^k} + \|\nabla u(t_1)\|_{H^k}+ 
\| f\|_{LE^{*,k}[t_0,t_1]} + \|\partial_t u\|_{LE^{0, k}[t_0,t_1]}.
\end{equation}
\end{definition} 

Unlike the weak local energy decay, here there is no loss of derivatives. Instead,
the price we pay is the local energy of $\partial_t u$ on the right. Heuristically,
\eqref{sle} can be viewed as a consequence of \eqref{lew} whenever $\partial_t$
is timelike near the trapped set. In the stationary case, this can be thought 
of as a substitute of an elliptic estimate at zero frequency.

While one can view the stationary local energy decay as a consequence
of the local energy decay, it is in effect far more robust and easier
to prove than the weak local energy decay provided that $\partial_t$
is timelike near the trapped set. This allows one to completely
sidestep all trapping related issues.  This difference is quite
apparent in the last section of the paper, where we separately establish 
both stationary local energy decay and weak local energy decay
for small perturbations of Kerr. While the former requires merely  smallness
of the perturbation uniformly in time, the latter needs a much stronger $t^{-1-}$ 
decay to Kerr.
 
Because of the above considerations, for our first (and main) result
in the theorem below we are only using as hypothesis the stationary
local energy condition.

\subsection{The main result}
 
Given a multiindex $\alpha$ we denote by $u_\alpha = Z^\alpha u$. By
$u_{\leq m}$ we denote the collection of all $u_\alpha$ with $|\alpha|
\leq m$.  We are now ready to state the main result of the paper:

 \begin{theorem}\label{main}
   Let $g$ be a metric which satisfies the conditions (i), (ii) in $\R
   \times \R^3$, or (i), (ii), (iii) in $\R \times \R^3\setminus
   B(0,R_0)$, and $V$ belonging to $S(r^{-3})$. Assume that the
   evolution \eqref{box} satisfies the stationary local energy bounds
   from Definition \ref{Enbdsdef}.  Suppose that in normalized
   coordinates the function $u$ solves $\Box u = f$ and is supported
   in the forward cone $C = \{ t \geq r - R_1\}$ for some $R_1>0$. Then the following
   estimate holds in normalized coordinates for large enough $m$:
\begin{equation}
| u(t,x)| \lesssim \kappa \frac{1}{\la t\ra \la t-r\ra^{2}}, \qquad | \nabla u(t,x)| 
\lesssim \kappa \frac{1}{\la r \ra \la t-r\ra^{3}}
\label{pointwiseest1}
\end{equation}
where 
\[
\kappa = \| u_{\leq m}\|_{LE^1} + \| t^\frac52 f_{\leq m}\|_{LE^*} + \|r t^\frac52 \nabla f_{\leq m}\|_{LE^*}.
\]
If in addition the weak local energy bounds \eqref{lew} hold then the same result 
is valid for all forward solutions to \eqref{box} with data $(u_0,u_1)$ and $f$ 
supported inside the cone $C$ and 
\[
\kappa = \| \nabla u(0)\|_{ H^m} + \| t^\frac52 f_{\leq m}\|_{LE^*} + \|r t^\frac52 \nabla f_{\leq m}\|_{LE^*}.
\]
  \end{theorem}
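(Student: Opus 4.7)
The plan is to combine the vector-field method with the hypothesized stationary local energy decay and an $r^p$-weighted estimate in the exterior, and then convert the resulting weighted $L^2$ bounds into pointwise decay via a Klainerman--Sobolev inequality. This avoids any Fourier transform in $t$, which is precisely the feature of \cite{Tat} that is incompatible with nonstationary metrics.

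First I would commute the equation with $Z=\{T,\Omega,S\}$. Since $[\Box,T]=[\Box,\Omega]=0$ and $[\Box,S]=2\Box$ on Minkowski space, and the perturbation $Q$ in \eqref{can-p-sr} has coefficients of class $S^Z(r^{-2})$ and $S^Z_{rad}(r^{-3})$, the commutator $[Z^\alpha,Q]$ remains short range and is absorbable into the right-hand side of a local energy estimate. Hence each $u_\alpha$ with $|\alpha|\le m$ satisfies $\Box u_\alpha = f_\alpha + e_\alpha$, with the error $e_\alpha$ controlled by $u_{\le|\alpha|-1}$ in an $LE^*$-type norm with the appropriate $\la r\ra$ weights. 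Applying \eqref{sle} to every $u_\alpha$ on dyadic time slabs $[T,2T]$ then yields a uniform bound for $u_{\le m}$ in $LE^1$.

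Next, I would extract $t$-decay by pigeonholing. On a slab $[T,2T]$ the identity $S=t\partial_t+r\partial_r$ combined with the split $r\le t/2$ versus $r\ge t/2$ allows one to trade a commutation with $S$ for a factor of $T$ in the interior, so that the finiteness of $\|u_{\le m}\|_{LE^1}$ upgrades to $\|\partial_t u_{\le m-1}\|_{LE^1[T,2T]}\lesssim T^{-1}\kappa$, and iterating gives $\|\la t\ra^{k} u_{\le m-k}\|_{LE^1}\lesssim \kappa$ for $k$ up to about $5/2$; at each step the $\la t\ra^{5/2}$ weight on $f$ inside $\kappa$ supplies the source. To capture the $\la t-r\ra^{-2}$ factor near the cone, I would run in parallel a Dafermos--Rodnianski style $r^p$-weighted energy inequality in the exterior region $r\gtrsim t$, which is available because $Q$ is short range in normal coordinates and $u$ is supported in the forward cone $C$. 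Combining these gives a bound of the form
\[
\|\la t\ra\la t-r\ra^{2} u_{\le m'}\|_{LE^1}\lesssim \kappa
\]
for some $m'<m$. A Klainerman--Sobolev embedding on the dyadic shells $A_R$, together with the identity $\partial\in\la r\ra^{-1}\mathrm{span}\{Z\}$ away from the cone (and the equation itself near it), then converts this into the pointwise estimates \eqref{pointwiseest1}. The final statement of the theorem, with $\|\nabla u(0)\|_{H^m}$ replacing $\|u_{\le m}\|_{LE^1}$, follows by invoking the weak local energy hypothesis \eqref{lew} on the commuted equations together with Duhamel.

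The main obstacle I expect is the bootstrap that produces $t$-decay from the stationary local energy bound. The estimate \eqref{sle} supplies no $t$-decay on its own, and on the right-hand side the term $\|\partial_t u\|_{LE^{0,k}}$ must be controlled inductively without losing derivatives beyond the budget $m$ used in $\kappa$. Coordinating the $\la t\ra$-decay obtained from commutations with $S$ with the $\la t-r\ra$-decay obtained from the $r^p$-weighted estimates precisely in the overlap region $r\sim t$, and doing so while the short-range perturbation $Q$ is mixing angular and radial modes, is the delicate part of the argument and is the place where the present paper must genuinely improve on the classical vector-field method.
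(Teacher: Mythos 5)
Your high-level plan shares the paper's starting point (commuting with $Z$, using the stationary local energy bound \eqref{sle}, converting weighted $L^2$ to pointwise via cone-adapted Sobolev embeddings), but three of the load-bearing steps are either absent or would not close as sketched.

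First, the paper does not use an $r^p$-weighted energy hierarchy. Its exterior mechanism is the one-dimensional reduction: majorize $|G_\alpha|$ by its spherical $L^2$ average $H_\alpha$, solve the radial wave equation $\Box v_\alpha = H_\alpha$, and use positivity of the $3{+}1$ fundamental solution to get $|u_\alpha|\le v_\alpha$, with $rv_\alpha$ an explicit double integral over the backward characteristic rectangle $D_{tr}$. This is what directly produces the $\la t-r\ra$ gains near the cone. Your substitution of an $r^p$ estimate is a genuinely different route, but you give no reason it would deliver $\la t-r\ra^{-2}$ decay: the standard $p\le 2$ hierarchy gives at best $t^{-3/2}$ pointwise, and pushing to $t^{-3}$ by that method requires a substantially extended hierarchy that you have not described. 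Second, the "pigeonholing" bootstrap is not a closed argument. The only decay mechanism in \eqref{sle} is the $\|\partial_t u\|_{LE}$ term, and the gain $\partial_t u = t^{-1}(Su - r\partial_r u)$ buys exactly one factor of $T^{-1}$ per use; iterating it against \eqref{sle} alone cannot produce $\|\la t\ra^k u_{\le m-k}\|_{LE^1}\lesssim\kappa$, because the right side of \eqref{sle} re-introduces $\|\nabla u(t_i)\|$ and $\|\partial_t u\|_{LE}$ with no decay attached. What actually happens in the paper (Proposition~\ref{lv:smallr} together with the one-dimensional reduction) is a two-region loop: the exterior pointwise bound is imported into the interior $L^2$ estimate via the boundary terms and the $\partial_t u$ term, yielding one extra $T^{-1}$, and the improved interior bound then upgrades the source $G_\alpha$ for the next pass through the one-dimensional reduction. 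Your sketch has only half of that loop.

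Third, and most importantly, even if the two-region loop were set up correctly it would stall at a $\log^3\la t-r\ra/(t\la t-r\ra^2)$ bound rather than reach \eqref{pointwiseest1}: after the second iteration the dominant part of $G_\alpha$ near the cone is of the form $\partial_t(b\,u_{<\,\cdot})$ with $b\in S^Z(r^{-2})$, and applying the one-dimensional reduction to $|\partial_t(b u)|$ loses a derivative that one cannot afford. The paper handles this with the splitting $G = G^1 + G^2$ and Lemma~\ref{l:maindr}, which exploits the cancellation by writing the output of $\partial_t f$ as $\partial_t v$ with $\Box v = f$ and estimating $\partial_t v$ through $Sv$, $(t\partial_i + x_i\partial_t)v$ and the support constraint. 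This cancellation is essential to pass from the second to the third decay bound and to remove the accumulated logarithms in the final step; your proposal does not address it, so the argument as written cannot reach the stated rate.
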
 

 We remark that we actually prove a slightly stronger result, with $\kappa$ replaced by $C_4$ in Lemma~\ref{finaldecay}. 
   
  As an application of this result, in the last part of the paper we
  prove Price's Law for certain small, time-dependent perturbations of
  Kerr spacetimes with small angular momentum (and $V=0$).

  The problem of obtaining pointwise decay rates for linear and
  nonlinear wave equations has had a long history. Dispersive $L^1 \to
  L^\infty$ estimates providing $t^{-1}$ decay of $3+1$ dimensional
  waves in the Minkowski setting have been known for a long
  time. 

The need for weighted decay inside the cone arose in John's proof
\cite{MR535704} of the Strauss conjecture in $3+1$ dimensions.
Decay bounds for $\Box+V$ with $V= O(r^{-3})$, similar to those
given by Price's heuristics,  were obtained by Strauss-Tsutaya~\cite{MR1432072}
and Szpak~\cite{MR2475479}, \cite{Szpak}.  See also
Szpak-Bizo{\'n}-Chmaj-Rostworowski~\cite{MR2512504}.

A more robust way of proving pointwise estimates via $L^2$ bounds and
Sobolev inequalities was introduced in the work of Klainerman, who
developed the so-called vector field method, see for instance \cite{MR784477},
\cite{MR865359}. This idea turned out to have a myriad of
applications and played a key role in the
Christodoulou-Klainerman~\cite{MR1316662} proof of the asymptotic
stability of the Minkowski space time for the vacuum Einstein
equations.

In the context of the Schwarzschild space-time, Price was the first to
heuristically compute the $t^{-3}$ decay rate for linear waves. More
precise heuristic computations were carried out later by
Ching-Leung-Suen-Young~\cite{PhysRevLett.74.2414},
\cite{PhysRevLett.74.4588}. Following work of Wald~\cite{MR534342},
the first rigorous proof of the boundedness of the solutions to the
wave equation was given in Kay-Wald~\cite{MR895907}.

Uniform pointwise $t^{-1}$ decay estimates were obtained by
Blue-Sterbenz~\cite{BS} and also Dafermos-Rodnianski~\cite{MR2527808};
the bounds in the latter paper are stronger in that they extend
uniformly up to the event horizon. A local $t^{-\frac32}$ decay result
was obtained by Luk~\cite{L}. These results are obtained using
multiplier techniques, related to Klainerman's vector field method; in
particular the conformal multiplier plays a key role.

Another venue which was explored was to use the spherical symmetry in
order to produce an expansion into spherical modes and to study the
corresponding ode. This was pursued by Kronthaler~\cite{MR2226325},
\cite{K}, who in the latter article was able to establish the sharp
Price's Law in the spherically symmetric case. A related analysis was
carried out later by Donninger-Schlag-Soffer~\cite{DSS} for all the
spherical modes; they establish a $t^{-2-2k}$ local decay for the $k$-th
spherical mode. Later the same authors obtain the sharp $t^{-3-2k}$ in
\cite{DSS2}.

Switching to Kerr, the first decay results there were obtained by
Finster-Kamran-Smoller-Yau \cite{MR2215614}. Later
Dafermos-Rodnianski~\cite{DR4} and Anderson-Blue~\cite{AB} were able
to extend their Schwarzschild results to Kerr, obtaining almost a
$t^{-1}$ decay. This was improved to $t^{-1}$ by
Dafermos-Rodnianski~\cite{DR5} and to $t^{-\frac32}$ by
Luk~\cite{Luk2}.

Finally, the $t^{-3}$ decay result (Price's Law) was proved by
the second author in \cite{Tat}; the result there applies for a large
class of stationary, asymptotically flat space-times. In addition, in \cite{Tat} the
optimal decay is obtained  in the full forward light cone.

\section{ Vector fields and local energy decay}

The primary goal of this section is to develop localized energy
estimates when the vector fields $Z$ are applied to the solution $u$.

\subsection{Vector fields: notations and definitions}

For a triplet $\alpha = (i,j,k)$ of multi-indices $i$, $j$ and
nonnegative integer $k$ we denote
$|\alpha| = |i|+3|j|+9k$ and
\[
u_\alpha = T^i \Omega^j S^k u.
\]
On the family of such triplets we introduce the ordering induced by
the ordering of integers. Namely, if $\alpha_1 = (i_1,j_1,k_1)$ and
$\alpha_2 = (i_2,j_2,k_2)$ then we define
\[
\alpha_1 \leq \alpha_2 \qquad\equiv \qquad |i_1| \leq |i_2|, \ |j_1| \leq
|j_2|, \ k_1 \leq k_2.
\]
We use $<$ for the case when equality does not hold.  For an integer
$m$ we denote
\[
\alpha_1 \leq \alpha_2 + m \qquad\equiv \qquad\alpha_1 \leq \alpha_2 +
\beta, \quad |\beta| \leq m.
\]

We also define
\[
\begin{split}
u_{\leq m} &= (u_{\alpha})_{|\alpha|\le m} \\
u_{\leq \beta} &= (u_{\alpha})_{\alpha\le \beta}.
\end{split}
\]
and the analogues for $<$ instead of $\leq$.

We now study the commutation properties of the vector fields with $P$.
Denoting by $\Qsr$ the class of all operators of the form
\[\partial_\alpha g^{\alpha \beta}\partial_\beta + V,
\qquad g_{sr}^{\alpha \beta} \in S^Z(r^{-2}),\quad V \in S^Z(r^{-3}),\]
we see that $Q$ defined in \eqref{can-p-sr} consists of $R\Omega^2$
where $R\in S^Z_{rad}(r^{-3})$ plus an element of $\Qsr$.

We now record the commutators of $P$ with vector fields. The commutator with $T$  yields
\begin{equation}
[P,T] = Q, \qquad Q \in \Qsr.
\label{ptcom}\end{equation}
The same applies with $\Omega$,
\begin{equation}
[P,\Omega] = Q, \qquad Q \in \Qsr.
\label{pocom}\end{equation}
However, in the case of $S$ we get an extra contribution arising from the 
long range part of $P$,
\begin{equation}
PS- (S+2)P = Q + R \Omega^2, \qquad Q \in \Qsr, \qquad R\in S^Z_{rad}(r^{-3}).
\label{pscom}\end{equation}
Further commutations preserve the $\Qsr$ class. 
Thus we can write the equation for $\Omega^j S^k u $ in the form
\begin{equation}
P \Omega^j S^k u =  \Omega^j (S+2)^k Pu + Q u_{\leq 3j+9k-3} \qquad Q \in \Qsr.
\label{poscom}\end{equation}

Suppose the function $u$ solves the equation
\[
Pu = f.
\]
Commuting with all vector fields, we obtain equations for the functions
$u_\alpha$. These can be written in the forms
 \begin{equation}
   P u_\alpha = Q u_{< |\alpha|} + f_{\le |\alpha|}=: F_\alpha, \qquad Q \in \Qsr,
\label{pua} \end{equation}
 \begin{equation}
   \Box u_\alpha = Q u_{\leq |\alpha|} + R\Omega^2 u_{\le |\alpha|} +
   f_{\le |\alpha|}=: G_\alpha, \qquad Q \in \Qsr, \quad R\in S^Z_{rad}(r^{-3}).
\label{boxua} \end{equation}
For $F_{\alpha}$ and $G_\alpha$ we have pointwise bounds of the form
\begin{equation}\label{Fbound}
  |F_\alpha| \lesssim \frac{1}{\la r\ra ^3}(|\Omega^2 u_{<|\alpha|-6}| + 
  |u_{<|\alpha|}|) + \frac{1}{\la r\ra ^2}(|\nabla^2 u_{<|\alpha|}| + 
  |\nabla u_{<|\alpha|}|) + |f_{\le|\alpha|}|,
\end{equation}
\begin{equation}\label{Gbound}
  |G_\alpha| \lesssim \frac{1}{\la r\ra ^3}(|\Omega^2 u_{\leq |\alpha|}| + 
  |u_{\leq |\alpha|}|) + \frac{1}{\la r\ra ^2}(|\nabla^2 u_{\leq |\alpha|}| + 
  |\nabla u_{\leq |\alpha|}|) + |f_{\le |\alpha|}|.
\end{equation}

As a general principle, we will use the latter equation to improve the
bounds on $u_\alpha$ away from $r = 0$ (precisely for $r \gtrsim t$),
and the former near $r = 0$ (precisely for $r \ll t$).

\subsection{ The weak local energy decay}

The statement of the weak local energy decay property
in Definition~\ref{d:weakle}  includes the vector fields $T$ but not $S$ 
or $\Omega$. We remedy this in the following

\begin{lemma} 
Assume that the weak local local energy decay property \eqref{lew} holds.
Then we also have
\begin{equation}
\| u_{\leq m}\|_{LE^1} \lesssim \| \nabla u_{\leq m+1}(0)\|_{L^2}+ \|f_{\leq m+1}\|_{LE^*}.
\label{lew:vf}
\end{equation}
\label{l:levf}\end{lemma}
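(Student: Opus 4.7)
The plan is to proceed by induction on $m$, applying the strong local energy estimate \eqref{derivloss} to each $u_\alpha = T^i\Omega^j S^k u$ individually and using the commutator identities \eqref{ptcom}--\eqref{pscom} to reduce the right-hand side to lower-order quantities. The base case $m = 0$ is exactly \eqref{derivloss} at $k = 0$: since $T \subset Z$, both $\|\nabla u(0)\|_{H^1}$ and $\|f\|_{LE^{*,1}}$ are dominated by $\|\nabla u_{\leq 1}(0)\|_{L^2}$ and $\|f_{\leq 1}\|_{LE^*}$, respectively.

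For the inductive step, fix $\alpha$ with $|\alpha| = m$ and assume the bound for all $|\beta| < m$. By \eqref{pua}, $u_\alpha$ solves $P u_\alpha = F_\alpha$ with $F_\alpha = Q u_{<|\alpha|} + f_{\leq |\alpha|}$, $Q \in \Qsr$ (augmented, when $\alpha$ involves $S$, by a term $R\Omega^2 u_{<|\alpha|}$ with $R \in S^Z_{rad}(r^{-3})$ from \eqref{pscom}). Applying \eqref{derivloss} with $k = 0$ to $u_\alpha$ yields
\[
\|u_\alpha\|_{LE^1} \lesssim \|\nabla u_\alpha(0)\|_{H^1} + \|F_\alpha\|_{LE^{*,1}}.
\]
The initial-data term is reduced to $\|\nabla u_{\leq m+1}(0)\|_{L^2}$ by converting the time derivatives inside $u_\alpha(0)$ into spatial ones via $Pu = f$ at $t = 0$; the resulting trace values of $f$ are absorbed into $\|f_{\leq m+1}\|_{LE^*}$.

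The essential analytic input is the short-range estimate
\[
\|Q v\|_{LE^*} \lesssim \|v\|_{LE^{1,1}}, \qquad Q \in \Qsr,
\]
together with its analogue for $R\Omega^2$. This follows from the pointwise bound $|Qv| \lesssim \la r\ra^{-2}(|\nabla^2 v| + |\nabla v|) + \la r\ra^{-3}|v|$, the definitions of the $LE$ and $LE^*$ norms, and the geometric summability $\sum_R R^{-1} \lesssim 1$ over dyadic $R \geq 1$. Since the coefficient class $S^Z$ is closed under $T$-applications, commuting partial derivatives past $Q$ preserves membership in $\Qsr$, and the estimate extends to $\|Qv\|_{LE^{*,k}} \lesssim \|v\|_{LE^{1,k+1}}$. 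Applied to $F_\alpha$, this bounds $\|F_\alpha\|_{LE^{*,1}}$ by $\|u_{<|\alpha|}\|_{LE^{1,2}} + \|f_{\leq m+1}\|_{LE^*}$. Using $T \subset Z$ to identify partial derivatives of $u_\beta$ with vector-field derivatives, $\|u_{<|\alpha|}\|_{LE^{1,2}}$ is controlled by sums of $\|u_{\beta'}\|_{LE^1}$ at weights $|\beta'|$ strictly smaller than (or aligned with) $|\alpha|$, to which the inductive hypothesis applies.

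The main obstacle is the derivative bookkeeping needed to obtain the sharp loss of a single vector-field weight stated in the lemma. A naive iteration loses two partial derivatives per commutation --- one from the $\Qsr$-action and one from the extra partial derivative implicit in $LE^{*,1}$ --- which would produce a bound requiring roughly $2m+1$ derivatives on the data. Matching the claim requires invoking \eqref{derivloss} at each level with the value of $k$ tuned to the specific multi-index in play, and then shifting the resulting partial-derivative counts into the vector-field multi-indices using the weight convention $|\alpha| = |i| + 3|j| + 9k$. This alignment, rather than the underlying analytic content, is the principal technical difficulty of the lemma.
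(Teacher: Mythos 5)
Your overall structure (induction on $m$, derivloss as the engine, and the short-range bound $\|Qv\|_{LE^*}\lesssim\|v\|_{LE^{1,1}}$) is the right framework, and the base case and the analytic ingredients are sound. But the central calculation in your inductive step does not close, and you yourself flag the reason in your last paragraph without resolving it: applying \eqref{derivloss} with $k=0$ to $u_\alpha$ forces you through \eqref{pua}, where the commutator source $F_\alpha$ contains $Q u_{<m}$, and after paying the two derivatives in $Q$ plus the extra derivative in $LE^{*,1}$ you arrive at $\|u_{<m}\|_{LE^{1,2}}\lesssim\|u_{\leq m+1}\|_{LE^1}$. The weight $m+1$ exceeds the weight $m$ on the left, so the inductive hypothesis is not applicable and nothing is absorbed. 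Your closing remark that one must ``invoke \eqref{derivloss} at each level with the value of $k$ tuned to the specific multi-index'' identifies the right idea but treats it as a bookkeeping detail to be checked later; in fact it is the essential structural device, and without carrying it out the proof is incomplete.

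What the paper actually does is qualitatively different: it factors $Z^\alpha = T^i\Omega^j S^k$, and rather than applying derivloss with $k=0$ to $u_\alpha$, it applies derivloss with regularity index $|i|$ to the $T$-free function $\Omega^j S^k u$. The $T$-derivatives then never enter the commutator at all --- they ride passively inside the $LE^{1,|i|}$ and $LE^{*,|i|+1}$ norms --- and the only commutator error is the one from \eqref{poscom}, $Q u_{\leq 3|j|+9k-3}$, which sits three weight-units below $3|j|+9k$. The $|i|+1$ partial derivatives from $LE^{*,|i|+1}$ plus the two from $Q$ then land at total weight $m$, but the $\langle r\rangle^{-2}$ weight in $Q$ allows one of them to be repackaged as the outer gradient in the $LE^1$ norm, giving $\|u_{\leq m-1}\|_{LE^1}$ exactly as needed for the induction. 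This is the mechanism that produces a loss of precisely one vector field, and it is what your version omits. The aside about converting time derivatives in $u_\alpha(0)$ into spatial ones via the PDE is also unnecessary, since $T\subset Z$ already puts $\|\nabla u_\alpha(0)\|_{H^1}$ below $\|\nabla u_{\leq m+1}(0)\|_{L^2}$.
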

\begin{proof}
We use induction with respect to $m$. For $m=0$ the bounds \eqref{derivloss} and 
\eqref{lew:vf} coincide. Consider now some $m > 0$, and $\alpha$ a multiindex
with $|\alpha| = m$.  If $Z^\alpha$ contains only $T$ derivatives then the bound
for $u_\alpha$ follows directly from \eqref{derivloss}. Else we factor
\[
Z^\alpha = T^i \Omega^j S^k.
\]
Applying \eqref{derivloss} to $\Omega^j S^k u$ and using \eqref{poscom} we obtain
\[
\begin{split}
\|Z^\alpha u\|_{LE^1} \lesssim & \ 
\| \Omega^j S^k u\|_{LE^{i,1}} 
\\ \lesssim & \ \|\nabla \Omega^j S^k u(0)\|_{H^{i+1}}+ \|\Omega^j S^k f\|_{LE^{i+1,*}}+
  \| Q u_{\leq 3j+9k-3}\|_{LE^{i+1,*}}
\\
\lesssim & \ \|\nabla u_{\leq m+1}(0)\|_{L^2}+ \| f_{\leq
  m+1}\|_{LE^*} + \|\la r \ra^{-2} \nabla u_{\leq m-1}\|_{LE^{*}} 
\\ &\qquad\qquad\qquad\qquad\qquad\qquad\qquad\qquad\qquad\qquad
 + 
\|\la r \ra^{-3}  u_{\leq m-1}\|_{LE^{*}}
\\
\lesssim & \ \|\nabla u_{\leq m+1}(0)\|_{ L^2}+ \| f_{\leq m+1}\|_{LE^*} + \|u_{\leq m-1}\|_{LE^1} 
\end{split}
\]
which concludes our induction.
 \end{proof}

\subsection{The stationary local energy decay}

Our first aim here is to   include the vector fields  $S$ and $\Omega$
in the stationary local energy decay bounds. A second aim is to derive 
a variation of the same bounds with different weights.

\begin{lemma} 
Assume that the stationary local energy decay property \eqref{sle} holds.
Then for all $0 \leq  t_0 < t_1$ we also have
\begin{equation}
\| u_{\leq m}\|_{LE^1[t_0,t_1]} \lesssim 
\sum_{i=0,1} \|\nabla  u_{\leq m}(t_i)\|_{L^2}
+ \|f_{\leq m}\|_{LE^*} + \| \partial_t u_{\leq m}\|_{LE[t_0,t_1]},
\label{sle:vf}
\end{equation}
respectively 
\begin{equation}
\| \nabla u_{\leq m}\|_{L^2} \lesssim \sum_{i=0,1}
\| \la r \ra^\frac12 \nabla u_{\leq m}(t_i)\|_{L^2}
+ \|\la r \ra f_{\leq m}\|_{L^2} + \| \partial_t u_{\leq m}\|_{L^2}.
\label{sle:vfb}
\end{equation}

\end{lemma}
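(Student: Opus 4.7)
The two inequalities are proved in sequence, with \eqref{sle:vf} serving as input for \eqref{sle:vfb}. For \eqref{sle:vf}, I would induct on $m$ in exact analogy with the proof of Lemma~\ref{l:levf}, using hypothesis \eqref{sle} in place of \eqref{lew}. At the inductive step, a multi-index $\alpha$ with $|\alpha|=m$ is factored as $Z^\alpha = T^i \Omega^j S^k$, and \eqref{sle} is applied to $\Omega^j S^k u$ with $i$ additional $T$-derivatives. The commutator identity \eqref{poscom} rewrites $P(\Omega^j S^k u)$ as $\Omega^j(S+2)^k Pu$ plus a $\Qsr$-error acting on $u_{\leq m-1}$; this error is bounded in $LE^{*,i}$ by $\|u_{\leq m-1}\|_{LE^1}$ via the short-range decay of $\Qsr$, closing the induction. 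Since $\Omega$ and $S$ do not affect the number of $T$-derivatives, the $\|\partial_t u_{\leq m}\|_{LE[t_0,t_1]}$ term on the right propagates unchanged from \eqref{sle}.

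For \eqref{sle:vfb} at level $m=0$, I would use the direct integration-by-parts identity
\[
\|\nabla u\|_{L^2}^2 = 2\|\partial_t u\|_{L^2}^2 - \Big[\int_{\R^3} u\,\partial_t u\, dx\Big]_{t_0}^{t_1} + \int\!\!\int u(f - Qu)\, dt\, dx,
\]
obtained by pairing $\Box u = f - Qu$ with $u$ and integrating by parts in both $t$ and $x$. The boundary terms at $t_i$ are controlled via Cauchy--Schwarz by $\|\la r\ra^{-1/2} u(t_i)\|_{L^2}\|\la r\ra^{1/2}\partial_t u(t_i)\|_{L^2}$, followed by the weighted Hardy inequality $\|\la r\ra^{-1/2} v\|_{L^2(\R^3)} \lesssim \|\la r\ra^{1/2}\nabla v\|_{L^2(\R^3)}$ (valid for $v$ decaying at infinity, via integration by parts against the vector field $\la r\ra^{-1} x$), producing the required $\|\la r\ra^{1/2}\nabla u(t_i)\|_{L^2}^2$ contribution. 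The forcing pairing satisfies $|\int\int uf|\leq \|u/\la r\ra\|_{L^2}\|\la r\ra f\|_{L^2}\lesssim \|\nabla u\|_{L^2}\|\la r\ra f\|_{L^2}$ by standard Hardy, and is absorbed into the left-hand side by AM--GM.

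The main obstacle is the $\int\int u\, Qu$ term. Using the short-range structure of $Q$ from \eqref{can-p-sr} and one spatial integration by parts, it reduces to expressions of the form $\int\int\bigl(\la r\ra^{-2}|\nabla u|^2 + \la r\ra^{-3}|u||\nabla u| + \la r\ra^{-3}u^2\bigr)$, together with analogous angular pieces from $g^\omega \Delta_\omega$. Splitting the spatial integration at a large radius $R_*$, the far-$r$ contribution is absorbed into $\|\nabla u\|_{L^2}^2$ with constant $O(R_*^{-1})$ via weighted Hardy, while the near-$r$ contribution is bounded by $\|u\|_{LE^1[t_0,t_1]}^2$. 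Invoking the already-proven \eqref{sle:vf} to control $\|u\|_{LE^1}$ by the right-hand side of \eqref{sle:vfb}---via the elementary dyadic comparisons $\|f\|_{LE^*}\lesssim \|\la r\ra f\|_{L^2}$, $\|\partial_t u\|_{LE}\leq \|\partial_t u\|_{L^2}$, and $\|\nabla u(t_i)\|_{L^2}\leq \|\la r\ra^{1/2}\nabla u(t_i)\|_{L^2}$---closes the $m=0$ case. The induction on $m\geq 1$ then follows the same commutator scheme as in \eqref{sle:vf}.
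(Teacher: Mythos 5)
Your proof of \eqref{sle:vf} matches the paper's (the paper simply remarks that it is identical to the proof of Lemma~\ref{l:levf} and omits it). For \eqref{sle:vfb} at $m=0$, however, you take a genuinely different route. The paper multiplies $Pu=f$ by the scaling multiplier $(x\partial_x+1)u = (r\partial_r+1)u$ and integrates by parts; in three dimensions this Morawetz/Pohozaev-type identity produces the \emph{sign-definite} bulk term $\tfrac12\int\!\!\int |\nabla u|^2$ directly, plus small corrections $O(\la r\ra^{-1})|\nabla u|^2 + O(\la r\ra^{-3})|u|^2$ and boundary terms $O(\la r\ra)|\nabla u|^2 + O(\la r\ra^{-1})|u|^2$ at $t=t_i$. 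You instead pair the equation with $u$ itself (the mass/Lagrangian multiplier), which produces only the \emph{indefinite} quantity $\int\!\!\int(|\nabla_x u|^2 - |\partial_t u|^2)$. Your route still closes because $\|\partial_t u_{\leq m}\|_{L^2}$ is available on the right-hand side of \eqref{sle:vfb}, letting you write $\|\nabla u\|_{L^2}^2 = 2\|\partial_t u\|_{L^2}^2 + \dots$; the paper's route is insensitive to the sign of the Lagrangian and would remain valid even if that term were absent. Both approaches then treat the boundary, forcing, and $Q$-error terms essentially identically (Cauchy--Schwarz, weighted Hardy, absorption for large $r$, and \eqref{sle:vf} for small $r$), and both handle $m\geq1$ by applying the $m=0$ bound to $u_\alpha$ using $Pu_\alpha = f_{\le m} + O(\la r\ra^{-2})\nabla u_{\leq m} + O(\la r\ra^{-3})u_{\leq m}$.

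One small omission: you carry out the computation as if in Case~A throughout. In Case~B (exterior domain with space-like lateral boundary $\{r=R_0\}$) both your multiplier identity and the weighted Hardy inequality acquire boundary contributions at $r=R_0$; the paper addresses this by inserting a cutoff supported in $\{r\gg 1\}$ so that the identity is applied only away from the inner boundary, with the near-boundary region again controlled through \eqref{sle:vf}. You should add this one sentence, but it does not affect the substance of your argument.
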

\begin{proof}
  The proof of \eqref{sle:vf} is identical to the proof of
  Lemma~\ref{l:levf} and is omitted.  We now prove \eqref{sle:vfb}. We
  begin with the case $m=0$, where we apply the classical method due
  to Morawetz.  Assume first that we are in Case A. 
Multiplying the equation $Pu = f$ by $(x \partial_x +
  1)u$ and integrating by parts we obtain
\[
\begin{split}
\int_{t_0}^{t_1} \int_{\R^3}Pu \cdot (x \partial_x + 1)u dx dt= & \ 
\int_{t_0}^{t_1}\int_{\R^3} |\nabla u|^2 + O(\la r \ra^{-1})  |\nabla u|^2 + O(\la r \ra^{-3}) |u|^2  
dx dt 
\\ & + \left.\int_{\R^3} O(\la r \ra) |\nabla u|^2 +  O(\la r \ra^{-1}) |u|^2 dx \right|_{t=t_0}^{t=t_1}.
\end{split}
\]
Using Cauchy-Schwarz on the left and estimating the $|u|^2$ terms by $|\nabla u|^2$
terms via Hardy inequalities we are left with 
\[
LHS\eqref{sle:vfb}(m=0) \lesssim RHS\eqref{sle:vfb}(m=0) + \| \nabla
u\|_{L^2(r \leq R)} + \|u\|_{L^2(r\leq R)}
\]
for some fixed large $R$.  Here the extraneous terms on the right are
only measured for small $r$, as the large $r$ contribution can be
absorbed on the left. However, to bound them for small $r$ we have at
our disposal the bound \eqref{sle:vf}, whose right hand side is
smaller than the right hand side of \eqref{sle:vfb}. The same outcome
is reached in Case B by inserting a cutoff function selecting the region $\{r \gg
1\}$ in the above computation.

To prove \eqref{sle:vfb} we can use a simpler direct argument since
there is no loss of derivatives on one hand and since we already have the
bound \eqref{sle:vf} to use to estimate $u_{<m}$ inside a compact
set. Precisely, for $|\alpha| = m$ we have
\[
P u_\alpha  = f_{\le m} + O(\la r \ra^{-2}) \nabla u_{\leq m} 
+ O(\la r \ra^{-3}) u_{\leq m}.
\]
Then we apply the $m=0$ case of \eqref{sle:vfb} for $u_\alpha$ and sum 
over $|\alpha| \leq m$. We obtain
\[
LHS\eqref{sle:vfb} \lesssim RHS\eqref{sle:vfb} + \| \nabla u_{\leq
  m}\|_{L^2(r \leq R)} + \|u_{\leq m}\|_{L^2(r\leq R)}
\]
and the last terms on the right are estimated by \eqref{sle:vf}.
\end{proof}

\section{The pointwise decay}

The strategy of the proof of our pointwise decay estimates is to iteratively 
improve the estimates via a two step approach.  The two steps are as follows:

(i) Use the properties of the fundamental solution for the constant
coefficient d'Alembertian $\Box$ via the equation \eqref{boxua}. This
yields improved bounds for $r \gg 1$, but no improvement at all 
for $r \sim 1$.

(ii) Use the stationary local energy decay estimates for the operator 
$P$ in the region $r \ll t$. This allows us to obtain improved bounds 
for small $r$. The transition from $L^2$ to pointwise bounds is done
in a standard manner via Sobolev type estimates.

\subsection{ The cone decomposition and Sobolev 
embeddings}

For the forward cone $C = \{ r \leq t\}$ we consider  a dyadic decomposition
 in time into sets
\[
C_{T} = \{ T \leq t \leq 2T, \ \ r \leq t\}.
\]
For each $C_T$ we need a further double dyadic decomposition of it
with respect to either the size of $t-r$ or the size of $r$, depending
on whether we are close or far from the cone,
\[
C_{T} = \bigcup_{1\leq  R \leq T/4}  C_{T}^{R}  \cup \bigcup_{1\leq  U < T/4} C_T^{U}
\]
where for $R,U > 1$ we set
\[
 C_{T}^{R} = C_T \cap \{ R < r < 2R \},
\qquad
C_{T}^{U} = C_T \cap \{ U < t-r < 2U\}
\]
while for $R=1$ and $U= 1$ we have
\[
 C_{T}^{R=1} = C_T \cap \{ 0 < r < 2 \},
\qquad
C_{T}^{U=1} = C_T \cap \{ 0 < t-r < 2\}
\]
with the obvious change for $C_T^1$ in Case B.  By $\tilde C_{T}^{R}$
and $\tilde C_{T}^{U}$ we denote enlargements of these sets in both
space and time on their respective scales. We also define
\[ 
C_{T}^{<T/2} = \bigcup_{R < T/4} C_T^R.
\]

The sets $C_T^R$ and $C_T^U$ represent the setting in which we apply
Sobolev embeddings, which allow us to obtain pointwise bounds from
$L^2$ bounds. Precisely, we have

\begin{lemma} \label{l:l2tolinf}
 For all $T \geq 1$ and $1 \leq R,U \leq T/4$  we have
 \begin{equation}
  \!  \| w\|_{L^\infty(C_T^{R})} \lesssim 
\frac{1}{T^{\frac12} R^{\frac32}} \sum_{i+j \leq 2} 
    \|S^i \Omega^j w\|_{L^2( \tilde C_T^{R})} +  \frac{1}{T^{\frac12} R^{\frac12}}
\sum_{i+j \leq 2}     \|S^i \Omega^j \nabla w\|_{L^2( \tilde C_T^{R})},
    \label{l2tolinf-r}\end{equation}
respectively
  \begin{equation}
    \| w\|_{L^\infty(C_T^{U})} \lesssim \frac{1}{T^{\frac32} U^{\frac12}} \sum_{i+j \leq 2} 
    \|S^i \Omega^j w\|_{L^2( \tilde C_T^{U})} +  \frac{U^{\frac12}}{T^{\frac32}}
 \sum_{i+j \leq 2}     \|S^i \Omega^j \nabla w\|_{L^2( \tilde C_T^{U})}.
    \label{l2tolinf-u}\end{equation}
\end{lemma}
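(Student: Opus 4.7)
Both bounds are standard Sobolev embeddings obtained by rescaling to unit scale, applying a 4D Sobolev estimate, and reading off the resulting weights. I will describe the plan for \eqref{l2tolinf-r} in detail; \eqref{l2tolinf-u} follows by the same argument after passing to null-type coordinates.

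First I would rescale $C_T^R$ via $\tilde t = t/T$, $\tilde x = x/R$ so that it maps to a unit-scale 4D region $\hat C \subset [1,2] \times \{1 < |\tilde x| < 2\}$, setting $\tilde w(\tilde t, \tilde x) = w(T\tilde t, R\tilde x)$. The Jacobian gives $\|\tilde w\|_{L^2(\hat C)} = (TR^3)^{-1/2}\|w\|_{L^2(C_T^R)}$, while the vector fields $S = t\partial_t + x \cdot \partial_x$ and $\Omega_{ij} = x_i\partial_j - x_j\partial_i$ are scale-invariant and pass through the rescaling unchanged; the spatial gradient rescales as $\tilde \nabla \tilde w = R\widetilde{\nabla w}$, contributing an extra factor of $R$ in its $L^2$ norm.

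The core step is the unit-scale Sobolev inequality
\[
\|\tilde w\|_{L^\infty(\hat C)} \lesssim \sum_{i+j \leq 2}\|S^i \Omega^j \tilde w\|_{L^2(\hat C)} + \sum_{i+j\leq 2}\|S^i \Omega^j \tilde \nabla \tilde w\|_{L^2(\hat C)},
\]
which I would prove by combining a 3D sphere--radial Sobolev of the form
\[
\|v\|_{L^\infty_{\tilde x}} \lesssim \sum_{|\alpha| \leq 2}\|\Omega^\alpha v\|_{L^2_{\tilde x}} + \sum_{|\alpha| \leq 2}\|\Omega^\alpha \partial_{\tilde r} v\|_{L^2_{\tilde x}}
\]
on the unit annulus $\{1 < |\tilde x| < 2\}$ (obtained from $H^2$ Sobolev on $\S^2$ composed with a 1D $H^1$ Sobolev in $\tilde r$) with a 1D $H^1$ Sobolev in $\tilde t$ on $[1,2]$, using the identity $\partial_{\tilde t} = \tilde t^{-1}(S - \tilde r \partial_{\tilde r})$ to trade each $\tilde t$-derivative for a combination of $S$ and $\tilde \nabla$. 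Unrescaling then produces the prefactors $T^{-1/2}R^{-3/2}$ and $(TR^3)^{-1/2}R = T^{-1/2}R^{-1/2}$ claimed in \eqref{l2tolinf-r}.

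For \eqref{l2tolinf-u}, I would use the null-type rescaling $\tilde t = t/T$, $\tilde s = (t-r)/U$, keeping $\omega \in \S^2$, under which $C_T^U$ maps to $[1,2]^2 \times \S^2$; the volume element of order $T^3 U$ yields the prefactor $(T^3 U)^{-1/2}$, the scale-invariance of $S, \Omega$ again holds, and the rescaled radial derivative is now $U\partial_r$ rather than $R\partial_r$, explaining the factor of $U$ between the two terms on the right of \eqref{l2tolinf-u}. The main technical obstacle in both cases is the unit-scale Sobolev step: its restriction to at most two $S, \Omega$ derivatives plus at most one $\tilde \nabla$ requires careful bookkeeping of commutators when converting $\partial_{\tilde t}$ into $S$ plus spatial derivatives, but introduces no new ideas beyond standard rescaling.
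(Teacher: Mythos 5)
Your overall strategy (change variables to a unit‐scale region, apply a unit‐scale Sobolev embedding, and read off the powers of $T,R,U$ from the Jacobian) is the same as the paper's, and you get the prefactors $T^{-1/2}R^{-3/2}$, $T^{-1/2}R^{-1/2}$, $T^{-3/2}U^{\pm 1/2}$ correctly. However, the specific $1$D$+3$D splitting you propose does not reproduce the derivative structure of the right-hand side, and this is a genuine gap, not just bookkeeping.

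Concretely, you first apply a $3$D Sobolev in $\tilde x$ (requiring $\Omega^\alpha\partial_{\tilde r}^l$ with $|\alpha|\le 2$, $l\le 1$) and then a $1$D Sobolev in $\tilde t$ (adding one $\partial_{\tilde t}$), so you need to control $\partial_{\tilde t}^k\Omega^\alpha\partial_{\tilde r}^l w$ for $k\le 1$, $|\alpha|\le 2$, $l\le 1$. Substituting $\partial_{\tilde t}=\tilde t^{-1}\bigl(S-\tilde r\partial_{\tilde r}\bigr)$ into the terms with $k=1$ produces, modulo harmless bounded coefficients and commutators, quantities such as $S\Omega^2 w$ and $S\Omega^2\partial_{\tilde r} w$ (which have $i+j=3$) as well as $\Omega^\alpha\partial_{\tilde r}^2 w$ (two spatial derivatives). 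None of these appears on the right of \eqref{l2tolinf-r}: the sums there are restricted to $i+j\le 2$ and to at most \emph{one} factor of $\nabla$. So the unit‐scale inequality you announce does not follow from the composition of Sobolev embeddings you describe; it would require a strictly larger right-hand side.

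The fix, which is what the paper's exponential coordinates $t=e^s$, $r=e^{s+\rho}$ achieve, is to take the $3$D slices to be the level sets of $r/t$ (i.e.\ $\rho=\mathrm{const}$), not the level sets of $t$. On those slices the coordinate derivatives are exactly $\partial_s=S$ and the angular derivatives (comparable to $\Omega$), so the $3$D $H^2\hookrightarrow L^\infty$ step uses precisely $S^i\Omega^j$ with $i+j\le 2$, and the transverse $1$D $H^1$ Sobolev adds a single $\partial_\rho = r\partial_r = x\cdot\nabla$, which on $C_T^R$ contributes the factor $R$ next to $\nabla$. Since $[S, r\partial_r]=[\Omega, r\partial_r]=0$, no extra indices are generated in the commutation, and the right-hand side of \eqref{l2tolinf-r} is obtained exactly. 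The same slicing, with $t-r=e^{s+\rho}$ so that $\partial_\rho = -(t-r)\partial_r$, gives \eqref{l2tolinf-u} with the factor $U$. In short: the $1$D direction must be $r\partial_r$ (respectively $(t-r)\partial_r$), not $\partial_t$; your choice forces a trade-off that overshoots the allowed derivative count.
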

\begin{proof}
  In exponential coordinates $(s,\rho,\omega)$ with $t = e^s$ and $r =
  e^{s+\rho}$ the bound \eqref{l2tolinf-r} is nothing but the usual
  Sobolev embedding applied uniformly in regions of size one.  The
  same applies for \eqref{l2tolinf-u} in exponential coordinates
  $(s,\rho,\omega)$ with $t = e^s$ and $t-r = e^{s+\rho}$.
\end{proof}

Expressed in terms of the local energy norm, the estimate 
\eqref{l2tolinf-r} yields

\begin{corollary}
  We have
  \begin{equation}
    \| w\|_{L^\infty(C_T^{<T/2})} \lesssim T^{-\frac12} \sum_{i+j \leq 2} 
    \|S^i \Omega^j w\|_{LE^1( \tilde C_T^{<T/2})}. 
    \label{letolinf}\end{equation}
\label{l:letolinf}
\end{corollary}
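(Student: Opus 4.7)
The plan is to dyadically decompose $C_T^{<T/2}=\bigcup_{1\le R<T/4}C_T^R$, apply the Sobolev estimate \eqref{l2tolinf-r} on each piece, and then convert the resulting $L^2$ norms on $\tilde C_T^R$ into $LE^1$ norms using the fact that $\la r\ra\approx R$ on these enlarged annular regions. The key observation is that the $R^{-3/2}$ and $R^{-1/2}$ weights in \eqref{l2tolinf-r} are exactly what is needed to match the $R$-weights implicit in the $LE^1$ norm, so the bound collapses to a uniform-in-$R$ factor of $T^{-1/2}$.

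Concretely, since $\la r\ra\approx R$ on $\tilde C_T^R$, the definitions \eqref{ledef} give
\[
\|\phi\|_{L^2(\tilde C_T^R)}\lesssim R^{3/2}\,\|\la r\ra^{-1}\phi\|_{LE(\tilde C_T^R)},\qquad
\|\nabla\phi\|_{L^2(\tilde C_T^R)}\lesssim R^{1/2}\,\|\nabla\phi\|_{LE(\tilde C_T^R)},
\]
and both right-hand sides are controlled by $R^{3/2}\|\phi\|_{LE^1(\tilde C_T^R)}$ and $R^{1/2}\|\phi\|_{LE^1(\tilde C_T^R)}$ respectively. Applying these with $\phi=S^i\Omega^j w$, the $R$ powers cancel the $R^{-3/2}$ and $R^{-1/2}$ factors from \eqref{l2tolinf-r}. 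To apply the second estimate I first commute $S^i\Omega^j\nabla w$ into $\nabla(S^i\Omega^j w)$; because $[\partial_k,S]=-\partial_k$ and $[\partial_k,\Omega_{ij}]$ is again a first-order spatial derivative, the commutator expands into a sum of terms $\nabla(S^{i'}\Omega^{j'}w)$ with $i'+j'\le i+j\le 2$, which stays within the same hierarchy. This produces the uniform-in-$R$ estimate
\[
\|w\|_{L^\infty(C_T^R)}\lesssim T^{-1/2}\sum_{i+j\le 2}\|S^i\Omega^j w\|_{LE^1(\tilde C_T^R)}.
\]

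Finally, $L^\infty$ on a union is the supremum of $L^\infty$ over the pieces, and because the $LE$ norm is itself a supremum over dyadic annuli we have $\|\cdot\|_{LE^1(\tilde C_T^R)}\le\|\cdot\|_{LE^1(\tilde C_T^{<T/2})}$ whenever $\tilde C_T^R\subset\tilde C_T^{<T/2}$. Taking the maximum over dyadic $R<T/4$ thus preserves the $T^{-1/2}$ gain and yields \eqref{letolinf}. There is no substantive obstacle: the proof is simply the bookkeeping verification that the $R$-weights in \eqref{l2tolinf-r} match those in the $LE^1$ norm and that the vector-field commutators close within the family $\{S^i\Omega^j : i+j\le 2\}$.
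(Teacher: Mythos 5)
Your proof is correct and coincides with the paper's (implicit) argument: the paper presents the corollary as an immediate consequence of \eqref{l2tolinf-r} once one notices that on $\tilde C_T^R$ the weight $\la r\ra\approx R$ converts the $L^2$ norms into $LE^1$ norms with exactly the compensating powers $R^{3/2}$ and $R^{1/2}$, and then takes the supremum over dyadic $R$. One small slip: $[\partial_k,S]=+\partial_k$ (equivalently $[S,\partial_k]=-\partial_k$), not $-\partial_k$; this does not affect the argument since all that matters is that the commutator is again a spatial derivative, so the hierarchy $\{\nabla S^{i'}\Omega^{j'}w : i'+j'\le 2\}$ closes.
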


\subsection{The one dimensional reduction}

A main method to obtain pointwise estimates for $u_\alpha$ is by using
the positivity of the fundamental solution to the wave equation in
$3+1$ dimensions and the standard one dimensional reduction. 

%

For solutions to \eqref{boxua} with
vanishing initial data, we may apply time translation invariance and 
assume without loss of generality that $G_\alpha$ is supported in
$C=\{r\le t\}.$  Then 
define
\begin{equation}\label{Hdef}
  H_\alpha (t,r)= \sum_0^2 \|\Omega^i G_\alpha (t, r\omega)\|_{L^2(\S^2)},
\end{equation}
for $G_\alpha$ as in \eqref{boxua}.
By the Sobolev embeddings on the sphere we know that $|G_\alpha|\leq H_\alpha$.
Let $v_\alpha$ be the radial solution to 
\begin{equation}\label{1dbox}
  \Box v_\alpha = H_\alpha, \qquad v_\alpha(0)=\partial_t v_\alpha(0) =0.
\end{equation}
Then we can compare
\[
|u_\alpha| \leq v_\alpha.
\]
 We can rewrite the radial three dimensional equation \eqref{1dbox} as
a one dimensional problem
\[
(\partial_t ^2 - \partial_r ^2)(rv_\alpha) = rH_\alpha
\]
which has the solution
\begin{equation*}
 (rv_\alpha)(t, r) = \frac{1}{2}\int_0^t\int_{|r-t+s|}^{r+t-s}\rho H_\alpha(s,\rho)d\rho ds.
\end{equation*}
Assuming that $H_\alpha$ is supported in the forward cone  $t \geq r$, this 
is rewritten as 
\begin{equation}\label{Hsol}
rv_\alpha(t,r) = \frac12 \int_{D_{tr}} \rho H_\alpha(s,\rho) ds d\rho
\end{equation}
where $D_{tr}$ is the rectangle 
\[
D_{tr}=\{  0 \leq s - \rho \leq t-r, \quad  t-r \leq s+\rho \leq t+r 
 \}.
\]
 
In order to handle the contribution from the intial data in both cases and the fact that \eqref{boxua}
only holds outside of the 
cylinder $\R^+ \times B(0,R_0)$ in Case B,  we modify the above
argument.  The one dimensional reduction is only used to improve 
the bounds on $u_\alpha$ for large $r$. Hence we can truncate 
the functions $u_\alpha$ outside a large ball using a cutoff function $\chi_{out}$
which is identically one for large $r$. Then we can redefine $G_\alpha$
as
\[
G_\alpha = \Box (\chi_{out} u_\alpha).
\]
With this choice for $G_\alpha$ the bound \eqref{Gbound} still holds.  
Moreover by truncating outside of a sufficiently large ball,
$\chi_{out}u_\alpha$ has vanishing initial data.
We can use the one dimensional reduction to obtain bounds
for $u_\alpha$ for large $r$, while for small $r$, we shall rely on
the Sobolev-type estimates of Lemma \ref{l:l2tolinf}.

\subsection{ An initial decay bound}

Here we combine the above one dimensional reduction with the local
energy bounds in order to obtain an initial pointwise decay estimate
for the functions $u_\alpha$. This has the form
\begin{lemma}
The following estimate holds:
\begin{equation}\label{firstdecestderiv}
  |u_{\alpha}|\lesssim \frac{\log \la t-r\ra}{\la r\ra \la t-r\ra^{\frac{1}{2}}}
(\| u_{\leq \alpha + n}\|_{LE^1} + \|\la r \ra f_{\leq \alpha + n}\|_{LE^*})
\end{equation}
\end{lemma}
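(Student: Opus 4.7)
The strategy I propose combines the one-dimensional reduction described in the preceding subsection with the Sobolev embeddings from Lemma \ref{l:l2tolinf}, using $LE^1$-bounds on vector-field derivatives $u_{\leq \alpha+n}$ as the fundamental analytic input. I would treat the regimes $r \lesssim 1$ and $r \gtrsim 1$ separately, since the 1D reduction is designed for the large-$r$ regime while Lemma \ref{l:l2tolinf} handles the compact region cleanly.

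First, in the compact region (say inside $C_T^{R=1}$), I would apply the estimate \eqref{l2tolinf-r} with $w = u_\alpha$ and convert the resulting $L^2$-norms of $S^i\Omega^j u_\alpha$ and $S^i\Omega^j\nabla u_\alpha$ into the $LE^1$-norm of $u_{\leq \alpha+n}$ using the vector-field commutation relations \eqref{poscom}. Since $\la r\ra \sim 1$ and $\la t-r\ra \sim T$ in this regime, the Sobolev bound $|u_\alpha|\lesssim T^{-1/2}\|u_{\leq \alpha+n}\|_{LE^1}$ is already stronger than \eqref{firstdecestderiv}.

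For $r \gtrsim 1$ I plan to use the representation \eqref{Hsol}, having first truncated $u_\alpha$ by $\chi_{out}$ as explained above so that $|u_\alpha| \leq v_\alpha$ with $v_\alpha$ radial and of vanishing data. I would decompose $D_{tr} = \bigcup_R D_{tr}^R$ dyadically in $\rho$, noting by a direct geometric computation that $|D_{tr}^R|\sim R\min(R,t-r)$. On each dyadic piece I apply Cauchy–Schwarz with weights matched to the $LE^*$-structure: for the $\la r\ra^{-3}\Omega^{\leq 2}u$ and $\la r\ra^{-2}\nabla u$ contributions to $G_\alpha$ coming from \eqref{Gbound} I pair a weight like $\rho\la r\ra^{-1/2}$ against $\la r\ra^{1/2}$-weighted copies of the relevant piece of $H_\alpha$, converting the 2D $L^2$-norms into 4D $L^2$-norms through $\rho^2 d\omega d\rho = dx$ and using $|G_\alpha| \leq H_\alpha \lesssim \|\Omega^{\leq 2}G_\alpha\|_{L^2(\S^2)}$. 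The $u$-terms are then controlled by $\|u_{\leq\alpha+n}\|_{LE^1}$; the $f$-term, which lacks the extra $\la r\ra^{-2}$ decay of the other pieces, is controlled by the stronger weighted norm $\|\la r\ra f_{\leq\alpha+n}\|_{LE^*}$.

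Summing the dyadic contributions produces, on the one hand, the $\la t-r\ra^{-1/2}$ decay rate (arising from the thinness of $D_{tr}$ in one null direction after the Cauchy–Schwarz step), and on the other hand, the $\log\la t-r\ra$ factor (arising from the cross-over regime $R \sim \la t-r\ra$ where a logarithmic range of dyadic scales contribute comparably). Dividing by the $r$ on the left of \eqref{Hsol} finally yields \eqref{firstdecestderiv}. The main obstacle, I expect, is the careful bookkeeping required in the Cauchy–Schwarz step: a naive choice of weights produces $\la t-r\ra^{+1/2}$ rather than $\la t-r\ra^{-1/2}$, and one must exploit both the strong $\la r\ra$-decay of the $u$-contributions in \eqref{Gbound} and the weighted structure of $\|\la r\ra f\|_{LE^*}$ to obtain the sharp rate.
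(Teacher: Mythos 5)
Your overall framework — truncate outside a ball, pass to the radial majorant $v_\alpha$ via the positivity of the fundamental solution, decompose $D_{tr}$ dyadically in $\rho$, and apply Cauchy–Schwarz weighted to match the $LE$/$LE^*$ structure — matches the paper's setup. However, there is a genuine gap at the heart of your argument: you never use the information about $S u_{\leq \alpha + n}$, i.e.\ the bound on $SH_\alpha$ in $LE$, and without it the key factor $\la t-r\ra^{-1/2}$ cannot be obtained in the regime $R \ll t-r$.

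Concretely, split as in the paper into the regions $R \lesssim t-r$ and $R \gtrsim t-r$. In the latter, $D_{tr}^R$ has measure $\sim R(t-r)$, and the plain Cauchy–Schwarz estimate
\[
\int_{D_{tr}^R}\rho H \,ds\,d\rho \lesssim R\,|D_{tr}^R|^{1/2}\,\|H\|_{L^2(B_R)} \lesssim R^{-1}\la t-r\ra^{1/2}\,\|\la r\ra^2 H\|_{LE}
\]
does sum correctly to $\la t-r\ra^{-1/2}$; that part of your plan works, and the ``thinness'' you invoke is the relevant gain there. But in the complementary region $R \lesssim t-r$, where $D_{tr}^R$ is a box of side $\sim R$ centered at $s \sim t-r$, the identical computation yields $R^{-1/2}\,\|\la r\ra^2 H\|_{LE}$ per dyadic shell, and the sum over $R \leq (t-r)/8$ is $O(1)$, not $O(\la t-r\ra^{-1/2})$. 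Re-weighting the Cauchy–Schwarz pairing by powers of $\la r\ra$, or splitting $H$ into the $\la r\ra^{-3}u$, $\la r\ra^{-2}\nabla u$, and $f$ pieces and treating them with different weights as you propose, cannot rescue this: the $LE$-type norms control $L^2$ over the full time axis in each spatial annulus, so they carry no information about the localization of $D_{tr}^R$ to the window $s \sim t-r$, which is precisely the source of the extra decay. The paper's mechanism is to apply the fundamental theorem of calculus along the flow of $S$ (parametrized so that $\gamma_{s,\rho}(0) = (s,\rho)$), getting
\[
|H(\gamma_{s,\rho}(0))|^2 \leq \frac1{s}\int_0^{s}|H(\gamma_{s,\rho}(\tau))|^2\,d\tau + \frac1{s}\int_0^{s}|(SH)(\gamma_{s,\rho}(\tau))|^2\,d\tau,
\]
and integrating over $D_{tr}^R$, where $s \sim t-r$. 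This converts the restricted $L^2$ norm on the thin slab into a full $L^2(B_R)$ norm at the cost of a $\sqrt{R/(t-r)}$ factor, which is exactly the $\la t-r\ra^{-1/2}$ gain; the dyadic sum is then uniformly of size $\la t-r\ra^{-1/2}$ and produces the logarithm. You should incorporate the bound on $\|\la r\ra^2 S H_\alpha\|_{LE}$ (which your vector-field commutator bookkeeping already makes available via $\eqref{poscom}$) rather than trying to extract the gain from $\la r\ra$-weights alone.
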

Here and later in this section $n$  represents a  large constant
which does not depend on $\alpha$ but may increase from one subsection
to the next. For the above lemma we can take $n = 25$ for instance,
but later on it becomes tedious and not particularly illuminating to keep track of the 
exact value of $n$.  We shall do so similarly for the enlargements of
the cones $\tilde C^R_T$, $\tilde C^U_T$.  We shall not track, though
we note that only a finite number will ever be required, each
subsequent enlargement which is needed and shall allow the enlargement 
to change from line to line while maintaining the same notation.
 
\begin{proof}
We assume that $r\gg 1$.  For $r \lesssim 1$ we instead 
use directly the Sobolev type embedding \eqref{letolinf}.  
 
For large $r$, we bound $u_\alpha$ by
  $v_\alpha$ and the function $H_\alpha$, using \eqref{Gbound}, by
\[
\| \la r\ra^2 H_\alpha\|_{LE} + \| \la r\ra^2 S H_\alpha\|_{LE}
\lesssim \| u_{\leq \alpha + 25}\|_{LE^1} 
+ \|\la r \ra f_{\leq \alpha  + 25}\|_{LE^*}.
\]
Hence it remains to show that the solution $v_\alpha$ to \eqref{1dbox}
satisfies
\begin{equation}
|v_\alpha| \lesssim \frac{\log \la t-r\ra}{\la r\ra \la t-r\ra^{\frac{1}{2}}}
(\| \la r\ra^2 H_{\alpha}\|_{LE}+ \| \la r\ra^2 S H_{\alpha}\|_{LE}).
\label{h-to-v}\end{equation}


We now prove \eqref{h-to-v}.  The index $\alpha$ plays no role in it
so we drop it.  One can then estimate $|rv|$ as
\begin{equation}\label{west}
  |rv(t,r)| \lesssim  \int_{D_{tr}} \rho H(s,\rho) ds d\rho. 
\end{equation}
We assume that $r \sim t$, as there is no further gain for smaller $r$
in estimating the integral on the right. 

We partition the set $D_{tr}$ into a double dyadic manner
as 
\[
D_{tr} = \bigcup_{R \leq t}  D_{tr}^R, \quad D_{tr}^R = D_{tr}\cap \{R<r<2R\}
\]
and estimate the corresponding parts of the above integral.
We consider two cases:

(i) $R < (t-r)/8$. Then we need to use the information about $Su$.
For any $(s,\rho) \in C$, let $\gamma_{s,\rho}(\tau)$ be the
integral curve corresponding to the vector field $S$, parametrized by
time, satisfying $\gamma_{s,\rho}(0)=(s,\rho)$.  The fundamental theorem of
calculus combined with the Cauchy-Schwarz inequality gives 
\[
 |H(\gamma_{s,\rho}(0))|^2 \le \frac{1}{s}\int_0^s
|H(\gamma_{s,\rho}(\tau))|^2\,d\tau + \frac{1}{s}\int_0^s
|(SH)(\gamma_{s,\rho}(\tau))|^2\,d\tau.
\] 
We apply this for $(s,\rho) \in D_{tr}^R$ and integrate. In the region
$D_{tr}^R$ we have $\rho \sim R$ and $ |s- (t-r)| \lesssim R$;
therefore we obtain
\[
\int_{D_{tr}^R} |H|^2 ds d\rho \lesssim \frac{R}{t-r} \int_{B_R} | H|^2+ |SH|^2 ds d\rho 
\]
where 
\[
B_R = \{(s,\rho):\  R/8 < \rho < 8R\}.
\]
Hence by Cauchy-Schwarz we conclude
\[
\begin{split}
\int_{D_{tr}^R} \rho H  ds d\rho \lesssim & \ \frac{R^\frac52}{\la t-r \ra^{\frac12}}
(\|H\|_{L^2(B_R)} + \|SH\|_{L^2(B_R)}) \\  \lesssim & \  \frac{1}{\la t-r \ra^{\frac12}}
(\|\la r \ra^2  H\|_{LE} + \|\la r \ra^2 SH\|_{LE}).
\end{split}
\]
The logarithmic factor in \eqref{h-to-v} arises in the dyadic $R$ summation.
We note that in the $L^2(B_R)$ norm above $H$ is viewed as a two 
dimensional function, whereas the $LE$ norm applies to $H$ as a 
radial function in $3+1$ dimensions.

(ii) $(t-r)/ 8 < R < t$.  Then we neglect  $SH$ and simply use Cauchy-Schwarz,
\[
\int _{D_{tr}^R} \rho H d\rho ds \lesssim R^\frac32 (t-r)^\frac12  \|H\|_{L^2(B_R)}
\lesssim R^{-1} (t-r)^\frac12 \| \la r \ra^2 H\|_{LE}. 
 \]
The dyadic $R$ summation is again straightforward.
\end{proof}

\subsection{Improved $L^2$ gradient bounds}
The bounds obtained in the previous step for $u_\alpha$ apply as well
to $\nabla u_\alpha$.  However, $u_\alpha$ and $\nabla u_\alpha$ do
not play symmetrical roles in the expressions for $F_\alpha$ and
$G_\alpha$.  In particular, the weights that come with $\nabla
u_\alpha$ are worse than the ones that come with $u_\alpha$. Hence,
when we seek to reiterate and improve the initial pointwise bound
\eqref{firstdecestderiv} it pays to have better bounds for $\nabla
u_\alpha$.  This is the aim of this step in the proof.  Our dyadic
$L^2$ gradient bound is contained in the next lemma:

\begin{lemma}
For $1 \ll U,R \leq T/4$ we have
\begin{equation}
  \| \nabla w\|_{L^2(C_{T}^{R})} \lesssim R^{-1}
 \| w\|_{L^2(\tilde C_{T}^R)}
  + T^{-1} \|S w \|_{L^2(\tilde C_{T}^R)}
  + R \| P w\|_{L^2(\tilde C_{T}^R)}
\label{ctrdu}\end{equation}
respectively
\begin{equation}
  \| \nabla w\|_{L^2(C_{T}^U)} \lesssim U^{-1}
 (\| w\|_{L^2(\tilde C_{T}^U)}
  + \|S w\|_{L^2(\tilde C_{T}^U)})
  + T \| P w\|_{L^2(\tilde C_{T}^U)}
\label{ctudu}\end{equation} \label{l2du}
\end{lemma}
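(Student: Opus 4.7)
The plan is to prove both estimates via a Caccioppoli-type integration by parts. Choose a smooth cutoff $\chi$ equal to $1$ on $C_T^R$ (respectively $C_T^U$) and supported in the enlargement $\tilde C_T^R$ (respectively $\tilde C_T^U$), with $|\partial_t \chi| \lesssim T^{-1}$ and $|\nabla_x \chi| \lesssim R^{-1}$ in case~(i), and $|\nabla_{t,x} \chi| \lesssim U^{-1}$ in case~(ii). Pairing $Pw$ with $\chi^2 w$ in $L^2$ and integrating by parts using $P = \Box + Q$ with $Q \in \Qsr$ (so the contributions of $Q$ are $O(r^{-2})$ relative to those of $\Box$ and, after an additional integration by parts inside the bilinear form $\int \chi^2 w\, Q w$, can be absorbed for $R \gg 1$ or $T \gg 1$), one obtains
\[
\int \chi^2 |\nabla_x w|^2 \;=\; \int \chi^2 (\partial_t w)^2 \;-\; \int \chi^2 w\, Pw \;+\; (\text{cross terms from }\nabla\chi) \;+\; O_Q.
\]
The cross terms are controlled by AM--GM by small multiples of $\int\chi^2 |\nabla w|^2$ plus scale$^{-2}$-weighted $\|w\|^2$ and $\|Sw\|^2$ contributions (using $\partial_t = t^{-1}(S - r\partial_r)$ to rewrite factors of $\partial_t w$ appearing in those cross terms), and $|\int \chi^2 w\, Pw| \lesssim L^{-2}\|\chi w\|^2 + L^2\|\chi Pw\|^2$ for $L = R$ (resp.\ $L = T$) produces the required weight on $\|Pw\|$.

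The remaining task is to control $\int \chi^2 (\partial_t w)^2$. The identity $\partial_t = t^{-1}(S - r\partial_r)$ gives the pointwise bound $(\partial_t w)^2 \leq 2t^{-2}|Sw|^2 + 2(r/t)^2(\partial_r w)^2$. In the region $C_T^R$ the hypothesis $R \leq T/4$ forces $(r/t)^2 \leq 1/4$, so the $(\partial_r w)^2$ term is absorbed into $\int \chi^2 |\nabla_x w|^2$ on the left, and \eqref{ctrdu} follows at once.

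The case of $C_T^U$ is substantially more delicate because $r/t \sim 1$ there and the naive absorption fails. We pass to null coordinates $u_{\pm} = t \pm r$, in which $S = u_+ \partial_{u_+} + u_- \partial_{u_-}$ with $u_+ \sim T$ and $u_- \sim U$. Solving $Sw = u_+ \partial_{u_+} w + u_-\partial_{u_-} w$ for the outgoing derivative yields
\[
|\partial_{u_+} w|^2 \;\lesssim\; T^{-2}|Sw|^2 + (U/T)^2|\partial_{u_-} w|^2,
\]
and hence $(\partial_t w)^2 \lesssim T^{-2}|Sw|^2 + |\partial_{u_-}w|^2$ modulo terms absorbable for $U/T \leq 1/4$. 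What remains is to bound $\int \chi^2 |\partial_{u_-}w|^2$. For this we exploit the null form $\Box = -4 \partial_{u_+}\partial_{u_-} + (2/r)(\partial_{u_+} - \partial_{u_-}) + r^{-2}\Delta_\omega$: testing $Pw$ against the weighted null multiplier $\chi^2 r\, \partial_{u_-} w$ and integrating by parts in $u_+$ produces a positive coefficient of $\int \chi^2 (\partial_{u_-}w)^2$ (the leading $-4\partial_{u_+}\partial_{u_-}$ contribution and the Morawetz-type $(2/r)\partial_{u_-}$ contribution combining favorably), together with a nonnegative angular term $\frac{1}{4}\int \chi^2 r^{-2}|\ang w|^2$. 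Estimating $|\int \chi^2 r\, \partial_{u_-} w \cdot Pw|$ by AM--GM with weight $r \sim T$ gives the $T^2 \|Pw\|^2$ contribution; the cutoff cross-terms $\nabla\chi \cdot \nabla w$ (Minkowski inner product) exhibit the favorable null decomposition $-2(\partial_{u_+}\chi\,\partial_{u_-}w + \partial_{u_-}\chi\,\partial_{u_+}w)$, so after substituting the bound on $\partial_{u_+}w$ they contribute at most $U^{-2}(\|w\|^2 + \|Sw\|^2)$ plus an absorbable $\|\partial_{u_-}w\|$-piece handled by a routine nesting of cutoffs. Combining all ingredients yields $\|\chi\partial_{u_-}w\|_{L^2}^2 \lesssim T^2 \|Pw\|_{L^2}^2 + U^{-2}(\|w\|_{L^2}^2 + \|Sw\|_{L^2}^2)$, from which \eqref{ctudu} follows. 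The main obstacle is this $U$-case: the coupled use of the null structure of $\Box$, the Morawetz-type cancellation from the $(2/r)\partial_{u_-}$ friction term, and the null decomposition of the Minkowski cross terms is essential, and the $T$-weight on $\|Pw\|$ reflects the effective integration length along the $u_+$-direction of $C_T^U$.
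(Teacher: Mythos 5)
Your $C_T^R$ case is sound and close in spirit to the paper's, though the paper handles it more slickly: after the same integration by parts with multiplier $\beta w$, the paper invokes the single pointwise inequality
\[
|\nabla w|^2 \le M (t-r)^{-2} |Sw|^2 + \tfrac{t}{t-r}\bigl(|\nabla_x w|^2 - |\partial_t w|^2\bigr),
\]
valid in all of $C$, which reduces \emph{both} \eqref{ctrdu} and \eqref{ctudu} to the same computation — the only difference between the two cases being $|\Box\beta|\lesssim r^{-2}$ versus $|\Box\beta|\lesssim t^{-1}(t-r)^{-1}$. That algebraic inequality is precisely what packages the scaling field $S$ together with the indefinite Lorentzian density in a way that is uniformly coercive up to the cone, and it is the key idea you are missing. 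Your route for $C_T^R$ essentially reproduces a special case of this inequality (where $r/t$ is small enough for crude absorption), so there it is fine.

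For $C_T^U$ your proposal diverges and I do not believe it closes as written. Two concrete problems. First, the angular term from the multiplier $\chi^2 r\,\partial_{u_-}w$ has the \emph{wrong} sign: integrating by parts in $u_-$ one picks up $\partial_{u_-}(\chi^2 r) = 2\chi\chi_{u_-}r - \tfrac12 \chi^2$ (since $\partial_{u_-} r = -\tfrac12$), so the leading angular contribution is $-\tfrac14\int \chi^2 r^{-2}|\ang w|^2\,dV$, not nonnegative as you claim. That piece would need to be absorbed into the angular part of the Caccioppoli identity, which you do not do. Second, the cutoff cross-terms from the $u_+$ integration by parts are not lower order: on the support of $\chi_{u_+}$ one has $|\chi_{u_+}|\sim T^{-1}$ and $r^3\sim T\cdot r^2$, so $\int\chi\chi_{u_+}r^3(\partial_{u_-}w)^2$ is of the \emph{same} magnitude as the main term $\int\chi^2 r^2(\partial_{u_-}w)^2$; a ``routine nesting of cutoffs'' does not make a same-order error disappear in finitely many steps. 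These issues are exactly what the paper's pointwise inequality sidesteps: it converts the absorption problem into a pointwise algebraic fact and removes the need for a second multiplier identity altogether.
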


Applied to $u_\alpha$ this gives
\begin{corollary}
For $1 \ll U,R \leq T/4$ we have
\begin{equation}
  \| \nabla u_{\alpha}\|_{L^2(C_{T}^{R})} \lesssim R^{-1}
 \| u_{\leq \alpha +n } \|_{L^2(\tilde C_{T}^R)} 
  + R \| f_{\leq \alpha}\|_{L^2(\tilde C_{T}^R)}
\label{ctrdu:va}\end{equation}
respectively
\begin{equation}
  \| \nabla u_\alpha\|_{L^2(C_{T}^U)} \lesssim U^{-1}
 \| u_{\leq \alpha +n }\|_{L^2(\tilde C_{T}^U)}
  + T \|   f_{\leq \alpha}\|_{L^2(\tilde C_{T}^U)}
\label{ctudu:va}\end{equation}
\end{corollary}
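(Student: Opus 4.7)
The plan is to apply Lemma~\ref{l2du} directly with $w = u_\alpha$, using the equation \eqref{pua} (namely $Pu_\alpha = F_\alpha$) together with the pointwise bound \eqref{Fbound} to convert the $\|Pw\|_{L^2}$ term on the right into derivatives of $u_{\leq \alpha + n}$ and $f_{\leq \alpha}$. The remaining two terms in each of \eqref{ctrdu}, \eqref{ctudu} are of the desired form modulo a shift in the multiindex order (for the $Sw$ term) and a routine comparison of the weights $R^{-1}$, $U^{-1}$, $T^{-1}$.

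For \eqref{ctrdu:va}, on $\tilde C_T^R$ one has $r \sim R$, so every weight $\langle r\rangle^{-k}$ in \eqref{Fbound} becomes $R^{-k}$. The first term $R^{-1}\|u_\alpha\|_{L^2(\tilde C_T^R)}$ is already of the desired form. For the second, the identity $Su_\alpha = u_{\alpha + S}$ contributes $9$ to the order, and $R \leq T/4$ gives $T^{-1} \lesssim R^{-1}$, so this term is absorbed into $R^{-1}\|u_{\leq \alpha+n}\|_{L^2(\tilde C_T^R)}$. In the third term $R\|F_\alpha\|_{L^2}$, the bound \eqref{Fbound} produces contributions of type $R^{-2}\|\Omega^2 u_{<|\alpha|-6}\|$, $R^{-2}\|u_{<|\alpha|}\|$, $R^{-1}\|\nabla^2 u_{<|\alpha|}\|$, $R^{-1}\|\nabla u_{<|\alpha|}\|$, and $R\|f_{\leq \alpha}\|$. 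Each spatial derivative of $u_\beta$ is some $u_{\beta'}$ with $|\beta'| \leq |\beta| + 1$, and $\Omega^2 u_{<|\alpha|-6}$ lies in $u_{<|\alpha|}$ thanks to the weighting $|\Omega| = 3$; thus everything except the $f$ term collapses into $R^{-1}\|u_{\leq \alpha + n}\|_{L^2(\tilde C_T^R)}$, yielding \eqref{ctrdu:va}.

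The argument for \eqref{ctudu:va} is essentially the same, with $r \sim T$ on $\tilde C_T^U$ (since $t-r \sim U \leq T/4$ and $t \sim T$). Applying \eqref{ctudu} to $w = u_\alpha$ produces $U^{-1}\|u_\alpha\|_{L^2} + U^{-1}\|Su_\alpha\|_{L^2} + T\|F_\alpha\|_{L^2}$; the first two combine into $U^{-1}\|u_{\leq \alpha+n}\|_{L^2(\tilde C_T^U)}$, and the third, after applying \eqref{Fbound} with $\langle r\rangle^{-k} \sim T^{-k}$, becomes $T^{-1}\|u_{\leq \alpha+n}\|_{L^2(\tilde C_T^U)} + T\|f_{\leq \alpha}\|_{L^2(\tilde C_T^U)}$. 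Since $U \leq T/4$ gives $T^{-1} \lesssim U^{-1}$, this yields the desired estimate.

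There is no genuine obstacle here beyond careful bookkeeping: one must choose $n$ large enough to absorb the order shifts produced by $S$, $\Omega^2$, and $\nabla^2$, and enlarge $\tilde C_T^R$ and $\tilde C_T^U$ slightly beyond the enlargements used in Lemma~\ref{l2du} so that the weight identifications $r \sim R$ and $r \sim T$ hold uniformly on the relevant sets.
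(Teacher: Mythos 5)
Your proof is correct and is exactly the argument the paper intends: the corollary is stated in the paper with no explicit proof beyond the phrase "Applied to $u_\alpha$ this gives," and what is meant is precisely to take $w = u_\alpha$ in Lemma~\ref{l2du}, replace $Pu_\alpha$ by $F_\alpha$ via \eqref{pua}, invoke the pointwise bound \eqref{Fbound} with the weight identifications $\la r\ra \sim R$ on $\tilde C_T^R$ and $\la r\ra \sim T$ on $\tilde C_T^U$, and then absorb the $Su_\alpha$, $\nabla u$, $\nabla^2 u$, and $\Omega^2 u$ contributions into $u_{\leq \alpha+n}$ using $T^{-1}\lesssim R^{-1}$ (resp. $T^{-1}\lesssim U^{-1}$) and the freedom in $n$ and in enlarging $\tilde C_T^R$, $\tilde C_T^U$. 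Your bookkeeping of the weights and multi-index shifts is accurate.
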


Applied to $\nabla u_\alpha$ we also obtain

\begin{corollary}
For $1 \ll U,R \leq T/4$ we have
\begin{equation}
  \| \nabla^2 u_{\alpha}\|_{L^2(C_{T}^{R})} \lesssim R^{-1}
 \| \nabla u_{\leq \alpha +n } \|_{L^2(\tilde C_{T}^R)} 
  + R \| \nabla f_{\leq \alpha}\|_{L^2(\tilde C_{T}^R)}
\label{ctrdu:dva}\end{equation}
respectively
\begin{equation}
  \| \nabla^2 u_\alpha\|_{L^2(C_{T}^U)} \lesssim U^{-1}
 \| \nabla u_{\leq \alpha +n }\|_{L^2(\tilde C_{T}^U)}
  + T \| \nabla  f_{\leq \alpha}\|_{L^2(\tilde C_{T}^U)}
\label{ctudu:dva}\end{equation}
\end{corollary}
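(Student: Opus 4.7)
The strategy is to apply Lemma~\ref{l2du} with $w=\partial u_\alpha$ for each translation $\partial\in T$, and then sum on $\partial$, so that the left-hand side of the lemma yields $\|\nabla^{2}u_\alpha\|_{L^{2}(C_T^R)}$ (respectively $\|\nabla^{2}u_\alpha\|_{L^{2}(C_T^U)}$). It remains to control the three quantities $\|w\|$, $\|Sw\|$ and $\|Pw\|$ that arise on the right of the lemma. The first two are disposed of immediately: because $[S,\partial_\mu]=-\partial_\mu$ for every $\partial_\mu\in T$, one has $S\partial u_\alpha = \partial(Su_\alpha)-\partial u_\alpha$, so both $\|\partial u_\alpha\|_{L^{2}}$ and $\|S\partial u_\alpha\|_{L^{2}}$ are majorized by $\|\nabla u_{\leq\alpha+n}\|_{L^{2}}$ for a suitable constant $n$, and since $R,U\leq T/4$ the $T^{-1}$ prefactor in \eqref{ctrdu} is absorbed into $R^{-1}$, matching the first term on the right of the target estimates.

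The heart of the argument is the bound for $\|P\partial u_\alpha\|_{L^{2}}$. Combining the commutator $[P,\partial]\in\Qsr$ from \eqref{ptcom} with the identity $Pu_\alpha = F_\alpha = Qu_{<|\alpha|}+f_{\leq|\alpha|}$ from \eqref{pua}, and commuting the extra $\partial$ past the inner $\Qsr$ operator (where $[\partial,\Qsr]\subset\Qsr$), one arrives at the pointwise bound
\[
|P\partial u_\alpha|\;\lesssim\; |\nabla f_{\leq|\alpha|}| + \la r\ra^{-2}\bigl(|\nabla^{2}u_{\leq|\alpha|}|+|\nabla u_{\leq|\alpha|}|\bigr) + \la r\ra^{-3}|u_{\leq|\alpha|}|.
\]
On $\tilde C_T^R$ one has $\la r\ra\sim R$, while on $\tilde C_T^U$, where $U\leq T/4$ forces $r\gtrsim T$, one has $\la r\ra\sim T$. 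Multiplying by $R$ (respectively $T$) sends the $\nabla f$ contribution directly into $R\|\nabla f_{\leq\alpha}\|$ (resp.\ $T\|\nabla f_{\leq\alpha}\|$), and places the $\nabla^{2}u$ and $\nabla u$ pieces inside $R^{-1}\|\nabla u_{\leq\alpha+n}\|$ (resp.\ $U^{-1}\|\nabla u_{\leq\alpha+n}\|$) on appropriately enlarged cones, as dictated by the right-hand sides of \eqref{ctrdu:dva} and \eqref{ctudu:dva}.

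The one snag, and the main obstacle, is the zero-order piece $\la r\ra^{-3}|u|$, which after scaling produces a term of the form $R^{-2}\|u_{\leq|\alpha|}\|_{L^{2}(\tilde C_T^R)}$ (respectively $T^{-2}\|u_{\leq|\alpha|}\|_{L^{2}(\tilde C_T^U)}$) that is not literally a constituent of the target right-hand side. I would dispose of it via a localized Hardy inequality on the annular frustum $\tilde C_T^R$ (or $\tilde C_T^U$), where $r\sim R$ (resp.\ $r\sim T$), converting one factor of $\la r\ra^{-1}$ on $u$ into a spatial derivative at the price of a slightly larger enlargement of the cone. Summing over $\partial\in T$ and collecting the resulting pieces then yields \eqref{ctrdu:dva} and \eqref{ctudu:dva}.
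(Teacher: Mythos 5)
Your overall strategy — apply Lemma~\ref{l2du} to $w=\partial u_\alpha$ for each translation $\partial\in T$, then sum — is exactly what the paper intends (the paper gives no proof, just the sentence ``applied to $\nabla u_\alpha$ we also obtain''), and your disposal of the $\|w\|$ and $\|Sw\|$ terms via $[S,\partial]=-\partial$ is fine. However, there is a genuine gap in the treatment of the zero-order piece of $P\partial u_\alpha$.

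First, a minor sharpening: your coefficient $\la r\ra^{-3}$ is not the best one can get. The only true zero-order contribution comes from the potential $V\in S^Z(r^{-3})$, and when $\partial$ is commuted through one lands on $\partial V\in S^Z(r^{-4})$ acting on $u$ (the piece $V\partial u$ is first-order), both in $[\partial,P]u_\alpha$ and in $\partial F_\alpha$. So the residue is $\la r\ra^{-4}|u_{\leq|\alpha|}|$, giving $R^{-3}\|u_{\leq|\alpha|}\|_{L^2(\tilde C_T^R)}$ after multiplying by $R$, not $R^{-2}$. This does not remove the difficulty, but it is worth noting that a direct appeal to \eqref{Fbound} is lossier than the actual commutator structure.

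Second, and more seriously: the ``localized Hardy inequality on the annular frustum'' you invoke to convert $R^{-1}\|u\|_{L^2(\tilde C_T^R)}$ into $\|\nabla u\|_{L^2(\tilde C_T^R)}$ is not a valid inequality. Hardy requires either integration over a region reaching the origin or a vanishing boundary condition; on $\tilde C_T^R$ with $r\sim R\gg 1$ a nonzero constant $u$ has $\nabla u=0$ while $\|u\|_{L^2}\neq 0$, so the inequality fails, and enlarging the frustum ``at its own scale'' keeps $r\gtrsim R$ and the counterexample survives. The same objection applies to $\tilde C_T^U$ with $r\sim T$. So the zero-order residue cannot be absorbed the way you propose. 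In fact, the Corollary as printed appears to be slightly imprecise: a careful bookkeeping leaves an additive lower-order term $R^{-3}\|u_{\leq\alpha+n}\|_{L^2(\tilde C_T^R)}$ (resp.\ $T^{-3}\|u_{\leq\alpha+n}\|_{L^2(\tilde C_T^U)}$) on the right. This is harmless where the Corollary is actually used — in Propositions~\ref{p:pointr} and~\ref{p:pointu} the right-hand sides are anyway dominated by $\|u_{\leq m+n}\|_{LE^1}$ or $\|u_{\leq m+n}\|_{L^2}$, which control this residue — but your proof as written cannot close the estimate to the literal statement by Hardy alone.
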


\begin{proof}[Proof of Lemma \ref{l2du}]
To keep the ideas clear we first prove the lemma with $P$ replaced by $\Box$.
We consider a cutoff function $\beta$ supported in 
$\tilde C_{T}^{R}$ which equals $1$ on $C_{T}^{R}$.
Integration by parts gives
\begin{equation}
\int \beta( |\nabla_x w|^2 - |\partial_t w|^2) dx dt
= \int \Box w \cdot \beta w dxdt -\frac12  \int (\Box \beta) w^2 dx dt.
\label{inp}\end{equation}
To estimate $\nabla w$ we use the pointwise inequality
\[
|\nabla w|^2 \le M \frac{1}{(t-r)^{2}} |Sw|^2 + \frac{t}{t-r} 
(|\nabla_x w|^2 - |\partial_t w|^2)
\]
which is valid inside the cone $C$ for a fixed large $M$.
Hence 
\[
\int \beta|\nabla w|^2 dx dt \lesssim \int \frac{1}{(t-r)^{2}} \beta |Sw|^2
 +  \frac{t}{t-r}|\Box \beta| w^2  + \frac{t}{t-r}
 \beta |\Box w| |w| dxdt
\]
where all weights have a fixed size in the support of $\beta$.
The function $\beta$ can be further chosen so that $|\Box \beta| \lesssim r^{-2}$.
Then the conclusion of the lemma follows by applying Cauchy-Schwarz
to the last term. The argument for $C_{T}^U$ is similar, with the only difference 
that now we have $ |\Box \beta| \lesssim t^{-1}(t-r)^{-1}$.

Now consider the above proof but with $\Box$ replaced by $P$. 
Then, given the form of $P$ in \eqref{P}, \eqref{can-p-sr}, the relation \eqref{inp} is modified
as follows:
\begin{equation*}
\begin{split}
\int \beta( |\nabla_x w|^2 - |\partial_t w|^2) dx dt
= & \  \int P w \cdot \beta w dxdt -\frac12  \int ((P+V) \beta) w^2 dx dt 
\\ & \ + \int O(r^{-1}) \beta |\nabla w|^2 dx dt.
\end{split}
\end{equation*}
The bound for $(P+V) \beta$ is similar to the bound for $\Box \beta$, and 
one can easily see that the last error term is harmless.  The proof of the lemma 
is concluded.
\end{proof}

\subsection{Improved $L^2$ bounds for small $r$.}

A very unsatisfactory feature of our first pointwise bound
\eqref{firstdecestderiv} is the $r^{-1}$ factor which is quite bad for
small $r$. Here we devise a mechanism which allows us to replace this
factor by $t^{-1}$. Our main bound is an $L^2$ local energy bound,
derived using the stationary local energy decay assumption. 
We have

\begin{proposition} \label{lv:smallr}
  Assume that the problem \eqref{box} satisfies stationary local
  energy decay bounds \eqref{sle}.  Then the following estimates hold:
  \begin{equation}
    \| u_{\leq m} \|_{LE^1(C_{T}^{<T/2})} \lesssim
T^{-1} \| \la r \ra  u_{\leq m+n}\|_{LE^1( C_{T}^{<T/2})}
+   \|f_{\leq m+n} \|_{LE^*( C_{T}^{<T/2})}. 
    \label{smallr:main}
  \end{equation}
\end{proposition}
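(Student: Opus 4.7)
The plan is to apply the stationary local energy decay assumption \eqref{sle} to a spacetime-localized version of each $u_\alpha$, and to convert the resulting $\partial_t u$ term on the right into a $T^{-1}$ gain by using the pointwise identity $\partial_t = t^{-1}(S - x\cdot\nabla)$, which is effective on the slab $\{t \sim T\}$.

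I would introduce a smooth cutoff $\psi(t,x) = \eta(t/T)\phi(r/T)$ with $\eta \in C_c^\infty((1/2, 5/2))$ equal to $1$ on $[1,2]$ and $\phi \in C_c^\infty([0, 1/2))$ equal to $1$ on $[0, 1/4]$; then $\psi \equiv 1$ on $C_T^{<T/2}$, $\mathrm{supp}\,\psi \subset \tilde C_T^{<T/2}$, and $|\partial^j \psi| \lesssim T^{-j}$. For each $|\alpha|\leq m$, set $v_\alpha = \psi u_\alpha$; by \eqref{pua},
\[
P v_\alpha = \psi F_\alpha + [P, \psi] u_\alpha.
\]
Applying \eqref{sle} to $v_\alpha$ on a time slab strictly containing $\mathrm{supp}\,\psi$, all boundary data terms vanish, yielding $\|v_\alpha\|_{LE^1} \lesssim \|P v_\alpha\|_{LE^*} + \|\partial_t v_\alpha\|_{LE}$. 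Since $u_\alpha = v_\alpha$ on $C_T^{<T/2}$, summing over $|\alpha|\leq m$ reduces the proof to bounding the right-hand side by the desired quantity.

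For the time-derivative term, I split $\partial_t v_\alpha = (\partial_t \psi) u_\alpha + \psi \partial_t u_\alpha$. On $\mathrm{supp}\,\psi$ one has $t \sim T$, so $\partial_t u_\alpha = t^{-1}(S u_\alpha - x\cdot\nabla u_\alpha)$ gives the pointwise bound $|\psi \partial_t u_\alpha| \lesssim T^{-1}(|Su_\alpha| + \la r\ra |\nabla u_\alpha|)$. Together with $|(\partial_t \psi) u_\alpha| \lesssim T^{-1}|u_\alpha|$ and the inclusions $\|u\|_{LE}, \|\la r\ra \nabla u\|_{LE} \lesssim \|\la r\ra u\|_{LE^1}$, this yields
\[
\|\partial_t v_\alpha\|_{LE} \lesssim T^{-1}\|\la r\ra u_{\leq |\alpha|+9}\|_{LE^1},
\]
using that $S$ has grading $9$ in our convention for $|\alpha|$. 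The commutator $[P,\psi] u_\alpha$ comprises terms of the form $(\nabla\psi)(\nabla u_\alpha)$, $(\Box\psi) u_\alpha$, and lower-order contributions from $Q$; each $\psi$-derivative supplies a factor $T^{-1}$ and restricts support to the region where $\la r\ra \sim T$ or $t \sim T$. A direct weight computation, combining $|\nabla\psi|\lesssim T^{-1}$ with $\la r\ra \sim T$ on that support, gives $\|[P,\psi] u_\alpha\|_{LE^*} \lesssim T^{-1}\|\la r\ra u_\alpha\|_{LE^1}$. The source $\psi F_\alpha$ is controlled via \eqref{Fbound}, producing $\|f_{\leq m}\|_{LE^*}$ plus terms involving $u_{<m}$ on $\tilde C_T^{<T/2}$; these $u_{<m}$ contributions are removed by induction on $m$, starting from the trivial base case $m=0$, at the cost of the finite vector-field loss $n$ in the final statement.

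The main obstacle is the weight bookkeeping in the commutator estimate: because $\nabla\psi$ is supported only where $\la r\ra \sim T$, the $T^{-1}$ arising from a cutoff derivative is effectively only $\la r\ra^{-1}$, and a genuine $T^{-1}$ improvement can only be extracted after reconciling the $\la r\ra^{-1/2}$ measure in $LE$ and the $\la r\ra^{1/2}$ measure in $LE^*$ against the $L^2$ measure over the $T$-wide annulus. This is precisely why the weight $\la r\ra$ on the right of \eqref{smallr:main} is essential: without it one could only hope for $\|u_{\leq m+n}\|_{LE^1}$ on the right, which would not yield any decay.
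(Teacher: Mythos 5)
Your cutoff strategy is a genuinely different route from the paper: the paper applies \eqref{sle:vf} directly on $[T,2T]$, keeps the boundary data terms $\|\nabla u_{\leq m}(T)\|_{L^2}$, $\|\nabla u_{\leq m}(2T)\|_{L^2}$, and controls them by a fundamental-theorem-of-calculus argument along the integral curves of $S$ (eq.~\eqref{vfromsv}); moreover the paper needs a \emph{two-step} argument, first reaching $T^{-1/2}\|\nabla u_{\leq m+n}\|_{L^2(C_T)}$ via \eqref{sle:vf}, then upgrading it using the differently-weighted bound \eqref{sle:vfb}. Your approach trades the boundary data for a commutator, which in principle collapses both steps into one. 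That is an attractive simplification.

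However, there is a genuine gap in the commutator estimate. Your final paragraph asserts that ``$\nabla\psi$ is supported only where $\la r\ra\sim T$,'' which is false: with $\psi=\eta(t/T)\phi(r/T)$ the temporal derivative $\partial_t\psi=T^{-1}\eta'(t/T)\phi(r/T)$ is supported on the boundary time slabs $\{t\sim T\}$ across \emph{all} dyadic annuli $R\lesssim T$, not just $R\sim T$. The term $(\partial_t\psi)(\partial_t u_\alpha)$ in $[P,\psi]u_\alpha$ thus contributes to every dyadic piece of the $LE^*$ sum, and the bare $T^{-1}$ from the cutoff derivative is not enough: naively one loses a factor $\log T$ comparing the $\sum_R$ in $LE^*$ to the $\sup_R$ in $LE$. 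The remedy is exactly the device you already use for $\partial_t v_\alpha$: replace $\partial_t u_\alpha = t^{-1}(S-x\cdot\nabla)u_\alpha$ inside the commutator too, gaining a second $T^{-1}$, after which the geometric sum $\sum_{R\lesssim T}R\lesssim T$ turns the $LE^*$ dyadic sum into the desired $T^{-1}\|\la r\ra u_{\leq m+n}\|_{LE^1}$. As written, the ``direct weight computation'' you invoke only covers the spatial part of the cutoff derivative, so the claimed bound $\|[P,\psi]u_\alpha\|_{LE^*}\lesssim T^{-1}\|\la r\ra u_\alpha\|_{LE^1}$ is not justified. Once that step is filled in, the rest of your scheme (vanishing boundary terms, treatment of $\partial_t v_\alpha$, induction on $|\alpha|$ through $\psi F_\alpha$) is sound, though you should also take $\phi\equiv 1$ on $[0,1/2]$ rather than on $[0,1/4]$ so that $\psi\equiv 1$ on all of $C_T^{<T/2}$.
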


\begin{proof} 
  We first observe that we can harmlessly truncate $u$ to
  $C_{T}^{<T/2}$.  We will make this assumption throughout. Applying
  the stationary local energy decay estimate \eqref{sle:vf} for $u_{\leq m}$
  we obtain
\[
 \begin{split}
    \|u_{\leq m} \|_{LE^1(C_T)} \lesssim & \     
\|\nabla u_{\leq m}(T)\|_{L^2} +
    \|\nabla u_{\leq m}(2T)\|_{L^2} \\ & \
 +\|f_{\leq m}\|_{LE^*(C_T)} + \|\partial_t u_{\leq m} \|_{LE(C_T)}.
  \end{split}
\]
 For the last term on the right, due to the support of $u$, we have
  \[
\begin{split}
\|\partial_t u_{\leq m} \|_{LE(C_T)} \lesssim &\ \frac{1}{T} (\|S u_{\leq m}\|_{LE(C_T)} +  
\| r \partial_r u_{\leq m} \|_{LE(C_T)})
\\ 
\lesssim &\ T^{-\frac12} (\| \la r \ra^{-1} S u_{\leq m} \|_{L^2} +  \| \nabla  u_{\leq m} \|_{L^2})
\\ 
\lesssim &\  T^{-\frac12}(\| \nabla S u_{\leq m} \|_{L^2} +  \| \nabla  u_{\leq m} \|_{L^2})
\end{split}
\] 
where a Hardy inequality was used in the last step.  Next we consider
the time boundary terms.  By the fundamental theorem of calculus and
the Cauchy-Schwarz inequality, for any $s > 0$ we have
  \begin{equation}
|\nabla u(T,x)|^2\lesssim \frac{1}{s}\int_0^{s} 
|(\nabla u)(\gamma_{T,x}(\tau))|^2\,d\tau + \frac{s}{T^2}\int_0^{s} |(S\nabla 
  u)(\gamma_{T,x}(\tau))|^2\,d\tau.
\label{vfromsv}\end{equation}
This holds uniformly with respect to $s$, so we have the freedom to choose 
$0 < s \leq T$ favourably depending on $x$.  Suppose we take $s=T$.
Integrating the above estimate over $|x|\le  T$ for this choice of $s$
  and applying a change of coordinates yields
  \[
\int_{C_T \cap \{t = T\}} |\nabla u_{\leq m}(T)|^2\,dx
    \lesssim T^{-1} \int_{C_T} |\nabla u_{\leq m} |^2 + |S \nabla u_{\leq m} |^2 \,dx\,dt.
  \]
 A similar bound holds for $\nabla u_{\le m}(2T)$. Combining the last three estimates we 
obtain
\begin{equation}
    \| u_{\leq m}\|_{LE^1(C_{T})} \lesssim
    \|f_{\leq m+n}\|_{LE^*( C_{T})} + 
T^{-\frac12} \| \nabla u_{\leq m+n}\|_{L^2( C_{T})}
    \label{smallr:a}
  \end{equation}
which brings us half of the way to the proof of \eqref{smallr:main}.

We now consider the second expression on the right above, and we estimate it
via the same argument as before, but using \eqref{sle:vfb} instead of 
\eqref{sle:vf}. Indeed,  \eqref{sle:vfb} yields
\[ 
\begin{split}
    \|\nabla u_{\leq m} \|_{L^2(C_T)} \lesssim &\  \|\la r \ra f_{\leq m} \|_{L^2(C_T)} +   
\|\la r \ra^\frac12 \nabla u_{\leq m}(T)\|_{L^2} +
    \|\la r \ra^\frac12 \nabla u_{\leq m}(2T)\|_{L^2} \\ & \ + 
\|\partial_t u_{\leq m} \|_{L^2(C_T)} .
  \end{split}
\]
The last term is controlled as above by 
\[
\begin{split}
\|\partial_t u_{\leq m} \|_{L^2(C_T)} \lesssim &\  T^{-1} ( \|S u_{\leq m} \|_{L^2(C_T)}+ 
\|r \nabla u_{\leq m} \|_{L^2(C_T)})
\\
\lesssim & T^{-\frac12} ( \|\la r \ra  S u_{\leq m} \|_{LE^1(C_T)}+ 
\|\la r \ra  u_{\leq m} \|_{LE^1(C_T)}).
\end{split}
\]
The initial and final terms are estimated by a more careful use of \eqref{vfromsv}.
Namely, we integrate  \eqref{vfromsv} with $s(x) = \la r \ra^\frac12 T^\frac12$.
This yields
 \[
\begin{split}
\int \la r \ra | \nabla u_{\leq m}(T)|^2\,dx
    \lesssim & \ T^{-\frac12} \int_{C_T} \la r \ra^\frac12 | \nabla u_{\leq m} |^2 
+  T^{-\frac32} \int_{C_T}  \la r \ra^\frac32 |S \nabla u_{\leq m} |^2 \,dx\,dt 
 \end{split}
\]
which implies that
\[
\| \la r \ra^\frac12 u_{\leq m}(T)\|_{L^2} \lesssim  
T^{-\frac14} \|\la r \ra^\frac14 \nabla u_{\leq m}\|_{L^2(C_T)}
+ T^{-\frac12} \|\la r \ra S u_{\leq m}\|_{LE^1(C_T)}.
\]
Combining the last three bounds leads to 
\[
\begin{split}
  \| \nabla u_{\leq m}\|_{L^2(C_{T})} \lesssim & \
    \|\la r\ra  f_{\leq m+n} \|_{L^2( C_{T})} + 
T^{-\frac12} \| \la r \ra  u_{\leq m+n}\|_{LE^1( C_{T})} \\ & \ 
 +T^{-\frac14} \|\la r \ra^\frac14 \nabla u_{\leq m}\|_{L^2(C_T)}.
\end{split}
\]
We can discard the last term on the right by absorbing it into the left hand side for $r \ll T$
and into the second right hand side term for $r \sim T$. Thus we obtain
\begin{equation}
    \| \nabla u_{\leq m}\|_{L^2(C_{T})} \lesssim
    \|\la r\ra  f_{\leq m+n}\|_{L^2( C_{T})} + 
T^{-\frac12} \| \la r \ra u_{\leq m+n}\|_{LE^1( C_{T})}.
  \label{smallr:b}
  \end{equation}
Finally, the bound \eqref{smallr:main} is obtained by combining
\eqref{smallr:a} and \eqref{smallr:b}.
\end{proof}

\subsection{Improved pointwise bounds for 
$u_\alpha$ for small $r$}

Here we use Sobolev type  inequalities to make the transition from $L^2$
local energy bounds to pointwise bounds for small $r$. Our main estimate 
has the form 
\begin{proposition}
We have
\begin{equation}
\begin{split}
\| u_{\leq m} \|_{L^\infty ( C_{T}^{<T/2})}
& \  +\| \la r \ra \nabla u_{\leq m} \|_{L^\infty ( C_{T}^{<T/2})}  \lesssim 
T^{-\frac32} \|  u_{\leq m+n} \|_{LE( \tilde C_{T}^{<T/2})} 
\\ & \ + T^{-\frac12} \Bigl(\|f_{\leq m+n}\|_{LE^*( \tilde C_{T}^{<T/2})}+ 
\|\la r \ra^2 \nabla f_{\leq m+n}\|_{LE( \tilde C_{T}^{<T/2})}\Bigr).
\end{split}
\label{l2toli}\end{equation}
\label{p:pointr}
\end{proposition}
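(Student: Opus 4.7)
The strategy is to combine three ingredients already at hand: the Sobolev-type embeddings of Lemma \ref{l:l2tolinf} and Corollary \ref{l:letolinf}, which convert $L^2$ bounds into $L^\infty$ bounds with a $T^{-1/2}$ gain on $C_T^{<T/2}$; the refined local energy bound of Proposition \ref{lv:smallr}, which trades $\|u\|_{LE^1}$ for $\|\la r \ra u\|_{LE^1}$ at the price of a $T^{-1}$; and the gradient bound of Lemma \ref{l2du}, which exchanges $\nabla u$ for $u$ (or $\nabla^2 u$ for $\nabla u$) with an $R^{-1}$ factor on each dyadic shell $C_T^R$. The extra weight $\la r \ra$ in Proposition \ref{lv:smallr} is absorbed via the fact that $R \leq T$ on $C_T^{<T/2}$, producing the desired $T^{-3/2}$ prefactor on $\|u\|_{LE}$.

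For the bound on $u_{\leq m}$ I apply Corollary \ref{l:letolinf}, followed by Proposition \ref{lv:smallr}, to get
\[
\|u_{\leq m}\|_{L^\infty(C_T^{<T/2})} \lesssim T^{-\tfrac12}\|u_{\leq m+n}\|_{LE^1} \lesssim T^{-\tfrac32}\|\la r \ra u_{\leq m+n}\|_{LE^1} + T^{-\tfrac12}\|f_{\leq m+n}\|_{LE^*}.
\]
Distributing the gradient, $\|\la r \ra u\|_{LE^1} \lesssim \|u\|_{LE} + \|\la r \ra \nabla u\|_{LE}$; it remains to estimate the last term. Multiplying \eqref{ctrdu:va} by $R^{1/2}$ on a shell $C_T^R$ with $R \leq T/4$,
\[
R^{\tfrac12}\|\nabla u_\alpha\|_{L^2(C_T^R)} \lesssim R^{-\tfrac12}\|u_{\leq \alpha+n}\|_{L^2(\tilde C_T^R)} + R^{\tfrac32}\|f_{\leq \alpha}\|_{L^2(\tilde C_T^R)} \lesssim \|u_{\leq \alpha+n}\|_{LE} + T\|f_{\leq \alpha}\|_{LE^*},
\]
where I use $R^{-1/2}\|u\|_{L^2(C_T^R)} \lesssim \|u\|_{LE}$ and $R^{3/2}\|f\|_{L^2(C_T^R)} = R\cdot R^{1/2}\|f\|_{L^2(C_T^R)} \leq R\|f\|_{LE^*} \leq T\|f\|_{LE^*}$. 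Taking the sup over $R$ and combining yields the pointwise bound on $u_{\leq m}$; as a byproduct, one obtains the auxiliary estimate $\|\nabla u_{\leq m}\|_{LE} \lesssim T^{-1}\|u_{\leq m+n}\|_{LE} + \|f_{\leq m+n}\|_{LE^*}$, which is reused below.

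For the bound on $\la r \ra \nabla u_{\leq m}$, I apply the dyadic Sobolev estimate \eqref{l2tolinf-r} to $\nabla u_{\leq m}$ on $C_T^R$ and multiply by $R$, obtaining
\[
R\|\nabla u_{\leq m}\|_{L^\infty(C_T^R)} \lesssim T^{-\tfrac12}\|\nabla u_{\leq m+n}\|_{LE(\tilde C_T^R)} + T^{-\tfrac12} R^{\tfrac12}\|\nabla^2 u_{\leq m+n}\|_{L^2(\tilde C_T^R)}.
\]
The Hessian term is handled by the corollary \eqref{ctrdu:dva}, which gives $R^{1/2}\|\nabla^2 u_\alpha\|_{L^2(C_T^R)} \lesssim \|\nabla u\|_{LE} + R^{3/2}\|\nabla f\|_{L^2(C_T^R)} \leq \|\nabla u\|_{LE} + \|\la r \ra^2 \nabla f\|_{LE}$. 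Inserting the auxiliary bound on $\|\nabla u_{\leq m+n}\|_{LE}$ from the previous step then produces the claimed estimate.

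The main bookkeeping difficulty is to ensure that the stray factor of $T$ appearing in $R^{3/2}\|f\|_{L^2(C_T^R)} \lesssim T\|f\|_{LE^*}$ is exactly cancelled by the $T^{-1}$ gained from Proposition \ref{lv:smallr}, so that the forcing term contributes only at the level $T^{-1/2}\|f\|_{LE^*}$ and the $T^{-3/2}$ gain is preserved on $\|u\|_{LE}$. The incremental loss of vector fields at each application (Sobolev, Proposition \ref{lv:smallr}, Lemma \ref{l2du}) is harmlessly absorbed into the constant $n$.
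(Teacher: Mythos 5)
Your proposal is correct and follows essentially the same route as the paper's proof: both combine Corollary~\ref{l:letolinf} (Sobolev embedding), Proposition~\ref{lv:smallr} (the $T^{-1}$ gain), and the corollaries \eqref{ctrdu:va}, \eqref{ctrdu:dva} of Lemma~\ref{l2du}, with the factor $R\le T$ on $C_T^{<T/2}$ converting the weighted local-energy quantities back to $LE$, $LE^*$, and $\la r\ra^2\nabla f$ in $LE$. The only organizational difference is that you apply the dyadic Sobolev estimate separately to $u$ and to $\nabla u$, whereas the paper reduces both pointwise bounds to a single $LE^1$-level inequality before applying the embedding; the substance and the bookkeeping of the $T$-powers are identical.
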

\begin{proof}

  In view of the Sobolev bound \eqref{letolinf} in
  Corollary~\ref{l:letolinf}, the estimate \eqref{l2toli} would follow
  from
\begin{equation}
\begin{split}
\| u_{\leq m} \|_{LE^1 ( C_{T}^{<T/2})} & \ 
+\| \la r \ra \nabla u_{\leq m} \|_{LE^1 ( C_{T}^{<T/2})} \lesssim
T^{-1} \|  u_{\leq m+n} \|_{LE( \tilde C_{T}^{<T/2})} 
\\ & \ + \| f_{\leq m+n}\|_{LE^*(\tilde C_T^{<T/2})} + \| \la r \ra^2 \nabla f_{\leq
  m+n}\|_{LE(\tilde C_{T}^{<T/2})}.
\end{split}
\label{l2toli:a}\end{equation}
To prove this we begin with the local energy bound \eqref{smallr:main}
and add to it corresponding bounds for the second derivatives of
$u_{\leq m}$.  For that we use \eqref{ctrdu:dva}; applied uniformly in dyadic
regions $C_T^R$ with $R \lesssim T$ it yields
\[
\|\la r \ra \nabla^2 u_{\leq m}\|_{LE(C_{T}^{<T/2})} \lesssim 
\|  u_{\leq m+n}\|_{LE^1(C_{T}^{<T/2})} + \| \la r \ra^2 \nabla f_{\leq m+n}\|_{LE(C_{T}^{<T/2})}.
\]
Combined with \eqref{smallr:main}, this leads  to
\begin{equation}
\begin{split}
LHS\eqref{l2toli:a} \lesssim & \
T^{-1} \| \la r \ra u_{\leq m+n} \|_{LE^1( \tilde C_{T}^{<T/2})} 
+ \| f_{\leq m+n}\|_{LE^*(C_T^{<T/2})} \\ & \ + \| \la r \ra^2 \nabla f_{\leq m+n}\|_{LE(C_{T}^{<T/2})}.
\end{split}
\label{l2toli:b}\end{equation}

It remains to make the transition from \eqref{l2toli:b} to
\eqref{l2toli:a}. This is done via \eqref{ctrdu:va} applied uniformly
in dyadic regions $C_T^R$ with $R \lesssim T$, which yields
\[
\| \la r \ra u_{\leq m} \|_{LE^1( \tilde C_{T}^{<T/2})} \lesssim 
 \|  u_{\leq m+n} \|_{LE( \tilde C_{T}^{<T/2})} + 
\| \la r \ra^2 f_{\leq m+n} \|_{LE( \tilde C_{T}^{<T/2})}.
\]
Hence \eqref{l2toli:a} follows.
\end{proof}

\subsection{ Improved pointwise estimates for 
$\nabla u_\alpha$ near the cone}

Here we convert the improvement in the $L^2$ bounds for $\nabla
u_{\leq m}$ given by Lemma~\ref{l2du} near the cone into a similar
$L^\infty$ bound. This shows that the pointwise bounds for $\nabla
u_{\leq m}$ are better than those for $u_{\leq m}$ by a factor of $\la t-r \ra^{-1}$.
\begin{proposition}
We have
\begin{equation}
\begin{split}
U \| \nabla u_{\leq m} \|_{L^\infty ( C_{T}^{U})}
  \lesssim & \  \|  u_{\leq m+n} \|_{L^\infty( \tilde C_{T}^{U})} + 
 T^{-\frac12} U^{\frac12}  \| f_{\leq m+n}\|_{L^2( \tilde C_{T}^{U})}\\ & \ + 
T^{-\frac12} U^{\frac32}  \| \nabla f_{\leq m+n}\|_{L^2( \tilde C_{T}^{U})}.
\end{split}
\label{l2toli:u}\end{equation}
\label{p:pointu}
\end{proposition}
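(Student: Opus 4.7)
The strategy is to combine the Sobolev-type embedding \eqref{l2tolinf-u} applied to $\nabla u_{\leq m}$ with the improved $L^2$ gradient bounds \eqref{ctudu:va} and \eqref{ctudu:dva}, and then to upgrade the remaining $L^2$ norm of $u$ to an $L^\infty$ norm by exploiting the finite four-volume of $\tilde C_T^U$.

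First, I would apply \eqref{l2tolinf-u} with $w = \nabla u_{\leq m}$. Since $[S,\partial] = -\partial$ and $[\Omega_{ij},\partial_k]$ is a linear combination of components of $\partial$, the vector fields $S^i \Omega^j$ with $i+j \leq 2$ may be commuted past $\nabla$ at the cost of lower-order terms, which are absorbed by enlarging the multi-index (this is what the free constant $n$ is for). This yields
\begin{equation*}
\| \nabla u_{\leq m}\|_{L^\infty(C_T^U)} \lesssim \frac{1}{T^{3/2} U^{1/2}} \|\nabla u_{\leq m+n}\|_{L^2(\tilde C_T^U)} + \frac{U^{1/2}}{T^{3/2}} \|\nabla^2 u_{\leq m+n}\|_{L^2(\tilde C_T^U)}.
\end{equation*}

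Next, I would apply \eqref{ctudu:va} to the first $L^2$ term and \eqref{ctudu:dva} to the second; in the latter, the intermediate $\|\nabla u_{\leq m+n}\|_{L^2}$ is further estimated via another application of \eqref{ctudu:va}. The net effect is to replace $\|\nabla u_{\leq m+n}\|_{L^2}$ by $U^{-1}\|u_{\leq m+n}\|_{L^2} + T\|f_{\leq m+n}\|_{L^2}$ and $\|\nabla^2 u_{\leq m+n}\|_{L^2}$ by $U^{-2}\|u_{\leq m+n}\|_{L^2} + U^{-1}T\|f_{\leq m+n}\|_{L^2} + T\|\nabla f_{\leq m+n}\|_{L^2}$. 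Combining, the two contributions involving $\|u_{\leq m+n}\|_{L^2}$ both scale like $T^{-3/2}U^{-3/2}$ and the two contributions involving $\|f_{\leq m+n}\|_{L^2}$ both scale like $T^{-1/2}U^{-1/2}$, leaving
\begin{equation*}
\| \nabla u_{\leq m}\|_{L^\infty(C_T^U)} \lesssim \frac{\|u_{\leq m+n}\|_{L^2(\tilde C_T^U)}}{T^{3/2} U^{3/2}} + \frac{\|f_{\leq m+n}\|_{L^2(\tilde C_T^U)}}{T^{1/2} U^{1/2}} + \frac{U^{1/2}}{T^{1/2}} \|\nabla f_{\leq m+n}\|_{L^2(\tilde C_T^U)}.
\end{equation*}

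Finally, since $\tilde C_T^U$ is a spherical slab of radius $\sim T$ and radial thickness $\sim U$ with time extent $\sim T$, its four-dimensional volume is $\sim T^3 U$, so $\|u_{\leq m+n}\|_{L^2(\tilde C_T^U)} \lesssim T^{3/2} U^{1/2} \|u_{\leq m+n}\|_{L^\infty(\tilde C_T^U)}$. Substituting this and multiplying both sides by $U$ gives exactly \eqref{l2toli:u}. The computation is entirely routine; the only mildly delicate point is that the second-order bound \eqref{ctudu:dva} must itself be fed back into \eqref{ctudu:va} in order to extract the two powers of $U^{-1}$ needed to exactly balance the $T^{3/2}U^{1/2}$ volume factor, so that the pointwise norm of $u$ enters with the clean coefficient $1$ displayed on the right of \eqref{l2toli:u}.
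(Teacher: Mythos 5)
Your proposal is correct and follows essentially the same route as the paper: apply the Sobolev embedding \eqref{l2tolinf-u} to $\nabla u_{\leq m}$, then feed the $L^2$ gradient bounds \eqref{ctudu:va} and \eqref{ctudu:dva} (chaining the latter through the former) into the result. The paper leaves implicit both the extra application of \eqref{ctudu:va} needed to remove the intermediate $\|\nabla u_{\leq m+n}\|_{L^2}$ and the four-volume estimate $\|u\|_{L^2(\tilde C_T^U)} \lesssim T^{3/2}U^{1/2}\|u\|_{L^\infty(\tilde C_T^U)}$ that converts the remaining $L^2$ norm of $u$ into the $L^\infty$ norm appearing in \eqref{l2toli:u}; you made both steps explicit, which is exactly the right bookkeeping.
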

\begin{proof}
The proof is similar to the proof of Proposition~\ref{p:pointr}. 
In view of the Sobolev inequality \eqref{l2tolinf-u}, the bound \eqref{l2toli:u} would follow from
\begin{equation*}
\begin{split}
U \| \nabla u_{\leq m} \|_{L^2 ( C_{T}^{U})} + U^2 \| \nabla^2 u_{\leq m} \|_{L^2 ( C_{T}^{U})}
\lesssim & \  \|  u_{\leq m+n} \|_{L^2 ( C_{T}^{U})}  + 
 UT \| f_{\leq m+n}\|_{L^2 ( C_{T}^{U})}
\\ & \ + U^2T \| \nabla f_{\leq m+n}\|_{L^2 ( C_{T}^{U})}.
\end{split}
\end{equation*}
To prove the bound on $\nabla u_{\leq m}$ we use \eqref{ctudu:va}, 
which shows that 
\[
 U \| \nabla u_{\leq m} \|_{L^2 ( C_{T}^{U})}  \lesssim 
\|  u_{\leq m+n} \|_{L^2 ( \tilde C_{T}^{U})} + UT \| f _{\leq m+n} \|_{L^2 (\tilde C_{T}^{U})}. 
\]
To prove the bound on $\nabla^2 u_{\leq m}$ we use \eqref{ctudu:dva} 
to obtain
\[
 U^2 \| \nabla^2 u_{\leq m} \|_{L^2 ( C_{T}^{U})}  \lesssim 
 U \| \nabla u_{\leq m+n} \|_{L^2 ( \tilde C_{T}^{U})} 
+ U^2 T \| \nabla f _{\leq m+n} \|_{L^2 ( \tilde C_{T}^{U})}.
\]
\end{proof}

\subsection{ Gradient bounds associated to 
the first decay bound}

Here we start with the bound \eqref{firstdecestderiv} for $u_{\leq m}$
and improve it for small $r$, as well as complement it with gradient
bounds.

\begin{lemma}
The following pointwise estimates hold:
\begin{equation}
|u_{\leq m}| \lesssim C_1  \frac{\log \la t-r \ra }{t \la t-r \ra^\frac12}, \qquad 
|\nabla u_{\leq m}| \lesssim C_1  \frac{\log \la t-r \ra }{\la r \ra \la t-r \ra^\frac32}
\label{firstgrad}\end{equation}
where
\[
 C_1 = \!
\| u_{\leq m+n}\|_{LE^1} + \sup_{R,U} T^\frac12 R^\frac12 U^\frac12
 \| f_{\leq m+n}\|_{L^2(C_{T}^{R,U}) }+ T^{-\frac12} R^\frac32 U^\frac32
 \| \nabla f_{\leq m+n}\|_{L^2(C_T^{R,U})}. \!
\]
\end{lemma}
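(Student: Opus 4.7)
The plan is to combine the earlier bound \eqref{firstdecestderiv} with Propositions \ref{p:pointr} and \ref{p:pointu} by splitting the dyadic slab $C_T=\{T\le t\le 2T\}\cap C$ into the interior piece $C_T^{<T/2}$ (where $\la t-r\ra\sim t\sim T$) and the near-cone pieces $C_T^U$ with $1\lesssim U\lesssim T/4$ (where $\la r\ra\sim T$ and $\la t-r\ra\sim U$). Together these exhaust $C_T$, and they capture exactly the two regimes appearing in the target bounds \eqref{firstgrad}.

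On the interior piece $C_T^{<T/2}$, invoke Proposition \ref{p:pointr} directly. Its left-hand side is $\|u_{\leq m}\|_{L^\infty} + \|\la r\ra \nabla u_{\leq m}\|_{L^\infty}$ on $C_T^{<T/2}$, and its right-hand side is bounded by $C_1 T^{-3/2}$ plus $C_1 T^{-1/2}$-weighted $f$-terms. Indeed, the $\|u_{\leq m+n}\|_{LE}$ term is dominated by the first summand in $C_1$, while the norms $\|f_{\leq m+n}\|_{LE^*}$ and $\|\la r\ra^2\nabla f_{\leq m+n}\|_{LE}$ decompose dyadically over $R\lesssim T$ and $U\lesssim T$, giving an expression controlled by the suprema defining $C_1$ after one uses the weights $R^{1/2}$ and $R^{3/2}$ and sums geometrically. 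Since $\la t-r\ra\sim T$ throughout $C_T^{<T/2}$, the resulting $T^{-3/2}$ decay is actually better than the target (no logarithm is needed away from the cone).

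On each near-cone piece $C_T^U$, the bound for $u_{\leq m}$ is immediate from \eqref{firstdecestderiv}: substituting $\la r\ra\sim T$ and $\la t-r\ra\sim U$ yields precisely $C_1\log U/(T U^{1/2})$, matching the target. For $\nabla u_{\leq m}$, plug the $u$-bound just established (on the slightly enlarged region $\tilde C_T^U$) into Proposition \ref{p:pointu}; dividing through by $U$ produces an additional $U^{-1}$ gain, giving the target $C_1\log U/(T U^{3/2})$. The $f$-contributions from Proposition \ref{p:pointu}, namely $T^{-1/2}U^{-1/2}\|f_{\leq m+n}\|_{L^2(\tilde C_T^U)}$ and $T^{-1/2}U^{1/2}\|\nabla f_{\leq m+n}\|_{L^2(\tilde C_T^U)}$, are bounded using only $R\sim T$ in the suprema defining $C_1$: this gives $C_1/(T^{3/2}U)$, which dominates $C_1/(TU^{3/2})$ by $U\le T$, and similarly for $\nabla f$.

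The principal bookkeeping task is matching the dyadic $L^2$ norms of $f$ and $\nabla f$ on the right-hand sides of Propositions \ref{p:pointr} and \ref{p:pointu} against the suprema in $C_1$; the key observation is that in the near-cone regime only $R\sim T$ contributes, while in the interior regime the $R\lesssim T$ pieces sum geometrically. The degenerate cases $U=1$ and $R=1$ of the dyadic decomposition behave identically to their nearest dyadic neighbours and require no separate argument. Once these computations are assembled, summing the pointwise bounds over the finitely many pieces of the decomposition of $C_T$ yields \eqref{firstgrad}.
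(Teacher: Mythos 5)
Your overall strategy — split $C_T$ into the interior piece $C_T^{<T/2}$ and the near-cone pieces $C_T^U$, then apply Propositions~\ref{p:pointr} and~\ref{p:pointu} together with \eqref{firstdecestderiv} — is the paper's strategy. However, your treatment of the interior piece contains a genuine gap.

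You assert that in the term $T^{-3/2}\|u_{\leq m+n}\|_{LE(\tilde C_T^{<T/2})}$ produced by Proposition~\ref{p:pointr}, the factor $\|u_{\leq m+n}\|_{LE(\tilde C_T^{<T/2})}$ ``is dominated by the first summand in $C_1$,'' i.e.\ by $\|u_{\leq m+n}\|_{LE^1}$. That is not true. The $LE$ norm applied to $u$ itself carries the weight $\la r\ra^{-1/2}$, whereas the $LE^1$ norm only controls $\la r\ra^{-1}u$ in $LE$, i.e.\ $\la r\ra^{-3/2}u$ in $L^2$. On the region $\tilde C_T^{<T/2}$, where $r\lesssim T$, the best one can say by comparing weights is
\[
\|u_{\leq m+n}\|_{LE(\tilde C_T^{<T/2})}\ \lesssim\ T\,\|u_{\leq m+n}\|_{LE^1},
\]
with no way to remove the factor $T$. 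Plugging this into Proposition~\ref{p:pointr} gives only $C_1 T^{-1/2}$, which is far weaker than the target (on $C_T^{<T/2}$ one has $\la t-r\ra\sim T$, so the target is $C_1\log T/T^{3/2}$). This is precisely the place where \eqref{firstdecestderiv} must be used as the ``starting point'': feeding the pointwise bound $|u_{\leq m+n}(t,x)|\lesssim \la r\ra^{-1}T^{-1/2}\log T\cdot(\cdots)$ (valid on $\tilde C_T^{<T/2}$ since $\la t-r\ra\sim T$ there) directly into the definition of the $LE(\tilde C_T^{<T/2})$ norm yields $\|u_{\leq m+n}\|_{LE(\tilde C_T^{<T/2})}\lesssim \log T\cdot(\cdots)$, after which the $T^{-3/2}$ prefactor produces the required $\log T/T^{3/2}$ decay. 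You do apply \eqref{firstdecestderiv} on the near-cone slabs, but you skip it on the interior, which is exactly where it is indispensable.

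A secondary, minor point: the dyadic summation of $\la r\ra^{1/2}f$ over $R\lesssim T$ in $\|f_{\leq m+n}\|_{LE^*(\tilde C_T^{<T/2})}$ is not geometric — each dyadic shell contributes comparably, giving a $\log T$ factor rather than a bounded constant. This does not break the argument, since the target already contains $\log\la t-r\ra$, but the phrase ``sums geometrically'' is inaccurate. (The $\nabla f$ term, by contrast, involves an $LE$ supremum and needs no summation.)
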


Here for brevity $C_T^{R,U}$ stands for either $C_T^R$ or $C_T^U$,
with the natural convention that $R \sim T$ in $C_T^U$ and $U \sim T$
in $C_T^R$. The proof is a direct application of
Propositions~\ref{p:pointr}, \ref{p:pointu}, using
\eqref{firstdecestderiv} as a starting point.

\subsection{The second decay bound}

\begin{lemma}
The following decay estimate holds:
\begin{equation}\label{scnddecest}
|u_{\leq m}| \lesssim   C_2  \frac{\log\la t-r\ra}{t \la t-r \ra}, \qquad 
|\nabla u_{\leq m}| \lesssim C_2  \frac{\log\la t-r\ra}{\la r \ra \la t-r \ra^2}
\end{equation}
where
\[
 C_2 = 
\| u_{\leq m+n}\|_{LE^1} + \sup_{R,U} T R^{\frac12} U^{\frac12} \| f_{\leq m+n}\|_{L^2(C_{T}^{R,U}) }+ 
 R^{\frac32} U^{\frac32} \|\nabla  f_{\leq m+n}\|_{L^2(C_T^{R,U})}.
\]
\end{lemma}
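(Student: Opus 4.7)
The plan is to iterate the argument behind \eqref{firstgrad}, now using \eqref{firstgrad} itself to bound $H_\alpha$ pointwise, and then rerunning the one-dimensional representation \eqref{Hsol} together with Propositions~\ref{p:pointr} and~\ref{p:pointu}.

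First I would substitute \eqref{firstgrad} (with the index shifted by a fixed constant $n$ to absorb commutators and the $\Omega^2,\nabla^2$ factors) into the pointwise inequality \eqref{Gbound}. The dominant term comes from $\la r\ra^{-2}\nabla u_{\leq m}$, and from $\la r\ra^{-2}\nabla^2 u_{\leq m}$ which is bounded by $\nabla u_{\leq m+1}$, yielding
\[
|H_\alpha(s,\rho)| \lesssim \frac{C_1\,\log\la s-\rho\ra}{\la\rho\ra^3\,\la s-\rho\ra^{3/2}} + |H_{\alpha,f}(s,\rho)|, \qquad \rho \gtrsim 1,
\]
with $H_{\alpha,f}$ collecting the $f_{\leq m+n}$ contributions. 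Plugging this into \eqref{Hsol} and repeating the dyadic decomposition of $D_{tr}$ from the proof of \eqref{firstdecestderiv}, partitioning by the size $R'$ of $\rho$ and separating the cases $R' \ll t-r$ and $R' \gtrsim t-r$, produces after the dyadic sum the intermediate pointwise estimate
\[
|u_{\leq m}(t,r)| \lesssim \frac{C_2\,\log\la t-r\ra}{\la r\ra\,\la t-r\ra}, \qquad r \gtrsim 1.
\]
The gain of a factor $\la t-r\ra^{-1/2}$ over \eqref{firstdecestderiv} reflects the extra $\la s-\rho\ra^{-1}$ in the integrand coming from \eqref{firstgrad}.

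I would next convert this intermediate bound into the full estimate \eqref{scnddecest} by invoking Propositions~\ref{p:pointr} and~\ref{p:pointu}. A short $L^2$ computation using the intermediate bound for $r\gtrsim 1$ together with \eqref{firstgrad} for $r\lesssim 1$ yields $\|u_{\leq m+n}\|_{LE(\tilde C_T^{<T/2})}\lesssim C_2\log T/T^{1/2}$, so that Proposition~\ref{p:pointr} delivers both $\|u_{\leq m}\|_{L^\infty(C_T^{<T/2})}$ and $\|\la r\ra\nabla u_{\leq m}\|_{L^\infty(C_T^{<T/2})}$ bounded by $C_2\log T/T^2$, which matches the target in the region $r \leq T/2$. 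For the gradient in the near-cone region $C_T^U$ with $U\ll T$, Proposition~\ref{p:pointu} applied to the intermediate bound produces $\|\nabla u_{\leq m}\|_{L^\infty(C_T^U)}\lesssim C_2\log U/(TU^2)$, which is the desired target since $\la r\ra\sim T$ there. The $f$-terms at each step are controlled by the weighted $L^2$ norms encoded in the definition of $C_2$.

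The main obstacle is the middle step: the dyadic integration over $D_{tr}$ must carefully distinguish blocks with $\rho \ll t-r$ (where the $s$-range is short and $s-\rho\sim t-r$, forcing us to exploit the smallness of $\rho$) from blocks with $\rho \gtrsim t-r$ (where one sees the full tail of the decaying integrand $\log\la u\ra/\la u\ra^{3/2}$). The single factor $\log\la t-r\ra$ in the final bound arises from the dyadic sum over $\rho$-scales comparable to $t-r$, exactly as in the proof of \eqref{firstdecestderiv}.
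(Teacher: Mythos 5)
Your proposal is correct and follows essentially the same route as the paper: substitute the first decay bound \eqref{firstgrad} and a pointwise bound on $f$ into \eqref{Gbound} to get a pointwise estimate for $G_{\leq m}$, run the one--dimensional reduction to obtain the intermediate bound $|u_{\leq m}|\lesssim C_2\log\la t-r\ra/(\la r\ra\la t-r\ra)$, and then invoke Propositions~\ref{p:pointr} and \ref{p:pointu} to recover the small-$r$ and gradient improvements. Two cosmetic differences: the paper converts the $f$-contribution to a pointwise bound via Lemma~\ref{l:l2tolinf} rather than carrying it as an $L^2$ remainder, and the paper absorbs the $\log\la s-\rho\ra$ in the $G$-estimate into a half-power of $\la s-\rho\ra$ (so $|G_{\leq m}|\lesssim C_2/(\la r\ra^3\la t-r\ra)$), which makes the integral over $D_{tr}$ a direct computation rather than a block-by-block dyadic one. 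Your remark on where the logarithm originates is slightly off: in the paper's normalization the log comes from the dyadic sum over $\rho$-scales $\ll t-r$ (each scale contributing $\sim 1/(t-r)$ to $r|u|$), analogously to case (i) of the proof of \eqref{firstdecestderiv}, not from scales comparable to $t-r$; but this doesn't affect the validity of the argument.
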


\begin{proof}
By the Sobolev embeddings of Lemma \ref{l:l2tolinf} we have 
\[
|f_{\leq m}| \lesssim \frac{1}{t \la r \ra^2 \la t-r \ra} C_2.
\]
Also by \eqref{firstgrad} we have 
\[
|u_{\leq m+6} | \lesssim   \frac{\log\la t-r\ra}{t \la t-r \ra^\frac12} C_2, 
\qquad |\nabla u_{\leq m+6} | \lesssim   \frac{\log\la t-r\ra}{\la r \ra \la t-r \ra^\frac32} C_2.
\]
Hence for the functions $G_{\leq m}$ we obtain
\[
|G_{\leq m}| \lesssim \frac{1}{ \la r \ra^3 \la t-r \ra} C_2.
\]
Computing via the one dimensional reduction, this leads to a bound
for $u_{\leq m}$ of the form
\[
|u_{\leq m}| \lesssim   C_2  \frac{\log\la t-r\ra}{\la r \ra  \la t-r \ra}
\]
 which is comparable to  \eqref{scnddecest} near the cone, but it is weaker
for $r \ll t$. Then the small $r$ bound for $u_{\leq m}$ and the bound for $\nabla u_{\leq m}$ are
obtained from Propositions~\ref{p:pointr}, \ref{p:pointu} (we need to
increase $n$ appropriately at this stage).
\end{proof}

\subsection{The third decay bound}

\begin{lemma}
The following decay estimate holds:
\begin{equation}\label{thrddecest}
|u_{\leq m}| \lesssim   C_3  \frac{\log^3\la t-r\ra}{t \la t-r \ra^2}, \qquad 
|\nabla u_{\leq m}| \lesssim C_3  \frac{\log^3\la t-r\ra}{\la r\ra \la t-r \ra^3}
\end{equation}
where
\[
 C_3 = 
\| u_{\leq m+n}\|_{LE^1} + \sup T^2 R^{\frac12} U^{\frac12} \| f_{\leq m+n}\|_{L^2(C_{T}^{R,U}) }
+ T R^{\frac32} U^{\frac32} \|\nabla f_{\leq m+n}\|_{L^2(C_T^{R,U})}. 
\]
\label{lem3}
\end{lemma}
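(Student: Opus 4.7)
The proof plan parallels the derivation of the second decay bound \eqref{scnddecest}: we feed the pointwise estimates from \eqref{scnddecest} into the majorant \eqref{Gbound} for $G_{\leq m}$, run the one-dimensional reduction of Section~3.2, and close the argument with Propositions~\ref{p:pointr} and \ref{p:pointu}. Throughout, $n$ denotes a constant that may grow from step to step.

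First, Sobolev embedding via Lemma~\ref{l:l2tolinf}, applied in each dyadic cell $C_T^{R,U}$ together with the weighted $L^2$ control of $f$ built into the definition of $C_3$, gives the pointwise bound $|f_{\leq m+n}| \lesssim C_3\, t^{-2}\la r\ra^{-2}\la t-r\ra^{-1}$. Substituting this and \eqref{scnddecest} into \eqref{Gbound} yields
\[
|G_{\leq m}(s,\rho)| + |SG_{\leq m}(s,\rho)| \lesssim C_3\,\frac{\log\la s-\rho\ra}{\la\rho\ra^{3}\la s-\rho\ra^{2}},
\]
the dominant contribution being $\la r\ra^{-2}|\nabla^{2} u_{\leq m+n}|$ (with $n$ increased to accommodate one further derivative).

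Next, pass to the one-dimensional reduction. Define $H_{\leq m}$ by \eqref{Hdef}, which inherits the same pointwise bound as $G_{\leq m}$, let $v_{\leq m}$ solve \eqref{1dbox} so that $|u_{\leq m}|\leq v_{\leq m}$, and use \eqref{Hsol} to write
\[
r\, v_{\leq m}(t,r) \lesssim \int_{D_{tr}} \rho\, H_{\leq m}(s,\rho)\,ds\,d\rho.
\]
Split $D_{tr}$ dyadically in $\rho$ and argue exactly as in cases (i)--(ii) of the proof of \eqref{firstdecestderiv}: for the strips with $\rho\ll t-r$, apply the $S$-flow identity together with the $L^{2}$ control on $H_{\leq m}$ and $SH_{\leq m}$ coming from the pointwise bound above, while for the strips with $\rho\gtrsim t-r$, apply Cauchy--Schwarz against the pointwise bound alone. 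Summing over the dyadic strips yields, for $r\sim t$,
\[
|u_{\leq m}(t,r)| \lesssim C_3\,\frac{\log^{3}\la t-r\ra}{\la r\ra\,\la t-r\ra^{2}},
\]
which is sharp near the cone but weaker than \eqref{thrddecest} by a factor $t/\la r\ra$ when $r\ll t$.

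The interior upgrade replacing $\la r\ra^{-1}$ by $t^{-1}$ for $r\ll t$, and the stronger $\la t-r\ra^{-3}$ gradient bound, are produced by Propositions~\ref{p:pointr} and \ref{p:pointu} respectively, applied exactly as in the derivation of \eqref{scnddecest} from the preceding lemma. The main subtlety lies in the bookkeeping of the logarithmic losses in the previous step: one log enters through the pointwise bound on $H_{\leq m}$ inherited from \eqref{scnddecest}, and the dyadic summations in $\rho$ and in $s-\rho$ each contribute one further log, producing the $\log^{3}$ stated in \eqref{thrddecest}; a naive pointwise integration ignoring the $S$-flow device would lose a full power of $\la t-r\ra$ in the intermediate regime $\rho\sim t-r$, so the iteration in case (i) is essential.
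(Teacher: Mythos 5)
Your plan does not match the paper's proof, and more importantly it runs into a quantitative obstruction that the paper explicitly flags: ``at this stage there is a new twist, namely that the one dimensional reduction no longer suffices for the proof of \eqref{pointgoal}.''

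Here is the gap. Substituting the second decay bound into the pointwise majorant \eqref{Gbound} gives, up to the induced shift in $m$ and modulo logarithms, $|G_{\leq m}(s,\rho)| \lesssim C_3\,\la\rho\ra^{-3}\la\sigma\ra^{-2}$ with $\sigma = s-\rho$. Feeding this into the $1$d reduction \eqref{Hsol}, one finds after passing to the coordinates $(\sigma,\tau)$ with $\tau = s+\rho$ that
\[
r v(t,r) \lesssim C_3\int_0^{t-r}\frac{d\sigma}{\la\sigma\ra^{2}\,\la t-r-\sigma\ra}.
\]
The contribution from $\sigma \lesssim 1$ is already $\approx (t-r)^{-1}$, so one only recovers $|u_{\leq m}| \lesssim C_3\,\la r\ra^{-1}\la t-r\ra^{-1}$ near the cone, which is no better than the second decay bound. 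The extra power of $\la\sigma\ra^{-1}$ in $G$ gives nothing once the $\sigma$-integral has become convergent at $\sigma=0$. Moreover, the $S$-flow device invoked in your case (i) cannot repair this. In the proof of \eqref{firstdecestderiv} the Cauchy--Schwarz identity along the $S$-flow converts \emph{local-energy $L^2$} information into pointwise information by averaging in time; once you already have pointwise bounds on $H$ and $SH$, that same inequality $|H(\gamma(0))|^2 \le s^{-1}\int_0^s(|H|^2+|SH|^2)$ degenerates to $\sup|H| \lesssim \sup|H|+\sup|SH|$ and gives nothing. So your intermediate claim $|u_{\leq m}| \lesssim C_3\log^3\la t-r\ra\,\la r\ra^{-1}\la t-r\ra^{-2}$ near the cone is not justified by the argument you describe.

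What the paper actually does is decompose $G_{\leq m}$ \emph{algebraically}, not through the absolute-value bound \eqref{Gbound}: it writes $G_{\leq m} = f_{\leq m} + a\,u_{\leq m+6} + \partial_t(b\,u_{< m+6})$ with $a\in S^Z(r^{-3})$, $b\in S^Z(r^{-2})$, splits off the part near the cone $G^2_{\leq m}=\partial_t(\chi(t,r)\,b\,u_{<m+6})$, and handles $G^1_{\leq m}$ (which carries an extra $t^{-1}$) by the $1$d reduction in the standard way. The crucial piece $G^2_{\leq m}$ is treated by Lemma~\ref{l:maindr}: the forward solution of $\Box u^2 = \partial_t f$ is realized as $u^2 = \partial_t v$ with $\Box v = f$, and one applies the $1$d reduction to $v$, $\nabla v$, $Sv$, $\Omega v$, $(t\partial_i + x_i\partial_t)v$ and observes that these dominate $\la t-r\ra\,\partial_t v$, thereby recovering the missing power of $\la t-r\ra$. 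Taking absolute values too early -- precisely what \eqref{Gbound} does -- destroys the cancellation produced by the exterior $\partial_t$, which is why the route through \eqref{Gbound} stalls. That cancellation is the new ingredient at this stage of the iteration, and your proposal omits it entirely.
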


\begin{proof}
As before, the main step in the proof is to obtain a pointwise bound for $u_{\leq m}$
which coincides with \eqref{thrddecest} near the cone,
\begin{equation}
 |u_{\leq m} | \lesssim  \frac{\log^3\la t-r\ra}{\la r \ra  \la t-r \ra^2} C_3.
\label{pointgoal}\end{equation}
Once this is done, the full estimate \eqref{thrddecest} follows
easily by a direct application of
Propositions~\ref{p:pointu},~\ref{p:pointr}.  However, at this stage
there is a new twist, namely that the one dimensional reduction no
longer suffices for the proof of \eqref{pointgoal}.  

The pointwise bound for $f_{\leq m}$ has the form
\[
|f_{\leq m}| \lesssim \frac{1}{t^2 \la r \ra^2 \la t-r \ra} C_3.
\]
Also by \eqref{scnddecest} we have 
\[
|u_{\leq m+6} | \lesssim   \frac{\log\la t-r\ra}{t \la t-r \ra} C_3, 
\qquad |\nabla u_{\leq m+6} | \lesssim   \frac{\log\la t-r\ra}{\la r \ra \la t-r \ra^2} C_3.
\]

We use  the wave equation for $u_{\leq m}$ given by \eqref{boxua}, and 
 rewrite $G_{\leq m}$ in the form
\[
G_{\leq m} = f_{\leq m} + a u_{\le m+6} +  \partial_t (b u_{<m+6}), 
\qquad a \in S^Z(r^{-3}), \ \ b \in S^Z(r^{-2}).
\]
Here we can confine ourselves to $\partial_t$ derivatives in the last term
because for any $S$ and $\Omega$ component we gain a factor of $r^{-1}$
and include it in the second term.
We split  $G_{\leq m}$ into two parts, 
\[
G_{\leq m} = G_{\leq m}^1+  G_{\leq m}^2
\]
with
\[
G_{\leq m}^1 = f_{\leq m} + a u_{\le m+6} +  \partial_t  ((1-\chi(t,r)) b u_{<m+6}),
\quad G_{\leq m}^2 =  \partial_t (\chi(t,r) b u_{<m+6})
\]
Here $\chi$ is a smooth cutoff selecting the region $t-r \ll t$.

The function $G_{\leq m}^1 $ contains the part of $G_{\leq m}$ which has
good pointwise bounds,
\[
|G_{\leq m}^1 | \lesssim  
\frac{\log\la t-r\ra}{t \la r \ra^3 \la t-r \ra} C_3.
\]
Computing via the one dimensional reduction, this gives 
the pointwise bound \eqref{pointgoal} for the corresponding part $u_{\leq m}^1$ of
$u_{\leq m}$.

Next we prove the same bound for the output  $u_{\leq m}^2$
of $G_{\leq m}^2$. Taking absolute values and applying the 
one dimensional reduction does not work, as it misses 
a cancellation due to the presence of the derivatives.
Instead we do a more precise computation

\begin{lemma}
Consider a smooth function $f$ supported in $ \{\frac{t}2  \leq  r  \leq t \}$
so that
\begin{equation}
|f|+|Sf|+|\Omega f|  + \la t-r \ra |\partial_r f|  \lesssim \frac{1}{t^3 \la t-r \ra \log^2\la t-r \ra}.
\label{fall}\end{equation}
Then the forward solution $u$  to 
\[
\Box u = \partial_t f
\]
satisfies the bound
\begin{equation}
| u| \lesssim \frac{1}{t \la t-r \ra^2}.
\label{fallu}\end{equation}
\label{l:maindr}
\end{lemma}
We note that if \eqref{fall} is replaced by 
\begin{equation}
|f|+|\nabla f| + |Sf|+|\Omega f|  + \la t-r \ra |\partial_r f|  \lesssim \frac{ \log\la t-r \ra}{t^3 \la t-r \ra}
\label{fall1}\end{equation}
then \eqref{fallu} is trivially replaced by 
\begin{equation}
| u| \lesssim \frac{\log^3\la t-r \ra}{t \la t-r \ra^2}
\label{fallu1}\end{equation}
which suffices to conclude the proof of Lemma~\ref{lem3}.

\begin{proof}
The function $u$ is expressed in the form $u = \partial_t v$
with $v$ the forward solution to $\Box v = f$.
Via the one dimensional reduction applied to $v$, $\nabla v$ 
$\Omega v$, $S v$ and $(t \partial_i
+ x_i \partial_t) v$ we obtain 
\[
|v|+|\nabla v| +|Sv|+|\Omega v|  + \sum_i  | (t \partial_i
+ x_i \partial_t) v|  \lesssim \frac{1}{t \la t-r \ra} 
\]
where the main contribution comes from the cone.
The above left hand side dominates $\la t-r\ra \partial_t v$;
therefore the proof of the lemma is complete.
\end{proof}

\end{proof}

\subsection{The fourth (and final) decay bound}

Here we reiterate one last time to remove the logarithms in
\eqref{thrddecest} and establish the final bound, which concludes the
proof of the Theorem.

\begin{lemma}\label{finaldecay}
  The following decay estimate holds:
  \begin{equation}\label{frthdecest}
    |u_{\leq m}| \lesssim   C_4  \frac{1}{t \la t-r \ra^2}, \qquad 
    |\nabla u_{\leq m}| \lesssim C_4  \frac{1}{\la r \ra \la t-r \ra^3}
  \end{equation}
  where
  \[
  C_4 = \| u_{\leq m+n}\|_{LE^1} + \sup_T \sum_{R,U} T^2
  R^{\frac12} U^{\frac12} \| f_{\leq m+n}\|_{L^2(C_{T}^{R,U}) } +
  T R^{\frac32} U^{\frac32} \|\nabla f_{\leq m+n}\|_{L^2(C_T^{R,U})}.
  \]
\end{lemma}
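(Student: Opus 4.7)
The plan is to iterate the two-step scheme of Lemma~\ref{lem3} one final time, using \eqref{thrddecest} as the input, and exploiting two refinements which together eliminate the logarithmic losses: first, the extra factor of $\la t-r\ra^{-1}$ gained in the third bound compared with the second, and second, the fact that $C_4$ is built from $\sum_{R,U}$ rather than $\sup_{R,U}$.

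As in the proof of Lemma~\ref{lem3}, it suffices to establish the near-cone pointwise bound
\[
|u_{\leq m}(t,r)| \lesssim \frac{C_4}{\la r \ra \la t-r \ra^{2}};
\]
the small-$r$ improvement in \eqref{frthdecest} and the gradient bound then follow from Propositions~\ref{p:pointr} and~\ref{p:pointu}. I again split $G_{\leq m} = G^1 + G^2$ using the cutoff $\chi$ selecting $\{t-r \ll t\}$, with $G^2 = \partial_t(\chi b u_{<m+6})$, $b\in S^Z(r^{-2})$, isolating the dangerous $\partial_t$-divergence structure and $G^1$ collecting $f_{\leq m}$, the term $a u_{\leq m+6}$ with $a \in S^Z(r^{-3})$, and the complementary $(1-\chi)$ piece.

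The key improvement for the output $u^2$ of $G^2$ is that the third bound \eqref{thrddecest}, in the support of $\chi$ where $r \sim t$, gives
\[
|\chi b u_{<m+6}| + |S(\chi b u)| + |\Omega (\chi b u)| + \la t-r\ra |\partial_r(\chi b u)| \lesssim \frac{C_3 \log^3\la t-r\ra}{t^3 \la t-r\ra^2}.
\]
The extra factor $\la t-r\ra^{-1}$ compared with Lemma~\ref{lem3} easily dominates the $\log^5\la t-r\ra$ gap to the sharper hypothesis \eqref{fall} of Lemma~\ref{l:maindr} once $\la t-r\ra$ is large (the small $\la t-r\ra$ regime being trivial). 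Consequently the \emph{primary} logarithm-free conclusion \eqref{fallu} of Lemma~\ref{l:maindr} applies and gives $|u^2_{\leq m}| \lesssim C_3/(t \la t-r\ra^2)$.

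For the output $u^1$ of $G^1$ I would use the one dimensional reduction \eqref{Hsol} just as in Lemma~\ref{lem3}, but extracting pointwise bounds on $f_{\leq m}$ and its derivatives from the $L^2$ quantities in $C_4$ via Lemma~\ref{l:l2tolinf} applied separately on each dyadic $C_T^{R,U}$. The excess logarithmic factors generated by the dyadic $R$-sum inside $\int_{D_{tr}} \rho H$, together with those arising from the dyadic $L^2 \to L^\infty$ transition for $f_{\leq m}$, are precisely the ones absorbed by replacing $\sup_{R,U}$ with $\sum_{R,U}$ in the definition of $C_4$. This produces the desired bound $|u^1_{\leq m}| \lesssim C_4/(\la r\ra \la t-r\ra^2)$, free of logarithms.

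The main technical obstacle is the bookkeeping in this last step: one must verify that every dyadic logarithm produced by the one dimensional reduction on $G^1$ (from the $R$ sum near $r \ll t$, from the $U$ sum near the cone, and from Sobolev embedding of $f$-type terms) is strictly covered by a matching $R^{1/2}U^{1/2}$ weight in the sum defining $C_4$. Once this is achieved, an application of Propositions~\ref{p:pointr} and~\ref{p:pointu} upgrades the near-cone bound to the full statement \eqref{frthdecest}, concluding the proof.
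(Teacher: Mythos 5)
Your proposal follows the paper's proof essentially step by step: the same decomposition $G_{\leq m}=G^1_{\le m}+G^2_{\le m}$, the same application of the sharper (logarithm-free) case of Lemma~\ref{l:maindr} to $G^2_{\le m}$ after verifying that the extra $\la t-r\ra^{-1}$ gain over hypothesis \eqref{fall} dominates the $\log^5$ gap, and the same reliance on the dyadic $R,U$-summability built into $C_4$ to absorb the logarithms from the $f$-contribution in the one dimensional reduction for $G^1_{\le m}$. The only item you leave implicit in your ``bookkeeping'' remark is that the non-$f$ part of $G^1_{\le m}$, decaying like $\la r\ra^{-3}\la t-r\ra^{-3}\log^3\la t-r\ra$, produces a one-dimensional integral that converges with a full spare power of $\la t-r\ra$ to absorb the logarithms, which the paper points out explicitly.
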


\begin{proof}
  The argument is similar to the previous step, but with some extra
  care in order to avoid the logarithmic losses. The main goal is
  again to obtain a pointwise bound for $u_{\leq m}$ which coincides
  with \eqref{thrddecest} near the cone,
  \begin{equation}
    |u_{\leq m} | \lesssim  \frac{1}{\la r \ra  \la t-r \ra^2} C_4
    \label{pg}\end{equation}
  after which the full estimate \eqref{thrddecest} follows from
  Propositions~\ref{p:pointu},~\ref{p:pointr}.

  The pointwise bound for $f_{\leq m}$ still has the form
  \[
  |f_{\leq m}| \lesssim \frac{1}{t^2 \la r \ra^2 \la t-r \ra} C_4,
  \]
  but now in addition we have dyadic summability with respect to $R$
  and $U$.  Also by \eqref{thrddecest} we have
  \[
  |u_{\leq m+6} | \lesssim \frac{\log^3\la t-r \ra}{t \la t-r
    \ra^2} C_4, \qquad |\nabla u_{\leq m+6} | \lesssim
  \frac{\log^3\la t-r \ra}{\la r \ra \la t-r \ra^3} C_4.
  \]
  We split $G_{\leq m} = G_{\leq m}^1+ G_{\leq m}^2$ exactly as before.

  For $G_{\leq m}^1$ we use the one dimensional reduction, based on the
  bound
  \[
  |G_{\leq m}^1| \lesssim |f_{\leq m}| + \frac{\log^3\la t-r \ra}{\la r
    \ra^3 \la t-r \ra^3} C_4,
  \]
  which gives the pointwise bound \eqref{pg} for $u_{\leq m}^1$.  The
  dyadic summability for $f$ causes the absence of logarithms in the
  bound for the contribution of $f$. The contribution of the second
  term is of the order of
  \[
  \frac{\log^3\la t-r \ra}{\la r\ra \la t-r \ra^4}
  \]
  where we have a full extra power of $\la t-r \ra$ available to
  control the logarithms.

  Finally, the contribution of $G_{\leq m}^2$ is controlled by
  Lemma~\ref{l:maindr}.
\end{proof}

\newsection{Perturbations of Kerr spacetimes}

We now present an application of Theorem~\ref{main} to general
relativity. We are able to recover Price's Law not only for
Schwarzschild and Kerr spacetimes, but also for certain small,
time-dependent perturbations thereof. We begin by presenting the
results obtained in \cite{MMTT} and \cite{TT} for the Schwarzschild
and Kerr metrics. We continue in \ref{ss:sle} with a proof of
stationary local energy decay estimates for perturbations of
Schwarzschild; while the perturbations are required to be small, no
decay to Schwarzschild is assumed.  This result applies as well to
small perturbations of Kerr with small angular momentum. Finally, in
\ref{ss:le} we establish weak local energy decay estimates for small
perturbations of Kerr; here, we essentially require a $t^{-1-}$ decay
rate of the perturbed metric to Kerr.

\subsection{ The Schwarzschild and Kerr metrics}

The Kerr metric in Boyer-Lindquist coordinates is given by
\[
ds^2 = g_{tt}dt^2 + g_{t\phi}dtd\phi + g_{rr}dr^2 +
g_{\phi\phi}d\phi^2 + g_{\theta\theta}d\theta^2
\]
where $t \in \R$, $r > 0$, $(\phi,\theta)$ are the spherical
coordinates on $\S^2$ and
\[
g_{tt}=-\frac{\Delta-a^2\sin^2\theta}{\rho^2}, \qquad
g_{t\phi}=-2a\frac{2Mr\sin^2\theta}{\rho^2}, \qquad
g_{rr}=\frac{\rho^2}{\Delta}
\]
\[ g_{\phi\phi}=\frac{(r^2+a^2)^2-a^2\Delta
  \sin^2\theta}{\rho^2}\sin^2\theta, \qquad g_{\theta\theta}={\rho^2}
\]
with
\[
\Delta=r^2-2Mr+a^2, \qquad \rho^2=r^2+a^2\cos^2\theta.
\]
Here $M$ represents the mass of the black hole, and $aM$ its angular
momentum.
 
A straightforward computation gives us the inverse of the metric:
\[ g^{tt}=-\frac{(r^2+a^2)^2-a^2\Delta\sin^2\theta}{\rho^2\Delta},
\qquad g^{t\phi}=-a\frac{2Mr}{\rho^2\Delta}, \qquad
g^{rr}=\frac{\Delta}{\rho^2},
\]
\[
g^{\phi\phi}=\frac{\Delta-a^2\sin^2\theta}{\rho^2\Delta\sin^2\theta} ,
\qquad g^{\theta\theta}=\frac{1}{\rho^2}.
\]

The case $a = 0$ corresponds to the Schwarzschild space-time.  One can
view $M$ as a scaling parameter, and $a$ scales in the same way as
$M$. Thus $M/a$ is a dimensionless parameter.  We shall subsequently
assume that $a$ is small $a/M \ll 1$, so that the Kerr metric is a
small perturbation of the Schwarzschild metric.  One could also set $M
= 1$ by scaling, but we prefer to keep $M$ in our formulas.  We let
$g_{\mathbf S}$, $g_{\mathbf K}$ denote the Schwarzschild, respectively
Kerr metric, and $\Box_{\mathbf S} $, $\Box_{\mathbf K} $ denote the
associated d'Alembertians.

In the above coordinates the Kerr metric has singularities at $r = 0$
on the equator $\theta = \pi/2$ and at the roots of $\Delta$, namely
$r_{\pm}=M\pm\sqrt{M^2-a^2}$. The singularity at $r=r_{+}$ is just a
coordinate singularity, and corresponds to the event horizon.  The
singularity at $r = r_-$ is also a coordinate singularity; for a
further discussion of its nature, which is not relevant for our
results, we refer the reader to \cite{Ch,HE}.  To remove the
singularities at $r = r_{\pm}$ we introduce functions $r^*$, $v_{+}$
and $\phi_{+}$ so that (see \cite{HE})
\[
dr^*=(r^2+a^2)\Delta^{-1}dr, \qquad dv_{+}=dt+dr^*, \qquad
d\phi_{+}=d\phi+a\Delta^{-1}dr.
\]
We call $v_{+}$ the advanced time coordinate.
The metric then takes the Eddington-Finkelstein form 
\[
\begin{split}
  ds^2= &\
  -(1-\frac{2Mr}{\rho^2})dv_{+}^2+2drdv_{+}-4a\rho^{-2}Mr\sin^2\theta
  dv_{+}d\phi_{+} -2a\sin^2\theta dr d\phi_{+}\\
  & \ +\rho^2 d\theta^2  +\rho^{-2}[(r^2+a^2)^2-\Delta a^2\sin^2\theta]
\sin^2\theta  d\phi_{+}^2
\end{split}
\]
which is smooth and nondegenerate across the event horizon up to but
not including $r = 0$.

In order to talk about perturbations of Kerr we need to settle on a
suitable coordinate frame. The Boyer-Lindquist coordinates are
convenient at spatial infinity but not near the event horizon while
the Eddington-Finkelstein coordinates are convenient near the event
horizon but not at spatial infinity. To combine the two
we replace the $(t,\phi)$ coordinates with $(\tv,\tphi)$ as follows.

As in \cite{MMTT} and \cite{TT}, we define
\[
\tv = v_{+} - \mu(r)
\]
where $\mu$ is a smooth function of $r$. In the $(\tv,r,\phi_{+},
\theta)$ coordinates the metric has the form
\[
\begin{split}
  ds^2= &\ (1-\frac{2Mr}{\rho^2}) d\tv^2
  +2\left(1-(1-\frac{2Mr}{\rho^2})\mu'(r)\right) d\tv dr \\
  &\ -4a\rho^{-2}Mr\sin^2\theta d\tv d\phi_{+} + \Bigl(2 \mu'(r) -
  (1-\frac{2Mr}{\rho^2}) (\mu'(r))^2\Bigr)  dr^2 \\
  &\ -2a\theta (1+2\rho^{-2}Mr\mu' (r))\sin^2dr d\phi_{+} +\rho^2
  d\theta^2 \\
  &\ +\rho^{-2}[(r^2+a^2)^2-\Delta a^2\sin^2\theta]\sin^2\theta
  d\phi_{+}^2.
\end{split}
\]
On the function $\mu$ we impose the following two conditions:

(i) $\mu (r) \geq \rs$ for $r > 2M$, with equality for $r > {5M}/2$.

(ii) The surfaces $\tv = const$ are space-like, i.e.
\[
\mu'(r) > 0, \qquad 2 - (1-\frac{2Mr}{\rho^2}) \mu'(r) > 0.
\]

For convenience we also introduce
\[
\tphi = \zeta(r)\phi_{+}+(1-\zeta(r))\phi
\]
where $\zeta$ is a cutoff function supported near the event horizon
and work in the $(\tv,r,\tphi, \theta)$ coordinates which are
identical to $(t,r,\phi,\theta)$ outside of a small neighborhood of
the event horizon.

Given $r_{-} < r_e <r_{+}$ we consider the Kerr metric and the
corresponding wave equation
\begin{equation}
  \Box_{\mathbf K}  u = f 
  \label{boxsinhom}\end{equation}
in the cylindrical region
\begin{equation}
  \M =  \{ \tv \geq 0, \ r \geq r_e \} 
  \label{mr}\end{equation}
with initial data on the space-like surface
\begin{equation}
  \Sigma^- =  \M \cap \{ \tv = 0 \}.
  \label{mr-}\end{equation}
The lateral boundary of $\M$,
\begin{equation}
  \Sigma^+ =   \M \cap \{ r = r_e\}, \qquad  \Sigma^+_{[\tv_0, \tv_1]} = \Sigma^+ \cap \{\tv_0 \leq \tv \leq 
  \tv_1\}
  \label{mr+}\end{equation}
is also space-like and can be thought of as the exit surface for all
waves which cross the event horizon. This places us in the context of 
Case B in section \ref{ss:cases}. The choice of $r_e$ is not important; for 
convenience one may simply use $r_e = M$ for all Kerr metrics 
with $a/M \ll 1$ and small perturbations thereof.

A main difficulty in proving local energy decay in Schwarzschild/Kerr
space-times is due to the presence of trapped rays (null
geodesics). In the Schwarzschild case, this occurs on the photon
sphere $\{r = 3M\}$. Consequently the local energy bounds have a loss
at $r = 3M$.  To localize there we use a smooth cutoff function
$\chi_{ps}(r)$ which is supported in a small neighborhood of $3M$ and which
equals $1$ near $3M$.  By $\tilde\chi_{ps}(r)$ we denote a smooth cutoff
that equals $1$ on the support of $\chi_{ps}$.  Then we define suitable
modifications of the $LE^1$, respectively $LE^*$ norms by
\[
\| u\|_{LE_{\mathbf S}^1} =  \| \partial_r u\|_{LE} +\|(1-\frac{3M}r)\nabla u\|_{LE}
+  
\|r^{-1} u\|_{LE}
\]
\[
\| f\|_{LE_{\mathbf S}^*} = \|(1-\chi_{ps}) f\|_{LE^*} + \| \chi_{ps} f\|_{H^1+ (1-\frac{3M}r)L^2} 
\]
With these notations, the local energy decay estimates
established in \cite{MMTT} have the form

\begin{theorem}\label{Swz}
  Let $u$ solve $\Box_{\mathbf S} u = f$ in $\M$. Then
  \begin{equation}
  \| u\|_{LE_{\mathbf S}^1}  + 
\sup_{\tilde v \geq 0} \| \nabla u(\tv)\|_{L^2}
    \lesssim\|\nabla  u(0)\|_{L^2} + \| f \|_{LE_{\mathbf S}^*}.
    \label{Sest}\end{equation}
\end{theorem}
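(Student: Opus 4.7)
The plan is to establish Theorem \ref{Swz} via a carefully engineered combination of multipliers: a degenerate Morawetz-type multiplier that produces the bulk positivity for the $LE_{\mathbf S}^1$ norm away from the trapped set, a scaled timelike Killing multiplier to handle the boundary terms on the slices $\{\tv = \text{const}\}$, and a Dafermos--Rodnianski red-shift vector field localized near the event horizon to absorb the horizon boundary contributions. The degeneration of the bulk term at $r=3M$ is exactly what the weight $(1-\tfrac{3M}{r})$ in $LE_{\mathbf S}^1$ records, and it is what forces the dual pairing $\|\chi_{ps} f\|_{H^1 + (1-\frac{3M}{r})L^2}$ on the right.

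First I would introduce a radial Morawetz multiplier of the form $Xu + \alpha(r) u$ with $X = a(r)\partial_{\rs}$, using the Regge--Wheeler coordinate $\rs$. The coefficient $a$ is chosen so that after integration by parts against $\Box_{\mathbf S} u$, the resulting spacetime integrand is a positive definite quadratic form in $\partial_r u$, $(1-\tfrac{3M}{r})\ang u$, and $r^{-1}u$ (plus lower-order weights in $r$), precisely reproducing $\|u\|_{LE_{\mathbf S}^1}^2$. To accommodate the trapped null geodesics on the photon sphere, $a$ is taken with a simple zero at $r=3M$, so the angular bulk density vanishes to second order there. This is the content of the Morawetz analysis on Schwarzschild and is the heart of the estimate.

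Second I would pair the multiplier $X + \alpha$ with a large multiple of $\partial_t$, which is a timelike Killing vector field for the Schwarzschild metric in $r > 2M$ and which gives exact conservation of the standard energy $\|\nabla u(\tv)\|_{L^2}^2$ on level sets $\{\tv = \text{const}\}$. The boundary term on $\Sigma^-$ yields $\|\nabla u(0)\|_{L^2}^2$, the boundary term on $\{\tv = T\}$ produces the $\sup_{\tv}\|\nabla u(\tv)\|_{L^2}^2$ contribution on the left, and the boundary term on the lateral spacelike exit surface $\Sigma^+_{[0,\tv]}$ has a favorable sign by condition (iii) in Case B and can be discarded. Since $\partial_t$ fails to be uniformly timelike up to $r = r_e < 2M$, one must further augment the multiplier by a red-shift vector field of the Dafermos--Rodnianski type, supported in a neighborhood of the horizon, whose commutator structure with $\Box_{\mathbf S}$ uses the positive surface gravity at $r=2M$ to produce a positive bulk integral in that region and control the missing boundary flux.

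Finally, to interpret the inhomogeneous term on the right, I would split $f = \chi_{ps}f + (1-\chi_{ps})f$. The second piece pairs with the Morawetz current in the standard $LE^* / LE^1$ duality. For the first piece, the factor $(1-\tfrac{3M}{r})$ vanishing in the multiplier weight against the trapped region gives a pairing bounded by $\|\chi_{ps}f\|_{(1-\frac{3M}{r})L^2}$, while the $H^1$ alternative in $LE_{\mathbf S}^*$ is obtained by commuting with $\partial_t$ (which is Killing and hence commutes with $\Box_{\mathbf S}$), integrating by parts in $\tv$, and trading a derivative on $f$ for the missing vanishing factor in a cutoff neighborhood of the photon sphere. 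The main obstacle is the simultaneous calibration of the three multipliers so that their bulk terms add constructively in the transition regions between the horizon, the photon sphere, and spatial infinity, without the boundary terms at $\tv = 0, T$ spoiling the uniform-in-$\tv$ energy bound; this is the delicate combinatorial bookkeeping carried out in \cite{MMTT}.
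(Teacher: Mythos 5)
This theorem is not proved in the paper; it is quoted as a result from \cite{MMTT}. However, the structure of that proof is recalled in the proof of Theorem~\ref{LEpert}, where the multiplier is a single vector field $X = b(r)\partial_r + c(r)\partial_{\tv} + q(r)$, chosen so that the spacetime quadratic form $Q^{\mathbf S}$ is nonnegative and degenerates exactly at $r=3M$, with the red-shift effect absorbed into the sign of $b$ near the horizon (the text notes ``$X$ looks like $-\partial_r$ near the event horizon''). Your sketch captures all the essential mechanisms -- degenerate Morawetz coefficient vanishing at the photon sphere, a $\partial_{\tv}$ component for the energy flux, favorable sign of the lateral boundary term on $\Sigma^+$ from the red-shift, and the $H^1 + (1-\tfrac{3M}{r})L^2$ dual pairing obtained by trading a $\partial_t$ derivative near the trapped set -- but you present them as three separate multipliers calibrated to add constructively, whereas \cite{MMTT} builds a single combined vector field from the start. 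These are genuinely equivalent routes: the single-multiplier packaging makes the positivity of the total bulk form a condition on one scalar coefficient choice, which avoids the ``combinatorial bookkeeping'' you flag as the main obstacle, while your modular version makes the role of each ingredient (photon sphere, horizon, infinity) more transparent at the cost of a harder gluing step. One small caveat: the Schwarzschild surface gravity that drives the red-shift is positive precisely because $\partial_r g^{rr}_{\mathbf S} > 0$ at $r = 2M$, and that is what guarantees the correct sign of the near-horizon bulk term; it is worth making this explicit rather than leaving it as ``commutator structure.''
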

As written there is a loss of one derivative at $r = 3M$. This can be 
improved to an $\epsilon$ loss, or even to a logarithmic loss, 
see \cite{MMTT}, but that is not so relevant for our purpose here.

In the case of Kerr, the trapped rays are no longer localized on a
sphere.  However, if $a/M \ll 1$ then they are close to the sphere
$\{r = 3M\}$. 

We now recall the setup and results from \cite{TT} for the Kerr
spacetime. Let $\tau, \xi, \Phi$ and $\Theta$ be the Fourier variables
corresponding to $t, r, \phi$ and $\theta$, and
\[
\begin{split}
  p_{\mathbf K}( r,\theta,\tau, \xi, \Phi,\Theta) = & \
  g^{tt}\tau^2+2g^{t\phi}\tau\Phi+g^{\phi\phi}\Phi^2 +g^{rr}\xi^2
  +g^{\theta\theta}\Theta^2 \\  = & \  g^{tt}(\tau -\tau_1( r, \theta,
  \xi, \Phi,\Theta)) (\tau -\tau_2( r, \theta, \xi, \Phi,\Theta))
\end{split}
\]
be the principal symbol of $\Box_{\mathbf K}$. Here $\tau_1$, $\tau_2$
are real distinct smooth $1$-homogeneous symbols. It is known that
all trapped null geodesics in $r>r_+$ satisfy
\begin{equation}\label{Reqn}
  R_a(r,\tau,\Phi)= 0 
\end{equation}
where
\[
R_a(r,\tau,\Phi) = (r^2+a^2)(r^3-3Mr^2+a^2r+a^2M)\tau^2 -
2aM(r^2-a^2)\tau\Phi - a^2(r-M)\Phi^2.
\]
 
Let $r_a(\tau,\Phi)$ be root of \eqref{Reqn} near $r = 3M$, which can
be shown to exist and be unique for small $a$. Then define the symbols
\[
c_i ( r, \theta, \xi, \Phi,\Theta) = r- r_a(\tau_i,\Phi), \qquad i =
1,2
\]
and the associated space-time norms:
 
 \[
 \| u\|_{L^2_{c_i}}^2 = \| c_i (D,x) u\|_{L^2}^2 + \|u\|_{H^{-1}}^2
 \]
 \[
 \| g\|_{c_i L^2}^2 = \inf_{c_i(x,D) g_1 + g_2 = g} (\|g_1\|_{L^2}^2 +
 \|g_2\|_{H^1}^2).
 \]
The replacements of the $LE$ and $LE^*$ norms are
 \[
 \begin{split}
   \|u\|_{LE_{\mathbf K}^1} = & \ \| \chi_{ps}(D_t - \tau_2(D,x))c_1 (D,x)\chi_{ps}
   u\|_{L^2}^2\\ & \  + \| \chi_{ps}(D_t - \tau_1(D,x))c_2 (D,x)\chi_{ps} u\|_{L^2}^2 
   \\ & \ +  \|\partial_r u\|_{LE}  +
   \|(1-\chi^2) \nabla u\|_{LE}+ \|r^{-1} u\|_{LE}
\\
 \|f\|_{LE_{\mathbf K}^*} = & \| (1-\chi_{ps}) f\|_{LE^*} + \|\chi_{ps} f\|_{c_1
   L^2 \cap  c_2 L^2}.
\end{split}
 \]
 Then the main result in \cite{TT} asserts that
 
\begin{theorem}\label{Kerr}
  Let $u$ solve $\Box_{\mathbf K} u = f$ in $\M$. Then
  \begin{equation}
 \|u \|_{LE^1_{{\mathbf K}}} +\sup_{\tilde v \geq 0} \| \nabla u(\tv)\|_{L^2}
 \lesssim\|\nabla u(0)\|_{L^2}  + \| f \|_{LE^*_{\mathbf K}}.
    \label{Kest}\end{equation}
 \end{theorem}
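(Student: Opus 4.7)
The plan is to combine a phase-space Morawetz multiplier adapted to the frequency-dependent trapped set with vector field energy estimates handling the event horizon and the ergoregion. The key new feature compared to Schwarzschild (Theorem~\ref{Swz}) is that the trapped null geodesics in Kerr no longer all lie on a single sphere; instead, for each pair of angular parameters $(\tau,\Phi)$ there is a trapping radius $r_a(\tau,\Phi)$ near $3M$, characterized as the root of $R_a(r,\tau,\Phi)$. The symbols $c_i(x,D)$ appearing in the $LE^1_{\mathbf K}$ norm quantify microlocal distance from the trapped set along each of the two characteristic surfaces $\tau = \tau_i$.

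First I would exploit the two Killing symmetries $\partial_t$ and $\partial_\phi$ of the Kerr metric by taking a partial Fourier transform in these variables, or equivalently by working with pseudodifferential operators whose symbols depend on the dual variables $(\tau,\Phi)$. This reduces the problem to a family of two-dimensional problems in $(r,\theta)$ parametrized by $(\tau,\Phi)$. Next, I would construct a pseudodifferential Morawetz multiplier $A = a(x,D)$ of the schematic form $a = f(r,\tau,\Phi)\xi$, with $f$ chosen so that the Poisson bracket $\{p_{\mathbf K}, a\}$ is nonnegative on the characteristic variety and vanishes to the right order precisely at $r = r_a(\tau,\Phi)$. The commutator $i[\Box_{\mathbf K}, A]$, paired with $u$ and integrated by parts, would then reproduce the bulk terms $\|\partial_r u\|_{LE}^2 + \|(1-\chi_{ps})\nabla u\|_{LE}^2 + \|r^{-1}u\|_{LE}^2$, with the loss at trapping measured by the two $\chi_{ps}(D_t - \tau_j)c_i(D,x)\chi_{ps} u$ terms in the definition of $\|u\|_{LE^1_{\mathbf K}}$; the dual $c_i L^2$ norms on the right-hand side accommodate exactly the error terms produced by the pseudodifferential calculus at the trapped set.

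For the uniform energy bound and the treatment of the event horizon, I would use as multiplier a combination of the form $\partial_{\tv} + \omega_+ \partial_{\tphi}$, which is timelike at the horizon (this is the red-shift effect), interpolated with $\partial_{\tv}$ away from the horizon, producing a globally causal multiplier for $a/M \ll 1$. The ergoregion, where $\partial_{\tv}$ fails to be timelike, is then a small neighborhood of the horizon in which the combination above remains timelike, so the superradiance obstruction is avoided; any residual small negative contribution can be absorbed into the positive bulk term from the Morawetz multiplier. The coercivity of the resulting quadratic form, combined with Hardy inequalities to pass between $u$ and $r^{-1}u$ weights, then yields both the $\sup_{\tv} \|\nabla u(\tv)\|_{L^2}$ bound and the $LE^1_{\mathbf K}$ estimate.

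The main obstacle is the construction of the pseudodifferential Morawetz multiplier so that the commutator is positive except at the trapped set, with the vanishing at trapping matched precisely to the $c_i$ symbols. This requires the trapping to be normally hyperbolic, a property that holds for small $a$ as a perturbation of the Schwarzschild situation, together with transversality of the Hamilton flow of $p_{\mathbf K}$ to the level sets of a suitable symbol in $r$. The frequency dependence of $r_a(\tau,\Phi)$ means that the Morawetz multiplier cannot be taken to be a simple differential operator as in Schwarzschild; the smoothness of $r_a$ for small $a$ and the explicit algebraic structure of $R_a$ make the construction feasible, but the error terms from the pseudodifferential calculus must be controlled with the correct microlocal weights to fit into the $LE^*_{\mathbf K}$ norm on the right, which is the technical heart of the argument.
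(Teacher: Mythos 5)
The theorem you are asked to prove is not proved in this paper: it is stated as the main result of Tataru--Tohaneanu~\cite{TT} and imported as a black box. What the paper does reveal about the [TT] argument appears in the proof of Theorem~\ref{LEpertw}: there is a pseudodifferential operator $S_1$ of order $1$, differential in $\tv$, of the form $S_1 = X + \chi_{ps} s^w \chi_{ps}$ with $X$ the Schwarzschild-type vector field from \eqref{Xdef}, for which a positive commutator identity \eqref{intdivk} holds on the slab $\M_{[0,\tv_0]}$ with bulk terms bounding $\|u\|^2_{LE^1_{{\mathbf K},w}}$ from below and boundary terms comparable to $\|\nabla u(\tv_i)\|^2_{L^2}$ and $\|u\|^2_{H^1(\Sigma^+_{[0,\tv_0]})}$. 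Your sketch --- a pseudodifferential Morawetz multiplier adapted to the frequency-dependent trapping radius $r_a(\tau,\Phi)$, Poisson bracket positivity on the characteristic variety, a red-shift mechanism at the horizon, and perturbative control of the ergoregion for small $a/M$ --- is this strategy in outline, so you have the right picture.

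Two technical points where your sketch diverges from what the paper describes about [TT]. First, you propose passing to the Fourier side in both $t$ and $\phi$; [TT] instead requires $S_1$ to be a \emph{differential} operator in $\tv$, precisely so that the commutator identity can be integrated by parts on a finite slab $\{0 < \tv < \tv_0\}$ to produce well-defined fixed-time boundary terms (which give $\sup_{\tv}\|\nabla u(\tv)\|_{L^2}$) and a lateral boundary term at $r = r_e$. The paper stresses that conditions (a) and (b) on $S_1$ are what guarantee these boundary terms make sense; a purely Fourier-in-$t$ formulation would not deliver them directly. Second, the multiplier near the horizon in [TT] is the $\partial_r$ piece of $X$ (the proof of Lemma~\ref{l:eh} remarks that ``$X$ looks like $-\partial_r$ near the event horizon''), not the Hawking vector field $\partial_{\tv} + \omega_+ \partial_{\tphi}$ you propose; both exploit the red-shift, but with $X$ the horizon positivity comes out of the same integration by parts that produces the Morawetz bulk term, whereas your version requires a separate energy estimate to be glued to the Morawetz estimate. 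Beyond these choices, the genuinely hard step --- constructing $s^w$ so that the commutator symbol is nonnegative with vanishing order exactly matched to the $c_1$, $c_2$ factors and the dual space $c_1 L^2 \cap c_2 L^2$ on the right --- is, as you correctly identify, the technical heart of [TT], and the sketch does not carry it out.
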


 This is the direct counterpart of \eqref{Sest}, which corresponds to
$\tau_1 = -\tau_2$ and
 $c_1 = c_2 = r-3M$. Again, the loss of one derivative can be 
improved to an $\epsilon$ loss, or even to a logarithmic loss, 
see \cite{Mihai}.

\subsection{Stationary local energy decay for small 
perturbations of Schwarzs\-child} \label{ss:sle} Here we consider a  Lorentzian metric $g$ in $\M$ 
which is a small perturbation of Schwarzschild expressed in
 the $(\tv,r,\phi,\theta)$ coordinates. Our main result is as follows:
 
\begin{theorem}\label{LEpert}
  Let $g$ be a Lorentzian metric on $\M$ and $u$ a smooth function in $\M$.

a)  Let $\chi_{ps}$  be a smooth cutoff function which selects a small neighbourhood of the photon sphere $\{ r = 3M\}$. If
  \begin{equation}\label{aproxmetinfty}
    |\partial_\alpha [g_{\mu\nu}-(g_{\mathbf K})_{\mu\nu}]| \lesssim \epsilon \la r\ra^{-|\alpha|-}, \qquad 0\leq |\alpha| \leq 1
\end{equation}
  for a small enough $\epsilon$, then the stationary local energy
  bound holds for $u$:
  \begin{equation}\label{st.est}
    \| u\|_{LE^1[\tv_0,\tv_1]} \lesssim \|\nabla u(\tv_0)\|_{L^2} + \|\nabla u(\tv_1)\|_{L^2}+ 
    \| \Box_g u \|_{LE^*[\tv_0,\tv_1]} + \|\chi_{ps} \partial_{\tv} u\|_{LE[\tv_0,\tv_1]}.
  \end{equation}

b) If in addition  
\begin{equation}\label{aproxmetinfty2}
 |\partial^\alpha g^{\mu\nu}| \lesssim r^{-1-}, \qquad 1 \leq |\alpha| \leq k+1 
\end{equation}
then \eqref{sle} also holds.

\end{theorem}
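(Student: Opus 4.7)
The plan is to treat $g$ as a small perturbation of a reference Kerr metric $g_{\mathbf K}$ with $a/M \ll 1$ (taking $a = 0$ recovers the Schwarzschild case), so that the Kerr local energy estimate \eqref{Kest} applies to the base. Writing $\Box_g u = f$ as $\Box_{\mathbf K} u = f - Qu$ with $Q := \Box_g - \Box_{\mathbf K}$ carrying coefficients of size $O(\epsilon \la r \ra^{-\delta})$ by \eqref{aproxmetinfty}, I would first apply the forward Kerr bound on $[\tv_0,\tv_1]$---by stationarity this is permissible on any such interval, and the estimate already controls $\sup_{\tv \in [\tv_0,\tv_1]} \|\nabla u(\tv)\|_{L^2}$, subsuming the $\|\nabla u(\tv_1)\|_{L^2}$ term on the right of \eqref{st.est}. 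Part a) then reduces to two tasks: (i) estimating $\|Qu\|_{LE^*_{\mathbf K}[\tv_0,\tv_1]}$ so as to absorb it into the left hand side, and (ii) upgrading the trapping-degenerate $LE^1_{\mathbf K}$ norm on the left to the full $LE^1$ norm.

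For task (i), away from the photon sphere (against $1-\tilde\chi_{ps}$) the coefficients of $Q$ carry the weight $\epsilon \la r\ra^{-\delta}$; integrating by parts to shift one derivative off $u$ yields $\|(1-\tilde\chi_{ps})Qu\|_{LE^*} \lesssim \epsilon \|u\|_{LE^1}$, absorbable for $\epsilon$ small. Near the photon sphere, the $LE^*_{\mathbf K}$ norm permits an $H^1$-loss, so we bound $\|\chi_{ps} Qu\|_{H^1} \lesssim \epsilon \|\tilde\chi_{ps} u\|_{H^2}$ and control the right hand side by elliptic regularity on the $\tv$-slices: since $\partial_{\tv}$ is time-like near $\{r \approx 3M\}$ for small $a/M$, the spatial part of $\Box_g$ is elliptic and
\[
\|\tilde\chi_{ps} u\|_{H^2} \lesssim \|\tilde\chi_{ps} \Box_g u\|_{L^2} + \|\tilde\chi_{ps} \partial_{\tv} \nabla u\|_{L^2} + \|\tilde\chi_{ps} u\|_{H^1}.
\]
The $\partial_{\tv} \nabla u$ piece is then handled by commuting $\partial_{\tv}$ through and reapplying the slice elliptic estimate to $\partial_{\tv} u$, which contributes $\|\chi_{ps} \partial_{\tv} u\|_{LE}$---the very term on the right of \eqref{st.est}. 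Task (ii) is accomplished by the same mechanism: $\|\chi_{ps} \nabla u\|_{LE}$ is recovered from $\|\chi_{ps} \partial_{\tv} u\|_{LE}$, the source $\|f\|_{LE^*}$, and pieces absorbable by smallness.

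For part b), commute the equation with $\partial^\alpha$ for $|\alpha| \le k$. The additional regularity \eqref{aproxmetinfty2} supplies $r^{-1-}$ decay on all derivatives of $g^{\mu\nu}$ through order $k+1$, so the commutators $[\Box_g,\partial^\alpha] u$ are of the same short-range perturbative type as $Qu$, involving at most $|\alpha|$ derivatives of $u$. Applying part a) inductively to $\partial^\alpha u$ then yields \eqref{sle}: the $\|\chi_{ps} \partial_{\tv} \partial^\alpha u\|_{LE}$ terms are absorbed into $\|\partial_t u\|_{LE^{0,k}}$ on the right of \eqref{sle}, and lower-order contributions from commutators close via the induction hypothesis.

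The main obstacle I anticipate is the rigorous implementation of the near-trapping elliptic estimate---verifying that the spatial ellipticity of $\Box_g$ is preserved under the perturbation (by continuity for $\epsilon$ small) and, more delicately, tracking how commuting $\partial_{\tv}$ through the slice ellipticity interacts without spawning uncontrollable error terms. A secondary difficulty is organizing the off-trapping integration-by-parts bounds so that the $\la r \ra^{-\delta}$ smallness assembles uniformly across dyadic annuli in the $LE^*$ norm, which is the standard mechanism by which such perturbations close.
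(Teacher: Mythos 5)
Your plan has two genuine gaps, one in each part.

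In part (a), the black-box strategy of citing \eqref{Kest} and absorbing the error $\|Qu\|_{LE^*_{\mathbf K}}$ cannot be made to close as written, because you are trying to bound a \emph{norm} of $Qu$ rather than the bilinear pairing of $Qu$ against a multiplier. You claim ``integrating by parts to shift one derivative off $u$ yields $\|(1-\tilde\chi_{ps})Qu\|_{LE^*}\lesssim\epsilon\|u\|_{LE^1}$,'' but integration by parts is not available inside a norm; directly, $Q$ is second order, so $\|(1-\tilde\chi_{ps})Qu\|_{LE^*}\lesssim\epsilon\|\nabla^2 u\|_{LE^*}$, a full derivative loss. Near the photon sphere the same miscount reappears: since $Q$ is second order, $\|\chi_{ps}Qu\|_{H^1}\lesssim\epsilon\|\tilde\chi_{ps}u\|_{H^3}$, not $H^2$, and even granting the $H^2$ bound, the slice-elliptic estimate produces the term $\|\tilde\chi_{ps}\partial_{\tv}\nabla u\|_{L^2}$ (two derivatives), which reapplying ellipticity to $\partial_{\tv}u$ only regresses to $\partial_{\tv}^2\nabla u$, never descending to the one-derivative quantity $\|\chi_{ps}\partial_{\tv}u\|_{LE}$ that actually appears in \eqref{st.est}. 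The paper avoids all of this precisely by \emph{not} using \eqref{Kest} as a black box: it reruns the Morawetz computation with the first-order differential multiplier $X_1 = X + \delta\chi_{ps}^2$, so that the error appears only as the bilinear form $\int(\Box_{\mathbf S}-\Box_g)u\cdot X_1u\,dV$, which can be symmetrized by integration by parts into a quadratic form in $\nabla u$ (no second derivatives); and near $r=3M$ it uses the Lagrangian correction at the $H^1$ level --- the pointwise bound $|\chi_{ps}\nabla u|^2\lesssim\chi_{ps}^2 g^{\mu\nu}\partial_\mu u\partial_\nu u + C|\chi_{ps}\partial_t u|^2$, valid because $\partial_t$ is timelike there --- rather than an $H^2$ elliptic estimate on slices. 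Your instinct that timelikeness of $\partial_{\tv}$ near the photon sphere is the key is exactly right, but it must be exploited one derivative lower.

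In part (b) you omit the event horizon entirely. Commuting with general spatial $\partial^\alpha$ and ``applying part (a) inductively'' fails near $r=2M$, because there $\partial_{\tv}$ is \emph{not} timelike, the spatial part of $\Box_g$ is not elliptic, and the commutator $[\Box_g,\partial_r]$ is not an $O(\epsilon)$ perturbation --- it contains the term $-(\partial_r g^{rr}_{\mathbf S})\partial_r^2$ with a coefficient of order one. The paper's treatment (Lemma~\ref{l:eh}) is a separate, localized argument exploiting the red-shift: the key observation that $\gamma=\partial_r g^{rr}_{\mathbf S}>0$ near $r=2M$ turns $\Box_g-\gamma_1 X$ (with $X\sim-\partial_r$ there) into an operator satisfying the same energy estimate, which permits closing the $\partial_r$-commutation. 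The smallness condition \eqref{aproxmetinfty2} on the perturbation is used only after this sign is in hand; it does not by itself make the commutator small. Without an argument of this type, the induction in (b) does not close near the horizon.
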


\begin{proof}
  a) The argument is based on the computation in \cite{MMTT}, which
  only requires the use of vector fields. We will first prove the
  estimate in the Schwarzschild case, but do it in such a way so that
  the transition to $g$ is perturbative.

  Let $X$ be a differential operator
  \begin{equation}\label{Xdef}
    X = b(r)\partial_r + c(r)\partial_{\tv} + q(r)
  \end{equation}
  for some smooth functions $b, c, q : [r_e, \infty) \to \R$ 
with $c$ constant outside a compact region and $b$, $q$ satisfying
\begin{equation}\label {condonX}
    \begin{split}
    |\partial_r^\alpha b| \leq c_\alpha r^{-|\alpha|} \\
      |\partial_r^\alpha q| \leq c_\alpha r^{-1-|\alpha|}.
    \end{split}
 \end{equation} 
  Let $\M_{[\tv_0,\tv_1]} = \{ \tv_0 < \tv < \tv_1,\ r > r_e \}$, and let
  $dV_{\mathbf S}=r^2 dr d\tv d\omega$ denote the Schwarzschild
  induced measure. It was shown in \cite{MMTT} that one can find $X$
  as above so that
  \begin{equation}\label{intdivs}
    \int_{\tv_0}^{\tv_1} Q^{\mathbf S}(\tv) d\tv = 
    - \int_{\M_{[\tv_0,\tv_1]}} \Box_{\mathbf S} u \cdot  Xu \  dV_{\mathbf S}
    - \left. BDR^{\mathbf S}[u]\right|_{\tv = \tv_0}^{\tv = \tv_1} - \left. BDR^{\mathbf S}[u] \right|_{r=r_e}
  \end{equation} 
  with
  \[
  \int_{\tv_0}^{\tv_1} Q^{\mathbf S}(\tv) d\tv \gtrsim 
\| u\|_{LE^1_{{\mathbf S},w}[\tv_0,\tv_1]}^2
  \]
where
\[
\| u\|_{LE^1_{{\mathbf S},w}} = \| (r-3M) \chi \nabla 
   u\|_{L^2}^2 +
 \|r^{-2}\partial_r u\|_{L^2}^2 + \|r^{-1} u\|_{L^2}^2 +
   \|(1-\chi^2) \nabla u\|_{L^2}^2
\]
 and the boundary terms satisfy
  \[
\begin{split}
  \left. BDR^{\mathbf S}[u]\right|_{\tv = \tv_i} \approx & \  \|\nabla u(\tv_i)\|_{L^2}^2,
  \qquad i=1,2
  \\
  \left. BDR^{\mathbf S}[u]\right|_{r=r_e} \approx & \ \| u\|_{H^1(\Sigma^+_{[\tv_0,\tv_1]})}^2.
\end{split}  
\]
Comparing the $LE_{{\mathbf S},w}^1$ norm we obtain from this
computation with the $LE^1$ norm which we need, one sees that two
improvements are necessary, one near $r= 3M$ and another for large
$r$.

The improvement for large $r$ is a consequence of the fact that for large $r$ 
one can view the Schwarzschild metric as a small perturbation of 
the Minkowski metric. Precisely, from the estimate \cite{MT1}(2.3)
(see also \cite{MT}) we have for large $R$
\begin{equation}
\| \chi_{> R} u\|_{LE_1[\tv_0,\tv_1]}\lesssim  \| u\|_{LE^1_{{\mathbf S},w}[\tv_0,\tv_1]} +
\|\chi_{>R} \Box_{\mathbf S} u \|_{LE[\tv_0,\tv_1]}+ \|\nabla u(\tv_0)\|_{L^2}.
\label{highr}\end{equation}
The similar bound for the metric $g$ is also valid.

To gain the improvement for $r$ close to $3M$ we add a Lagrangian
correction term. Precisely, a direct integration by parts yields
\[
\int_{\M_{[\tv_0,\tv_1]}} \!\!\!\!\!\!\!\! \Box_{\mathbf S} u \cdot \chi_{ps} ^2 u \ dV_{\mathbf S} =
\int_{\M_{[\tv_0,\tv_1]}}\!\!\!\!\!\!  \chi_{ps}^2 g_{\mathbf S}^{\mu\nu} \partial_{\mu} u \partial_{\nu} u +
(\Box_{\mathbf S} \chi_{ps}^2) u^2\ dV_{\mathbf S} + \left.\int_{\M_\tv} \!\!\!\!  \chi_{ps}^2 g_{\mathbf S}^{00}
  u \partial_t u dx \right|^{\tv = \tv_1}_{\tv = \tv_0}.
\]
Since $\partial_t$ is timelike in the support of $\chi_{ps}$, we can write 
the pointwise bound
\[
|\chi_{ps}  \nabla u|^2 \lesssim \chi_{ps}^2 g_{\mathbf S}^{\mu\nu} \partial_{mu} u \partial_{\nu} u + C|\chi_{ps} \partial_t u|^2
\]
for some large constant $C$.  Then the previous identity 
yields
\begin{equation} \label{r=3M}
\| \chi_{ps}  \nabla u\|_{L^2(\M_{[\tv_0,\tv_1]})}^2 \lesssim 
\int_{\M_{[\tv_0,\tv_1]}} \!\!\!\!\!\!\!\! 
\Box_{\mathbf S} u \cdot \chi_{ps}^2 u \ dV_{\mathbf S} + C \| \chi_{ps} \partial_t u\|_{L^2(\M_{[\tv_0,\tv_1]})}^2 + 
\sum_{i=1,2} \|\nabla u(\tv_i)\|_{L^2}.
\end{equation}

Combining \eqref{intdivs}, \eqref{highr} and \eqref{r=3M} we obtain
\begin{equation}\label{good-S}
\begin{split}
\  \|  u\|_{LE^1(\M_{[\tv_0,\tv_1]})}^2
& + \| u\|_{H^1(\Sigma^+_{[\tv_0,\tv_1]})}^2 + \|\nabla u(\tv_1)\|_{L^2} ^2 \lesssim
- \int_{\M_{[\tv_0,\tv_1]}} \!\!\!\!\!\!\!
\Box_{\mathbf S} u \cdot  X_1 u \  dV_{\mathbf S} 
  \\ &   + \|\nabla u(\tv_0)\|_{L^2} ^2   
+C \| \chi_{ps} \partial_t  u\|_{L^2(\M_{[\tv_0,\tv_1]})}^2
 + \|\chi_{>R} \Box_{\mathbf S} u\|_{LE^*(\M_{[\tv_0,\tv_1]})}^2
\end{split}
\end{equation}
where 
  \[
  X_1 = X+\delta \chi^2_{ps}(r) 
  \]
  with a fixed small constant $\delta$.  At this point, the desired
  conclusion \eqref{st.est} for the Schwarzschild metric follows
  if we  estimate the integral term by $\| \Box_{\mathbf S} u\|_{LE^*}
  \|u\|_{LE^1}$.

  It remains to show that a similar estimate holds with $\Box_{\mathbf S}$
  replaced by $\Box_g$.  This substitution is easily made in the last
  term on the right by performing a similar substitution in
  \eqref{highr}. Consider now the difference in the integral term,
\[
D= \int_{\M_{[\tv_0,\tv_1]}} 
(\Box_{\mathbf S}-\Box_g) u \cdot  X_1 u \  dV_{\mathbf S}.
\]
To estimate this we use the bound \eqref{aproxmetinfty} to write 
\[
\Box_{\mathbf S}-\Box_g = \partial_{\mu} (g^{\mu\nu}_{\mathbf S} - g^{ij}) \partial_{\nu} + O(\epsilon r^{-1-}) \nabla.
\]
Then we integrate by parts in a standard manner. Using also Hardy
type inequalities we obtain
\[
|D| \lesssim \epsilon (\sum_{i=1,2}
\|\nabla u(\tv_i)\|_{L^2}^2 + \| u\|_{H^1(\Sigma^+_{[\tv_0,\tv_1]})}^2
+ \|u\|_{LE^1(\M_{[\tv_0,\tv_1]})}^2).
\]
Hence \eqref{good-S} for $\Box_g$ follows, and the proof 
of the stationary local energy bound \eqref{st.est} is concluded.

b) The proof follows closely that of Theorem 4.5 in \cite{TT}, but for
the sake of completeness we include it here.  The result will follow
by induction on $k$. The case $k=0$ is part (a) of the theorem. We
will prove the case $k=1$, and the rest follows in a similar manner.

We need to estimate $\| \nabla^2 u\|_{LE[\tv_0,\tv_1]}$. We already control
 $\| \nabla \partial_\tv u\|_{LE[\tv_0,\tv_1]}$, therefore it remains to estimate 
the second order spatial derivatives. We write 
\[
\Box_g u = L_1 \partial_\tv u  + L_2 u
\]
where $L_1$ is a first order operator and $L_2$ is a purely spatial second 
order operator. Then we have 
\[
\|  L_2 u \|_{LE[\tv_0,\tv_1]} \lesssim \|  \Box_g u \|_{LE[\tv_0,\tv_1]} + 
\| L_1 \partial_\tv u\|_{LE[\tv_0,\tv_1]} 
\]
which is favorable where $L_2$ is elliptic. But $L_2$ is elliptic wherever 
$\partial_\tv$ is time-like. Since $g$ is a small perturbation
of $g_{\mathbf S}$,  this happens everywhere outside a small neighbourhood
of $r = 2M$. Thus we have the elliptic estimate
\begin{equation}\label{h2out}
  \| \chi_{out} \nabla^2 u \|_{LE[\tv_0,\tv_1]} \lesssim  \|  \Box_g u \|_{LE[\tv_0,\tv_1]} + 
\| \nabla \partial_\tv u\|_{LE[\tv_0,\tv_1]}  +  \| \nabla u\|_{LE[\tv_0,\tv_1]}
\end{equation}
where $\chi_{out}$ selects the region $\{ r > 2M+\delta\}$.

It remains to estimate $\| \nabla^2 \chi_{eh} u\|_{L^2}$ where 
$\chi_{eh}$ is a smooth cutoff function which selects the region 
$ \{ r < 2M+ 2\delta\}$ near the event horizon. 
The function $v = \chi_{eh} u$ solves the equation
\[
\Box_g v = h:= \chi_{eh} f + [\Box_g,\chi_{eh}]
\]
where the second term on the right is controlled in $H^1$ 
via \eqref{h2out}. Hence the conclusion of part (b) of the Proposition 
would follow from the following 

\begin{lemma}\label{l:eh}
  Let $\M_{eh}= \{ r_e  < r < 2M+3\delta, \ \tv \geq 0\}$ with a
  fixed small $\delta$.  Let $g$ be an $O(\epsilon)$ perturbation of
  $g_{\mathbf S}$ in $C^{m+1}(\M_{eh})$ with $\epsilon$ sufficiently small. Then
  for all functions $v$ with support in $\{ r < 2M+3\delta\}$ we have
\begin{equation}
\|  \nabla v\|_{H^{m}(\M_{eh})} \lesssim \| \nabla v(0)\|_{H^m} + 
\|  \Box_g v\|_{H^m(\M_{eh})} 
\label{ev-energy}
\end{equation} 
The similar estimate holds in any interval $[\tv_0,\tv_1]$.
\end{lemma}

\begin{proof}
  This is an estimate which is localized near the event horizon, and
  we will prove it taking advantage of the red shift effect. In microlocal terms,
the red shift effect is equivalent to exponential energy decay along 
the light rays which stay on the event horizon, and small perturbations thereof.
But for this estimate, these are all light rays of interest. All others 
exit the domain $\M_{eh}$ in a finite time.

We begin with a simplification. If $\epsilon$ is small enough then for 
$r < 2M - \delta$ the $r$ spheres are uniformly time-like, therefore 
we can use standard local  energy estimates for the wave equation 
to reduce the problem to the case when $r_e =  2M - 2\delta$.

For $m = 0$ the above bound follows from part (a) of the Proposition.
For $m=1$ we commute $\Box_g$ with the vector fields $\partial_\tv$, 
$\Omega$ and $\partial_r$.  We have
\[
[\partial_\tv,\Box_g] = O(\epsilon) Q_2 , \qquad [\Omega,\Box_g] = O(\epsilon) Q_2
\]
for some second order partial differential operator $Q_2$ with bounded
coefficients.  Hence applying \eqref{ev-energy} with $m=0$ to
$\partial_\tv v$ and $\Omega v$ we obtain
\begin{equation}\label{tovf}
\| \Omega v\|_{H^1(\M_{eh})} + \| \partial_\tv v\|_{H^1(\M_{eh})}
 \lesssim \|h\|_{H^1(\M_{eh})}  
+ \epsilon \| v\|_{H^2(\M_{eh})}.
\end{equation}

  We still need to bound $\partial_{r} v$. For that we compute the
  commutator
  \begin{equation}
    [\Box_g, \partial_r]= -(\partial_r g_{\mathbf S}^{rr})\partial_{r}^2  +  O(\epsilon) Q_2 + N_2
    \label{drcom}\end{equation}
  where $N_2$ stands for a second order operator with no $\partial_r^2$
  terms. The key observation here is that $\gamma=\partial_r g^{rr}_{\mathbf S} >
  0$ near $r = 2M$. We can now write
  \[
  (\Box_g - \gamma_1 X )\partial_r v = \partial_r h + (O(\epsilon) Q_2 + N_2) v, 
\qquad \gamma_1 > 0
 \]
  with $N_2$ as above and most importantly, a positive coefficient
  $\gamma_1$.  We recall here that $X$ looks like $-\partial_r$ near
  the event horizon.   Because of this the operator
  \[
  B= \Box_g - \gamma_1 X
  \]
  satisfies the same estimate  \eqref{st.est} as $\Box_g$ for
  functions supported near the event horizon. To see this it suffices 
to examine  \eqref{good-S}  with $\Box_{\mathbf S}$ replaced by $\Box_g$. 
Since $X = X_1$ near the event horizon, it follows that the contribution of 
$\gamma_1 X$ has the right sign and can be discarded.
Hence we obtain
\begin{equation}\label{rvf}
\begin{split}
\|\partial_r v\|_{H^1(\M_{eh})} 
 \lesssim & \ \|h\|_{H^1(\M_{eh})}  
+ \epsilon \| v\|_{H^2(\M_{eh})} \\ & \ 
+ \| \Omega v\|_{H^1(\M_{eh})} + \| \partial_\tv v\|_{H^1(\M_{eh})}+ \|  v\|_{H^1(\M_{eh})}
\end{split}
\end{equation}
where the last three terms account for the effect of $N_2$.
Combining the bounds \eqref{tovf} and \eqref{rvf} we obtain
\[
\| v\|_{H^2(\M_{eh})} 
 \lesssim \|h\|_{H^1(\M_{eh})}  
+ \epsilon \| v\|_{H^2(\M_{eh})}+
 \|  v\|_{H^1(\M_{eh})}.
\]
If $\epsilon$ is sufficiently small then the conclusion \eqref{ev-energy}
follows for $m = 1$. The argument for $m > 1$ is similar.
\end{proof}
\end{proof}

\subsection{Local energy decay for 
small perturbations of  Kerr} \label{ss:le}

Here we consider small perturbations of a Kerr space-time with small
angular momentum, $|a| \ll M$. Our main result is as follows:

\begin{theorem}\label{LEpertw}
  Let $g$ be a Lorentzian metric on $\M$ and $u$ a function in $\M$
  solving $\Box_g u = f$.

  a) Assume that $g$ satisfies \eqref{aproxmetinfty} and decays to
  $g_{\mathbf K}$ near the photon sphere,
  \begin{equation}\label{aproxmetcpt}
    \chi_{ps} |\partial_\alpha [g_{\mu\nu}-(g_{\mathbf K})_{\mu\nu}]|\leq c_\alpha (\tv), 
\qquad 0\leq |\alpha| \leq 1 
  \end{equation} 
  where $c_\alpha \in L_{\tv}^1$ (in particular, we can take $c_\alpha
  = \la \tv \ra ^{-1-}$). Then the weak local energy estimate holds:
  \begin{equation}\label{lee}
    \|u\|_{H^1(\Sigma_R^+)}^2 +\sup_{\tilde t \geq 0}
    \|\nabla u(\tilde t)\|_{L^2}^2 +  \|u\|_{LE^1_{\mathbf K}}^2 \lesssim \|\nabla u(0)\|_{L^2}^2 
 +    \|f\|_{LE^*_{\mathbf K}}^2.
  \end{equation}

b) Assume in addition that \eqref{aproxmetinfty2} holds. Then
\begin{equation}\label{leek}
   \|u\|_{H^k(\Sigma_R^+)}^2 +\sup_{\tilde t \geq 0}
    \|\nabla u(\tilde t)\|_{H^k}^2 +  \|u\|_{LE^{1,k}_{\mathbf K}}^2 \lesssim \|\nabla u(0)\|_{H^k}^2 
 +    \|f\|_{LE^{*,k}_{\mathbf K}}^2,
  \end{equation}
and thus \eqref{lew} holds.
\end{theorem}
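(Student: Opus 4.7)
The approach for both parts is to view $\Box_g$ as a perturbation of $\Box_{\mathbf K}$, apply Theorem~\ref{Kerr} on time slabs, and absorb the resulting error by combining the $L^1_\tv$ integrability of the perturbation near the photon sphere with a Gronwall argument.

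For part~(a), I would rewrite the equation as $\Box_{\mathbf K} u = f - (\Box_g-\Box_{\mathbf K})u$ and apply a time-localized version of Theorem~\ref{Kerr} on $[0,T]$. (The localization is standard: multiply $u$ by a smooth $\tv$-cutoff and pass the commutator to the right-hand side, using the stationary local energy bound of Theorem~\ref{LEpert}(a) to estimate the commutator.) This produces
\[
E(T) \lesssim \|\nabla u(0)\|_{L^2} + \|f\|_{LE^*_{\mathbf K}[0,T]} + \|(\Box_g-\Box_{\mathbf K})u\|_{LE^*_{\mathbf K}[0,T]},
\]
where $E(T) = \sup_{[0,T]}\|\nabla u(\tv)\|_{L^2} + \|u\|_{LE^1_{\mathbf K}[0,T]}$. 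The last term is split using $\chi_{ps}$: on $\mathrm{supp}(1-\chi_{ps})$, hypothesis \eqref{aproxmetinfty} together with the large-$r$ perturbation estimate \eqref{highr} gives a contribution of size $\epsilon\,\|u\|_{LE^1[0,T]}$, which is absorbed on the left because $LE^1$ and $LE^1_{\mathbf K}$ agree outside the photon sphere. On $\mathrm{supp}\,\chi_{ps}$, by \eqref{aproxmetcpt} the coefficients of $\Box_g-\Box_{\mathbf K}$ and their first derivatives are bounded by $c(\tv)\in L^1_\tv$; writing the second-order part in divergence form $\partial_\mu(a^{\mu\nu}\partial_\nu u)$ and placing the result into the $H^1$-slot of the $c_iL^2$ norm yields a bound of the form $\int_0^T c(\tv)\,\|\chi_{ps}\nabla u(\tv)\|_{H^1}\,d\tv$. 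The resulting integral inequality $E(T)^2 \lesssim E(0)^2 + \|f\|_{LE^*_{\mathbf K}}^2 + \int_0^T c(\tv)E(\tv)^2\,d\tv$ then closes by Gronwall since $c\in L^1_\tv$.

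For part~(b), I would induct on $k$ by commuting $\Box_g$ with $\partial_\tv$, $\Omega$, and $\partial_r$, mirroring the proof of Lemma~\ref{l:eh}. The commutators $[\partial_\tv,\Box_g]$ and $[\Omega,\Box_g]$ are $O(\epsilon)$ second-order operators by \eqref{aproxmetinfty2} and can be absorbed by smallness. The $\partial_r$ commutator is handled by the red-shift computation: the sign $\partial_r g^{rr}_{\mathbf K} > 0$ at the event horizon persists for $|a|\ll M$ and $\epsilon$ small, and the correction term $\gamma_1 X$ enters \eqref{good-S} with the favorable sign so that the bound for the perturbed operator $B = \Box_g - \gamma_1 X$ survives. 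Away from the horizon, $\partial_\tv$ is uniformly timelike (for $|a|\ll M$ and $\epsilon$ small), so an elliptic estimate for the spatial part of $\Box_g$ recovers the missing spatial derivatives in terms of $\partial_\tv$ derivatives already controlled by the inductive hypothesis.

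The main obstacle is the photon-sphere step in part~(a). Because $c_i(x,D)$ vanishes microlocally on the trapped set, it cannot be used to absorb a top-order spatial derivative of $u$ produced by the perturbation; the entire second-order error must therefore be placed into the $H^1$-slot of $c_iL^2$. This is precisely what forces the hypothesis \eqref{aproxmetcpt} to control both the coefficient and its derivative, and also what explains why $L^1_\tv$ decay in $\tv$ — rather than mere smallness in $L^\infty_\tv$, which already sufficed for the stationary local energy decay in Theorem~\ref{LEpert}(a) — is needed here: the $L^1_\tv$ integrability is exactly what allows Gronwall to close despite the one-derivative loss at the trapped set.
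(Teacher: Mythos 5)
Your part~(b) is essentially the same in spirit as the paper's (commute with $\partial_\tv$, use ellipticity of the spatial part away from the horizon, and red-shift commutation near the horizon), so I will focus on part~(a), where there is a genuine gap.

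\textbf{The gap in part~(a).} Your strategy is to rewrite the equation as $\Box_{\mathbf K} u = f - (\Box_g-\Box_{\mathbf K})u$, apply Theorem~\ref{Kerr} as a black box, and place the error term $(\Box_g-\Box_{\mathbf K})u$ into the dual norm $LE^*_{\mathbf K}$. As you yourself observe, near the trapped set the symbol $c_i(x,D)$ vanishes, so the second-order part of the error cannot go into the $c_i L^2$ slot; it must go into the $H^1$ slot. That is exactly where the argument breaks. Placing a quantity of the form $a^{\mu\nu}\partial_\mu\partial_\nu u$ (with $|a|\lesssim c(\tv)$) into the $H^1$ slot of $c_i L^2$ requires control of \emph{three} derivatives of $u$; even in the $L^2$ slot it requires two. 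The resulting estimate
\[
E(T)^2 \lesssim E(0)^2 + \|f\|_{LE^*_{\mathbf K}}^2 + \int_0^T c(\tv)\,\|\chi_{ps}\nabla u(\tv)\|_{H^1}^2\,d\tv
\]
cannot close by Gronwall, because $\|\chi_{ps}\nabla u\|_{H^1}$ is a second-order quantity that is \emph{not} controlled by $E(\tv)$: the $LE^1_{\mathbf K}$ norm controls second derivatives only through the symbols $(D_\tv-\tau_j)c_i$, which degenerate precisely on the trapped set where $\chi_{ps}$ lives. Gronwall absorbs an integrable multiple of the controlled quantity; it cannot absorb an integrable multiple of a strictly stronger, uncontrolled quantity. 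The observation in your closing paragraph --- that $L^1_\tv$ integrability is what ``allows Gronwall to close despite the one-derivative loss'' --- is the heart of the mistake: $L^1_\tv$ smallness cannot repair a derivative loss.

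\textbf{How the paper avoids the loss.} Rather than treating the error as a generic right-hand side, the paper works \emph{inside} the multiplier identity \eqref{intdivk}, writing $-\int \Box_{g}u\cdot S_1 u\,dV_{\mathbf K} = -\int \Box_{\mathbf K}u\cdot S_1 u\,dV_{\mathbf K} + D$ and splitting $D = D_1 + D_{ps}$ according to the decomposition $S_1 = X_1 + S_{ps}$. Near the photon sphere, $\Box_{\mathbf K}-\Box_g$ is written in divergence form with coefficient and first derivatives both $O(c(\tv))$, and one integration by parts against the first-order multiplier $S_{ps}$ (using its differential/skew-symmetric structure to symmetrize the two-derivative term) yields
\[
|D_{ps}| \lesssim \int \tilde\chi_{ps}\, c(\tv)\bigl(|\nabla u|^2 + |u|^2\bigr)\,dV_{\mathbf K} + c(0)\|\nabla u(0)\|_{L^2}^2 + c(\tv_0)\|\nabla u(\tv_0)\|_{L^2}^2,
\]
which involves only \emph{first} derivatives of $u$. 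After discarding a compact initial time interval to arrange $\int_0^\infty c(\tv)\,d\tv \lesssim \epsilon$, this is $\lesssim \epsilon(\sup_\tv\|\nabla u(\tv)\|_{L^2}^2 + \|u\|_{LE^1_{\mathbf K}}^2)$ and is absorbed directly --- no Gronwall needed, and crucially no derivative loss. The pairing against the multiplier, rather than against $u$ in a dual norm, is what gains back the derivative that your approach loses.

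\textbf{Minor remarks on part~(b).} The paper applies \eqref{lee} to $\partial_\tv u$ only, then uses the elliptic estimate for the spatial part of $\Box_g$ in the bulk and Lemma~\ref{l:eh} at the horizon, rather than commuting with $\Omega$ and $\partial_r$ directly throughout. Your variant would likely also work, since the red-shift sign and the ellipticity away from the horizon are the same ingredients; the paper's organization is slightly cleaner because the difficult second-order commutator errors near the photon sphere are routed through the $L^1_\tv L^2_x$ estimate rather than the $LE^*_{\mathbf K}$ norm.
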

\begin{proof}
 

On any fixed compact time interval we have uniform energy estimates. 
Eliminating a compact time interval, we can assume without any 
restriction in generality that the integrability condition 
on $c(\tv)$  is strengthened to
 \begin{equation}\label{ceps}
 \int_0^\infty c(\tv) d\tv \lesssim \epsilon, \qquad |c(\tv)| \lesssim \epsilon
  \end{equation} 
where \eqref{aproxmetinfty} was also used.

  a) The proof of \eqref{lee} is similar to the proof of
  Theorem~\ref{LEpert}, but it requires the use of
  pseudodifferential operators. Let us start by recalling the idea
  behind the proof of Theorem~\ref{Kerr}. It is shown in \cite{TT}
  that there exists a pseudodifferential operator $S_1$ of order $1$
  that satisfies the following:
 
  a) $S_1$ is a differential operator in $\tv$ of order $1$.
 
  b) $S_1 = X + \chi_{ps} s^w \chi_{ps}$, where $X$ is defined as in
  \eqref{Xdef} and $s \in S^1$.
  
  c) Let $\M_{[0,\tv_0]} = \{ 0 < \tv < \tv_0,\ r > r_e \}$ and
  $dV_{\mathbf K}=\rho^2 dr d\tv d\omega$ denote the Kerr induced
  measure. Then one has
 \begin{equation}
   \int_{0}^{\tv_0} Q^{\mathbf K}(\tv) d\tv = 
   - \int_{\M_{[0,\tv_0]}} (\Box_{\mathbf K} u) (S_1 u) dV_{\mathbf K}
   - \left. BDR^{\mathbf K}[u]\right|_{\tv = 0}^{\tv = \tv_0} - \left. BDR^{\mathbf K}[u] \right|_{r=r_e}
   \label{intdivk}\end{equation}
 with
 \[
 \int_{0}^{\tv_0} Q^{\mathbf K}(\tv) d\tv \gtrsim \| u\|_{LE^1_{{\mathbf K},w}[0,
   \tv_0]}^2
 \]
\[
 \begin{split}
   \|u\|_{LE_{\mathbf K,w}^1}^2 = & \ \| \chi_{ps}(D_\tv - \tau_2(D,x))c_1 (D,x)\chi_{ps}
   u\|_{L^2}^2 \\ & \ + \| \chi_{ps}(D_\tv - \tau_1(D,x))c_2 (D,x)\chi_{ps} u\|_{L^2}^2 \\
&  \ +
 \|r^{-2}\partial_r u\|_{L^2}^2 + \|r^{-1} u\|_{L^2}^2 +
   \|(1-\chi_{ps}^2) \nabla u\|_{L^2}^2
\end{split}
\]
and the boundary terms satisfying
 \begin{equation}\label{bdrposk}
   \begin{split}
     \left. BDR^{\mathbf K}[u]\right|_{\tv = \tv_i} \approx \| \nabla u(\tv_i)\|_{L^2} \\
     \left. BDR^{\mathbf K}[u]\right|_{r=r_e} \approx \| u\|_{H^1(\Sigma^+_{[0,t_0]})}^2
   \end{split}
 \end{equation}
 Note that conditions a) and b) guarantee that the boundary terms are
 well-defined after integrating by parts.

The same reasoning as in Theorem~\ref{LEpert} leads to the counterpart 
of \eqref{good-S}, namely 
 \begin{equation}\label{good-K}
\begin{split}
& \|  u\|_{LE^1_{\mathbf K}(\M_{[0,\tv_0]})}^2 + \|\nabla u\|_{L^2(\Sigma^+_{[0,\tv_0]})}^2 
+ \|\nabla u(\tv_1)\|_{L^2} ^2
\lesssim \\  
&- \int_{\M_{[0,\tv_0]}} \!\!\!\!\!\!\!
\Box_{\mathbf K} u \cdot  S_1 u \  dV_{\mathbf K}+  \|\nabla u(\tv_0)\|_{L^2} ^2 
 + \|\chi_{>R} \Box_{\mathbf K} u\|_{LE^*(\M_{[0,\tv_0]})}^2.
\end{split}
\end{equation} 
Here we seek to replace the Kerr metric by $g$. 
As discussed in Theorem~\ref{LEpert}, the bound \eqref{highr} holds 
as well for the metric $g$, therefore
the last term is not an issue. Hence it remains to consider the difference
\[
D = \int_{\M_{[0,\tv_0]}} \!\!\!\!\!\!\!
(\Box_{\mathbf K}-\Box_{g}) u \cdot  S_1 u \  dV_{\mathbf K}
\]
 We split $S_1 = X_1+ S_{ps}$ where 
\[
X_1 = X - \chi_{ps} X \chi_{ps}, \qquad S_{ps} = \chi_{ps} X \chi_{ps}+ 
\chi_{ps} s^w \chi_{ps}.
\]
Thus $X_1$ is a first order differential operator which is supported away from 
the photon sphere. Correspondingly we split $D = D_1+ D_{ps}$.
For $D_1$, integration by parts using \eqref{aproxmetinfty} leads to
\[
|D_1| \lesssim \epsilon (\|\nabla u(0)\|_{L^2}^2+\|\nabla u(\tv_0)\|_{L^2}^2 
+ \| u\|_{H^1(\Sigma^+_{[0,\tv_0]})}^2
+ \|u\|_{LE^1_{{\mathbf K}}(\M_{[0,\tv_0]})}^2).
\]
Here it is essential that the outcome of the integration by parts is supported 
away from the photon sphere $\{r = 3M\}$, where the 
$LE^1_{{\mathbf K}}$ and $LE^1$
norms are equivalent.

To estimate $D_{ps}$ we need to use the stronger bound \eqref{aproxmetcpt}.
Then near the photon sphere we can write 
\[
\Box_{\mathbf K} - \Box_g = \frac{1}{\sqrt{ |g_{\mathbf K}|}}\partial_{\mu} \sqrt{|g_{\mathbf K}|}
(g_{\mathbf K}^{\mu\nu}- g^{\mu\nu}) \partial_{\nu} + O(c(\tv)) \nabla 
\]
where we also have $g_{\mathbf K}^{\mu\nu}- g^{\mu\nu}=  O(c(\tv))$ and $\nabla (g_{\mathbf K}^{\mu\nu}- g^{\mu\nu})=  O(c(\tv))$.
Thus integrating by parts we obtain
\[
|D_{ps}| \lesssim \int_{\M_{[0,\tv_0]}} \tilde \chi_{ps} c(\tv)  (|\nabla u|^2 + |u|^2) dV_{\mathbf K} 
+ c(0) \|\nabla u(0)\|_{L^2} ^2 + c(\tv_0) \|\nabla u(\tv_0)\|_{L^2} ^2.
\]

By \eqref{ceps} it follows that 
\[
|D_{ps}| \lesssim \epsilon \Bigl( \sup_{\tv \in [0,\tv_0]} \| \nabla u(\tv)\|_{L^2}^2 + \|  u\|_{LE^1_{\mathbf K}(\M_{[0,\tv_0]})}^2 \Bigr).
\]

Applying the bounds for $D_1$ and $D_{ps}$, we complete the replacement of 
$\Box_{\mathbf K}$ by $\Box_g$ in \eqref{good-K}, obtaining 
\begin{equation}
\label{good-K1}
\begin{split}
& \|  u\|_{LE^1_{\mathbf K}(\M_{[0,\tv_0]})}^2 + \|\nabla u\|_{L^2(\Sigma^+_{[0,\tv_0]})}^2 
+ \|\nabla u(\tv_0)\|_{L^2} ^2
\lesssim - \int_{\M_{[0,\tv_0]}} \!\!\!\!\!\!\!
\Box_{g} u \cdot  S_1 u \  dV_{\mathbf K} \\  
&+  \|\nabla u(0)\|_{L^2} ^2 
 + \|\chi_{>R} \Box_{g} u\|_{LE^*(\M_{[0,\tv_0]})}^2 
+ \epsilon \sup_{\tv \in [0,\tv_0]} \| \nabla u(\tv)\|_{L^2}^2
\end{split}
\end{equation}
To conclude the proof of \eqref{lee} we estimate the integral term in
the last inequality by $ \| u\|_{LE^1_{\mathbf
    K}(\M_{[0,\tv_0]})}\|\Box_{g} u\|_{LE^*_{\mathbf K}}$ and use
Cauchy-Schwarz.  The last term on the right is eliminated by taking
the suppremum in the resulting estimate over $\tv_0 \in [0,\tv_1]$
for arbitrary $\tv_1 > 0$.

b) We prove the estimate \eqref{leek} for $k=1$; the argument for larger
$k$ is identical. We begin by applying the estimate \eqref{lee} 
to $\partial_\tv u$.  Commuting $\Box_g$ with $\partial_\tv$ we have
\[
\Box_g \partial_\tv u = \partial_\tv \Box_g u +
O(\epsilon r^{-1-})[(1-\chi_{ps}) Q_2 u + \chi_{ps} Q_1 u]
+ O(c(\tv)) \chi_{ps} Q_2 u
\]
where $Q_1$ and $Q_2$ stand for second order operators with bounded
coefficients.  Here we have used \eqref{aproxmetinfty} for first
derivatives of $g$ away from the photon sphere, \eqref{aproxmetcpt}
for first derivatives of $g$ near the photon sphere, and
\eqref{aproxmetinfty2} for second order derivatives of $g$.
We estimate the second term in $\Box_g \partial_\tv u$ in $LE_{\mathbf K}^*$ 
and the third in $L^1_\tv L^2_x$. This gives
\[
\| \partial_\tv u\|_{LE_{\mathbf K}^1} \lesssim \epsilon \|u\|_{LE_{\mathbf K}^{1,1}} + \|u\|_{LE_{\mathbf K}^{1}}+
\| \Box_g u\|_{LE_{\mathbf K}^{*,1}}.
\]
Away from the event horizon the vector field $\partial_\tv$ is timelike,
therefore, arguing as in the proof of Theorem~\ref{LEpert}(b), we can
use an elliptic estimate to conclude that 
\[
\| \chi_{out} u\|_{LE_{\mathbf K}^{1,1}} \lesssim \epsilon \|u\|_{LE_{\mathbf K}^{1,1}}+ \|u\|_{LE_{\mathbf K}^{1}} +
\| \Box_g u\|_{LE_{\mathbf K}^{*,1}}.
\]
On the other hand, near the event horizon we use Lemma~\ref{l:eh}
to obtain
\[
\| \chi_{eh} u\|_{H^2} \lesssim  \|\chi_{out} u\|_{H^2} +
\| \chi_{eh} \Box_g u\|_{H^1} + \| \nabla u(0)\|_{H^1}
\]
Combining the last three estimates we obtain \eqref{leek} for $k=1$.
\end{proof}

\subsection{Conclusion}
We can now prove Price's law for certain perturbations of the Kerr
spacetimes:

\begin{theorem}\label{Pricepert}
  Let $g$ be a Lorentzian metric close to $g_{\mathbf K}$ in the sense
  that it satisfies \eqref{aproxmetinfty}, \eqref{aproxmetinfty2} and
  \eqref{aproxmetcpt}. Let $u$ solve \eqref{box} with smooth,
  compactly supported initial data and $V=0$. Then
  \eqref{pointwiseest1} holds.
\end{theorem}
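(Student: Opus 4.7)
The plan is to verify the hypotheses of Theorem~\ref{main} for $g$ (after reduction to normalized coordinates) and then directly invoke that theorem. First I would check that $g$ fits the Case B framework on $\M$. Condition (i) that the level sets $\tv = \mathrm{const}$ are spacelike follows from the corresponding property for $g_{\mathbf K}$ together with the $C^1$ smallness in \eqref{aproxmetinfty}; condition (ii) on asymptotic flatness is obtained by taking as $g_{lr}$ the radial long range component of $g_{\mathbf K}$ and absorbing the short range part of $g_{\mathbf K}$ together with $g - g_{\mathbf K}$ into $g_{sr}$, noting that the combination of \eqref{aproxmetinfty} and \eqref{aproxmetinfty2} provides the required $S^Z$ bounds for the perturbation; condition (iii) on the outgoing spacelike character of $\{r = r_e\}$ is stable under small perturbations of $g_{\mathbf K}$. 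Finite speed of propagation, together with the smooth compactly supported initial data, guarantees that $u$ is supported in a forward cone $\{\tv \geq r - R_1\}$, matching the support hypothesis of Theorem~\ref{main}.

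Second, I would apply the coordinate change and conformal transformation of Section~1.2 to put $g$ into normal form. Both operations depend only on the radial long range component of $g$ and hence leave the perturbation hypotheses essentially intact (up to innocuous changes of constants). The resulting wave operator has the form $P = \Box + Q$ with $Q$ as in \eqref{can-p-sr}; since the original $V = 0$, the potential in $Q$ arises solely from the conformal factor and lies in $S^Z(r^{-3})$.

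Third, I would invoke Theorem~\ref{LEpert}(b), whose hypotheses \eqref{aproxmetinfty} and \eqref{aproxmetinfty2} hold, to obtain the stationary local energy bound \eqref{sle} for all $k \geq 0$, and Theorem~\ref{LEpertw}(b), whose additional hypothesis \eqref{aproxmetcpt} also holds, to obtain the weak local energy decay \eqref{lew} for all $k \geq 0$. Both hypotheses of Theorem~\ref{main} are then met, and applying that theorem to $u$ (with $f = 0$) yields \eqref{pointwiseest1} with
\[
\kappa \lesssim \|\nabla u(0)\|_{H^m}
\]
for $m$ sufficiently large, which is finite since the data are smooth and compactly supported.

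The main obstacle is the bookkeeping around the reduction to normalized coordinates: one must verify that \eqref{aproxmetinfty}, \eqref{aproxmetinfty2} and \eqref{aproxmetcpt} survive (with comparable constants) both the Regge--Wheeler-type change of coordinates and the conformal transformation, so that Theorems~\ref{LEpert} and \ref{LEpertw} can be applied in the new frame. Once this technical step is granted, the theorem is an immediate corollary of the three preceding results. The substantive analytic content has already been carried out upstream: the Morawetz multiplier argument with a positive Lagrangian correction near the photon sphere for stationary local energy decay, and the pseudodifferential multiplier $S_1$ combined with the red shift estimate of Lemma~\ref{l:eh} for the weak local energy decay.
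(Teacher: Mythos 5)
Your proposal is correct and follows exactly the route the paper takes: the paper's entire proof of Theorem~\ref{Pricepert} is the single sentence that it is ``an obvious consequence of Theorems~\ref{main}, \ref{LEpert} and \ref{LEpertw},'' and your argument simply spells out the bookkeeping (Case B verification, passage to normal form, appeal to the two local energy theorems, then the main theorem with $f=0$) that the authors leave implicit. The one point you flag as a potential obstacle — that the perturbation hypotheses \eqref{aproxmetinfty}, \eqref{aproxmetinfty2}, \eqref{aproxmetcpt} must survive the Regge--Wheeler change of variables and the conformal rescaling — is indeed glossed over in the paper, and your observation that these operations depend only on the radial long-range part of the metric is the right reason it works.
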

\begin{proof}
  This is an obvious consequence of Theorems \ref{main},
  \ref{LEpert} and \ref{LEpertw}.
\end{proof}


\bibliography{Kerrpert}
\bibliographystyle{plain}

\end{document}